\date{} 
\title{Gaussian free field light cones and $\SLE_\kappa(\rho)$}
\author{Jason Miller and Scott Sheffield}
\documentclass[12pt,naturalnames]{article}
\usepackage{amsmath}
\usepackage{amssymb}
\usepackage{amsthm}
\usepackage{amsfonts}
\usepackage{graphicx}
\usepackage{tabularx}
\usepackage{subfigure}
\usepackage{enumerate}
\usepackage{color}
\usepackage{xparse}
\usepackage{comment}
\usepackage[normalem]{ulem}
\usepackage{microtype}

\usepackage{booktabs}
\newcommand{\head}[1]{\textnormal{\textbf{#1}}}

\def\@rst #1 #2other{#1}
\newcommand\MR[1]{\relax\ifhmode\unskip\spacefactor3000 \space\fi
  \MRhref{\expandafter\@rst #1 other}{#1}}
\newcommand{\MRhref}[2]{\href{http://www.ams.org/mathscinet-getitem?mr=#1}{MR#2}}

\usepackage[margin=1.2in]{geometry}

\parindent 0 pt

\setlength{\parskip}{0.25cm plus1mm minus1mm}

\allowdisplaybreaks

\newif\ifhyper\IfFileExists{hyperref.sty}{\hypertrue}{\hyperfalse}

\ifhyper\usepackage{hyperref}\fi

\newif\ifdraft
\drafttrue
\numberwithin{equation}{section}
\numberwithin{figure}{section}

\newtheorem{theorem}{Theorem}
\numberwithin{theorem}{section}
\newtheorem{corollary}[theorem]{Corollary}
\newtheorem{lemma}[theorem]{Lemma}
\newtheorem{proposition}[theorem]{Proposition}

\theoremstyle{remark}
\theoremstyle{remark}\newtheorem{remark}[theorem]{Remark}

\newcommand{\R}{\mathbf{R}}
\newcommand{\C}{\mathbf{C}}
\newcommand{\D}{\mathbf{D}}

\newcommand{\N}{\mathbf{N}}
\newcommand{\Q}{\mathbf{Q}}

\newcommand{\h}{\mathbf{H}}

\newcommand{\CF}{\mathcal {F}}
\newcommand{\CI}{\mathcal {I}}
\newcommand{\CJ}{\mathcal {J}}

\newcommand{\CP}{\mathcal {P}}

\newcommand{\CR}{\mathcal {R}}

\def\diam{\mathop{\mathrm{diam}}}

\def\dist{\mathop{\mathrm{dist}}}

\newcommand{\SLE}{{\rm SLE}}

\newcommand{\ol}{\overline}
\newcommand{\ul}{\underline}
\newcommand{\wh}{\widehat}
\newcommand{\wt}{\widetilde}

\newcommand{\dimH}{\mathrm{dim}_{\mathcal H}}

\newcommand{\PV}{{\rm P.V.}}

\newcommand{\BESQ}{{\mathrm {BESQ}}}
\newcommand{\BES}{{\mathrm {BES}}}

\definecolor{purple}{rgb}{0.7,0,0.7}
\definecolor{gray}{rgb}{0.6,0.6,0.6}
\definecolor{dgreen}{rgb}{0.0,0.4,0.0}
\definecolor{dblue}{rgb}{0.0,0.0,0.5}


\newcommand{\lightcone}{{\mathbf L}}
\newcommand{\openL}{x}
\newcommand{\closeL}{y}
\newcommand{\open}[1]{\openL(#1)}
\newcommand{\close}[1]{\closeL(#1)}

\NewDocumentCommand{\pocket}{m+g}{
\IfNoValueTF{#2}
 {P(#1)}
 {P_{#1}(#2)}
}

\NewDocumentCommand{\side}{m+g}{
\IfNoValueTF{#2}
 {S_{#1}}
 {S_{#1}(#2)}
}

\NewDocumentCommand{\sideflow}{m+g}{
\IfNoValueTF{#2}
 {\eta_{#1}}
 {\eta_{#1}(#2)}
}


\setcounter{tocdepth}{2}

\def\Ito/{It\^o}

\begin{document} 
\maketitle

\begin{abstract}
We derive a surprising correspondence between $\SLE_{\kappa}(\rho)$ processes and {\em light cones} associated to the Gaussian free field (GFF).

Recall that (one-sided, chordal, origin-seeded) $\SLE_\kappa(\rho)$ processes are in some sense the simplest and most natural variants of the Schramm-Loewner evolution.  They were originally defined only for $\rho > -2$, but one can use L\'evy compensation to extend the definition to any $\rho > -2-\tfrac{\kappa}{2}$ and to obtain qualitatively different curves. The triangle $\mathcal T = \{(\kappa, \rho): (-2-\tfrac{\kappa}{2})\vee (\tfrac{\kappa}{2}-4) < \rho < -2 \}$ is the primary focus of this paper. When $(\kappa, \rho) \in \mathcal T$, the $\SLE_\kappa(\rho)$ curves are highly non-simple (and double points are dense) even though $\kappa < 4$.

Let $h$ be an instance of the GFF. Fix $\kappa\in (0,4)$ and $\chi = 2/\sqrt{\kappa} - \sqrt{\kappa}/2$. Recall that an {\em imaginary geometry ray} is a flow line of $e^{i(h/\chi +\theta)}$ that looks locally like $\SLE_\kappa$. The {\em light cone} with parameter $\theta \in [0, \pi]$ is the set of points reachable from the origin by a sequence of rays with angles in $[-\theta/2, \theta/2]$.  It is known that when $\theta=0$, the light cone looks like $\SLE_\kappa$, and when $\theta = \pi$ it looks like the range of an $\SLE_{16/\kappa}$ {\em counterflow line}. We find that when $\theta \in (0, \pi)$ the light cones are either fractal carpets with a dense set of holes or space-filling regions with no holes.

We show that every non-space-filling light cone (with $\theta \in (0,\pi)$ and $\kappa \in (0,4)$) agrees in law with the range of an $\SLE_\kappa(\rho)$ process with $(\kappa, \rho) \in \mathcal T$. Conversely, the range of any $\SLE_\kappa(\rho)$ with $(\kappa,\rho) \in {\mathcal T}$ agrees in law with a non-space-filling light cone. As a consequence of our analysis, we obtain the first proof that these $\SLE_\kappa(\rho)$ processes are a.s.\ continuous curves and show that they can be constructed as natural path-valued functions of the GFF.
\end{abstract}

\newpage
\tableofcontents
\newpage

\medbreak {\noindent\bf Acknowledgements.}  JM and SS were respectively partially supported by NSF grants DMS-1204894 and DMS-1209044.  We also thank Wendelin Werner for helpful discussions.

\section{Introduction}

The $\SLE_\kappa(\rho)$ processes are an important variant of the Schramm-Loewner evolution ($\SLE$) \cite{S0}.  They were first introduced by Lawler, Schramm, and Werner in \cite[Section~8.3]{LSW_RESTRICTION}.  Like ordinary $\SLE_\kappa$, $\SLE_\kappa(\rho)$ is defined using the Loewner equation and a {\em driving function}~$W$ that looks (at least locally) like~$\sqrt{\kappa}$ times a Brownian motion. However, in addition to the driving function~$W$, one keeps track of a so-called {\em force point} process~$V$, which itself evolves according to Loewner evolution, and which exerts a {\em drift} on~$W$ proportional to $\rho/(W-V)$.  When $\rho > 0$ (resp.\ $\rho < 0$), the drift pushes~$W$ away from (resp.\ towards) the force point~$V$, and the case $\rho = 0$ corresponds to ordinary $\SLE_\kappa$. The difference $W - V$ evolves as a positive multiple of a Bessel process of dimension $\delta(\kappa,\rho) = 1+\tfrac{2(\rho+2)}{\kappa}$.  See Section~\ref{sec::preliminaries} for a formal definition of $\SLE_\kappa(\rho)$. Various flavors of $\SLE_\kappa(\rho)$ have been discussed in the literature, but in this paper we generally assume that the processes are {\em chordal} (so they grow from~$0$ to~$\infty$ in the upper half plane~$\h$), {\em one-sided} (so that all excursions of $W-V$ away from zero have the same sign) and {\em origin seeded} (meaning that $V_0 = W_0 = 0$).

The time evolution of~$W$ and~$V$ is straightforward to define during intervals of time in which $W_t \neq V_t$, but to continue the evolution after~$W$ and~$V$ collide, one has to work out precisely how these processes ``bounce off'' one another. In the original construction in \cite[Section~8.3]{LSW_RESTRICTION}, and in most of the later work on $\SLE_\kappa(\rho)$ processes, this is only done for $\rho > -2$.  The threshold~$-2$ corresponds to $\delta(\kappa,\rho)=1$, which is the critical threshold below which Bessel processes fail to be semimartingales \cite[Chapter~11]{RY04}. This is related to the fact that $\delta > 1$ is necessary in order for the integral $\int_0^T (W_t - V_t)^{-1} dt$ to be a.s.\ finite for all $T$, which in turn ensures that the cumulative amount of drift exerted on $W$ (up to any finite time) is a.s.\ finite.

To define $\SLE_{\kappa}(\rho)$ when $\rho < -2$ it is necessary to introduce a local time L\'evy compensation to keep the accumulated drift from sending $W$ off to $\infty$ in finite time. As we recall in Section~\ref{sec::preliminaries} (citing~\cite[Section 3.2]{SHE_CLE}), there is a natural scale-invariant way to do this if and only if $\rho > -2 - \tfrac{\kappa}{2}$ so that $\delta > 0$. As detailed in~\cite[Section 3]{SHE_CLE}, if one parameterizes $W$ by the local time associated to $\{t: W_t = V_t \}$ one obtains a skew stable L\'evy process, so that the classification of general $\SLE_{\kappa}(\rho)$ processes is closely related to the classification of skew stable L\'evy processes.\footnote{In the account in \cite{SHE_CLE}, there is a parameter $\beta$ such that each $W-V$ excursion away from zero is (independently of all others) assigned a positive sign with probability $(1+\beta)/2$ and a negative sign otherwise. When $\rho = -2$, it is necessary to take $\beta =0$ to obtain a canonical, scale-invariant and non-trivial process, and there is an additional free parameter $\mu$ in that case.  We will not consider the $\rho = -2$ setting here, except to say that in some limiting sense $\beta = 1$ and $\rho = -2$ corresponds to a trivial boundary tracing path.  As mentioned above,  this paper treats only the ``one-sided'' case $\beta = 1$, and our main results assume $\rho < -2$.}

The continuity and reversibility properties of $\SLE_\kappa(\rho)$ with $\rho > -2$ are established in \cite{MS_IMAG,MS_IMAG2,MS_IMAG3,MS_IMAG4}, which exhibit and make use of explicit couplings between these processes and the Gaussian free field (GFF) \cite{She_SLE_lectures,DUB_PART,SchrammShe10,MS_IMAG,MS_IMAG4} (see also \cite{Z_R_KAPPA_RHO,DUB_DUAL} for the reversibility of $\SLE_\kappa(\rho)$ for $\kappa \in (0,4)$ and $\rho \geq \tfrac{\kappa}{2}-2$).  When $\rho > -2$, the range of an $\SLE_\kappa(\rho)$ process looks locally like the range of an ordinary $\SLE_\kappa$, except where the path hits the boundary.

When $\rho \leq -2$, however, one obtains interesting and qualitatively different processes. The Bessel dimension interval $\delta \in (0,1)$ corresponds to $\rho \in (-2 - \tfrac{\kappa}{2}, -2)$.  In this article we focus on the set $\mathcal T = \{(\kappa, \rho): (-2-\tfrac{\kappa}{2})\vee (\tfrac{\kappa}{2}-4) < \rho < -2 \}$, which corresponds to the yellow {\em light cone} region depicted in Figure~\ref{fig::rho_kappa_chart}. The {\em loops on trunk} regions shown in Figure~\ref{fig::rho_kappa_chart}. are studied in detail in \cite{cle_percolations}.\footnote{In the loops-on-trunk regime explored in \cite{cle_percolations}, each excursion of $W-V$ away from zero describes a loop, and it is important and relevant to consider {\em non-one-sided} $\SLE_\kappa(\rho)$, which can be written $\SLE_\kappa^\beta(\rho)$ for $\beta \in [-1,1]$, and which correspond to different types of CLE explorations. These explorations are useful for understanding CLE percolation and the continuum FK correspondence, among other things. In general,  $\SLE_\kappa^\beta(\rho)$ can be defined for all $\beta \in [-1,1]$ whenever $\rho \in (-2-\kappa/2, \kappa/2-2) \setminus \{-2\}$, so that $\delta \in (0,2) \setminus \{1 \}$, and \cite[Section~10.1.3]{cle_percolations} briefly describes how to interpret and prove continuity results for these processes for general $\beta$ in the case $\kappa>4$.
When $\kappa \leq 4$, it remains an open problem to prove continuity for $\SLE_{\kappa}^\beta(\rho)$ when $\beta \in (-1,1)$ and $\rho \in (-2-\kappa/2 \vee \kappa/2-4, \kappa/2-2) \setminus \{-2 \}$, i.e., in the light cone region and (the boundary-intersecting part of) the ordinary flow line region in Figure~\ref{fig::rho_kappa_chart}. We remark that in these regions, each excursion of $W-V$ away from zero should (assuming continuity of the overall path) describe a {\em chord} (i.e., a simple path segment starting and ending at different points) and we are not aware of a natural interpretation of an overall path that alternates between left and right going chords. As mentioned earlier, we treat only the case $\beta = 1$ in this paper. (The case $\beta = -1$ is equivalent by symmetry.)
}
We will find that $\SLE_\kappa(\rho)$ with $(\kappa, \rho) \in \mathcal T$ can be naturally coupled with an instance of the GFF, and that in this coupling the field a.s.\ determines the path.  This will be accomplished by showing that such a process can be realized as an {\bf ordered light cone} of angle-varying flow lines of the (formal) vector-field $e^{i h / \chi}$,
\begin{equation}
\label{eqn::chi}
   \chi := \frac{2}{\sqrt{\kappa}} - \frac{\sqrt{\kappa}}{2},
\end{equation}
where $h$ is a GFF.  We remark that for $\kappa' > 4$, we have $\tfrac{\kappa'}{2}-4 > -2$ so $\SLE_{\kappa'}(\rho)$ with this range of $\rho$ values falls under the scope of \cite{MS_IMAG,MS_IMAG2,MS_IMAG3,MS_IMAG4}.  At $\rho = \tfrac{\kappa}{2}-4$, $\SLE_\kappa(\rho)$ for $\kappa \in (0,4)$ has a phase transition from the light cone regime described in this article to the loop-making/trunk regime studied by the authors together with Werner in \cite{cle_percolations}.   (In fact, as we will explain here and have also mentioned in \cite{cle_percolations}, the law of the range of an $\SLE_\kappa(\tfrac{\kappa}{2}-4)$ process is the same as the law of the range of an $\SLE_{\kappa'}(\tfrac{\kappa'}{2}-4)$ process, where $\kappa \in (0,4)$ and $\kappa' = 16/\kappa > 4$.)

See  Table~\ref{tab::rho_values} and Figure~\ref{fig::rho_kappa_chart} for a summary of the phases of $\SLE_\kappa(\rho)$.

Our first main result concerns continuity and transience.

\begin{theorem}
\label{thm::continuous}
The $\SLE_\kappa(\rho)$ processes for $\kappa \in (0,4)$, $\rho \in [\tfrac{\kappa}{2}-4,-2)$, and $\rho > -2-\tfrac{\kappa}{2}$ are almost surely continuous and transient.  That is, if $D \subseteq \C$ is a Jordan domain, $x,y \in \partial D$ are distinct, and $\eta \colon [0,\infty) \to D$ is an $\SLE_\kappa(\rho)$ in $D$ from $x$ to $y$ then $\eta$ is almost surely continuous and $\lim_{t \to \infty} \eta(t) = y$ almost surely.
\end{theorem}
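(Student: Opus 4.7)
The plan is to derive the theorem from the imaginary-geometry coupling between $\SLE_\kappa(\rho)$ and the Gaussian free field that the introduction previews. The cases $\rho > -2$ are already covered by \cite{MS_IMAG,MS_IMAG2,MS_IMAG3,MS_IMAG4}, and the boundary value $\rho = \kappa/2-4$ with $\kappa \in (0,4)$ reduces to a known case via the identification noted in the introduction: the range of $\SLE_\kappa(\kappa/2-4)$ agrees in law with that of $\SLE_{\kappa'}(\kappa'/2-4)$ where $\kappa' = 16/\kappa > 4$, which falls under previous imaginary-geometry results. So the only genuinely new content is $(\kappa,\rho) \in \mathcal T$.

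For $(\kappa,\rho) \in \mathcal T$, the strategy is to build a candidate path directly from a GFF instance $h$ on $\H$ (with the boundary data dictated by the desired coupling) and then verify that (a) it is almost surely a continuous curve and (b) its law is that of $\SLE_\kappa(\rho)$. Fix $\theta = \theta(\kappa,\rho) \in (0,\pi)$, the light-cone angle paired with $\rho$. Approximate by a sequence of angle-varying flow lines $\eta_n$ from $0$, each a concatenation of finitely many flow-line segments of $e^{i(h/\chi+\vartheta)}$ with angles $\vartheta$ drawn from a level-$n$ dyadic discretization of $[-\theta/2, \theta/2]$, ordered in a canonical ``leftmost-first'' fashion so as to traverse a level-$n$ skeleton of the light cone $\lightcone(\theta)$. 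Each $\eta_n$ is a continuous curve because each flow-line segment is so by the earlier imaginary-geometry work, and the concatenation rule respects continuity at each branch point.

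To upgrade $\{\eta_n\}$ to a continuous limit requires a Cauchy-in-probability estimate: the diameter of the extra pocket refined between $\eta_n$ and $\eta_{n+1}$ must decay fast enough, uniformly along the traversal. I would obtain this by combining a Bessel-type bound for the distance between the tip and the next branch point (which in the subcritical regime $\delta(\kappa,\rho) \in (0,1)$ translates into a scaling estimate for the skew-stable L\'evy process description recalled from \cite[Section~3]{SHE_CLE}) with a pocket-diameter bound that follows from the local $\SLE_\kappa$ law of each flow-line segment. The limit $\eta$ is then identified with $\SLE_\kappa(\rho)$ by computing its Loewner driving function $W$: during excursions between boundary hits, $\eta$ is absolutely continuous with respect to $\SLE_\kappa$ with the correct $\rho/(W-V)$ drift; at boundary hits, the L\'evy compensation enforced by scale invariance reproduces exactly the skew-stable description of $W - V$. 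Transience then follows from a.s.\ continuity together with scale invariance: a Blumenthal-type argument near $\infty$ shows $\eta(t) \to \infty$ almost surely.

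The main obstacle will be this pocket/Cauchy estimate, together with the driving-function identification. In $\mathcal T$ the light cone has dense double points and can be a fractal carpet, while at boundary hits $W-V$ behaves as a sub-critical Bessel process for which the standard semimartingale calculus is unavailable. Showing that the leftmost-first traversal is globally well-defined at every scale, that pockets shrink rapidly enough to yield uniform convergence of $\eta_n$ on compact intervals, and that the L\'evy compensation emerges correctly in the limit will constitute the technical heart of the proof.
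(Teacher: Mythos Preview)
Your proposal has the right high-level shape---build a continuous path from GFF flow lines, then identify its law---but it is missing the key structural idea that makes the paper's argument work, and the ``leftmost-first'' traversal you describe is not well-posed as stated.

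The paper does not build $\eta$ as a Cauchy limit of finite angle-varying concatenations. Instead, it uses the counterflow line $\eta'$ (an $\SLE_{\kappa'}$ process, $\kappa'=16/\kappa$) of the same field $h$ to \emph{order} the pockets of the light cone: for each pocket $P$, $\eta'$ enters $P$ at a well-defined opening point $\open{P}$, and the paper defines $\eta$ by concatenating the $\theta_1$-angle boundary segments $\side{1}{P}$ in the order in which $\eta'$ hits the points $\open{P}$. Continuity of $\eta$ then comes essentially for free from the already-known continuity of $\eta'$ (and of space-filling $\SLE_{\kappa'}$) together with local finiteness of the pockets (Lemma~\ref{lem::locally_finite}); no quantitative Cauchy or pocket-diameter estimate is needed. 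Locality and the conditional boundary data are read off via the local-set machinery, and the law is pinned down by the martingale characterization of $\SLE_\kappa(\rho)$ from \cite[Theorem~2.4]{MS_IMAG} combined with a uniqueness result for the Bessel principal-value integral (Proposition~\ref{prop::bessel_pv}), rather than by a direct skew-stable identification.

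Your dyadic scheme faces two genuine obstructions. First, a light cone with dense double points has no obvious canonical traversal at finite resolution: flow-line segments with different angles cross and merge in complicated ways, and ``leftmost-first'' does not induce a total order on them without something playing the role of the counterflow-line ordering. Second, the Cauchy estimate you outline---uniform control, along the entire traversal, on the diameter of pockets refined between levels $n$ and $n+1$---is precisely the global continuity statement you are trying to prove; the paper sidesteps it by importing continuity from $\eta'$. Finally, the boundary value $\rho=\tfrac{\kappa}{2}-4$ is handled inside the paper's light-cone argument (it is included in the range $[\tfrac{\kappa}{2}-4,-2)$), not by a separate reduction to the $\SLE_{\kappa'}(\tfrac{\kappa'}{2}-4)$ case.
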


The continuity of ordinary $\SLE$ was first proved by Rohde and Schramm in \cite{RS05}.  The main idea is to estimate the moments of the derivative of the reverse Loewner flow evaluated near the inverse image of the tip of the path.  By the Girsanov theorem, during a time interval in which $V_t \not = W_t$, the evolution of an $\SLE_\kappa(\rho)$ is absolutely continuous with respect to the evolution of ordinary $\SLE_\kappa$.  Consequently, the almost sure continuity of the process during such intervals of time can be easily derived from \cite{RS05}. From this one can see immediately that  $\SLE_\kappa(\rho)$ is a.s.\ continuous when $\rho \geq \tfrac{\kappa}{2}-2$ so that $\delta \geq 2$.  A more general statement is \cite[Theorem~1.3]{MS_IMAG}, which states that $\SLE_{\kappa}(\rho)$ is a.s.\ continuous for all  $\kappa$ and all $\rho > -2$. The idea of that proof is to extract the continuity from the non-boundary-intersecting case and a conditioning trick which involves multiple $\SLE$ paths coupled together using the GFF.  Theorem~\ref{thm::continuous} extends this further to the case that $\rho \geq \tfrac{\kappa}{2}-4$ and $\rho > -2-\tfrac{\kappa}{2}$.  Its proof is also based on GFF arguments, though the method is rather different than that of \cite[Theorem~1.3]{MS_IMAG}.  Continuity in the case that $\rho \in (-2-\tfrac{\kappa}{2},\tfrac{\kappa}{2}-4]$ was established in \cite{cle_percolations}, also using GFF based arguments.  Combining these works, we have $\SLE_\kappa(\rho)$ continuity for all of the regions shown in Figure~\ref{fig::rho_kappa_chart}.

\begin{table}
{\footnotesize
\begin{center}
\begin{tabular}{llllcc}
\toprule
\head{$\rho$} & \head{$\delta(\kappa,\rho)$}  & \head{$\dimH(\text{Range})$} & \head{Process type} & \head{Simple} & \head{Rev.}\\
\toprule

$(-\infty,-2-\tfrac{\kappa}{2}]$ & $(-\infty,0]$ & --- & Not defined & --- & ---\\

$(-2-\tfrac{\kappa}{2},\tfrac{\kappa}{2}-4]$ & $(0,2-\tfrac{4}{\kappa}] $ & $1+\tfrac{2}{\kappa}$ & Trunk plus loops & $\text{\sffamily X}$ & $\text{\sffamily X}$\\

$(\tfrac{\kappa}{2}-4,-2)$ & $(2-\tfrac{4}{\kappa},2)$ & $\tfrac{(\kappa-2(2+\rho))(\kappa+2(6+\rho))}{8\kappa}$ & Light cone & $\text{\sffamily X}$ & $\text{\sffamily X}$\\

$-2$ & $1$ & $1$ & $\partial$ tracing & $\checkmark$ & $\checkmark$\\

$(-2,\tfrac{\kappa}{2}-2)$ & $(1,2)$ & $1+\tfrac{\kappa}{8}$ & $\partial$ hitting & $\checkmark$ & $\checkmark$\\

$ [\tfrac{\kappa}{2}-2,\infty)$ & $[2,\infty)$ & $1+\tfrac{\kappa}{8}$ & $\partial$ avoiding & $\checkmark$ & $\checkmark$\\

\bottomrule
\end{tabular}
\end{center}}
\medskip
\caption{\label{tab::rho_values} Phases of~$\rho$ values and corresponding $\delta(\kappa,\rho)$ (driving Bessel process dimension) values for $\SLE_\kappa(\rho)$ processes with a single boundary force point of weight~$\rho$, assuming $\kappa \in (2,4)$. When $\kappa \in (0, 2]$, the phases are the same except that the second and third are replaced by a single ``light cone'' phase with $\rho \in (-2-\tfrac{\kappa}{2},-2)$ and $\delta \in (0,1)$. The symbol ``$\partial$'' should be translated as ``boundary'' and ``rev.'' stands for ``reversible.''  The statements in the reversible column are only applicable when the force point is located immediately to the left or to the right of the seed of the process.}
\end{table}

\begin{figure}[ht!]
\begin{center}
\includegraphics[scale=0.85]{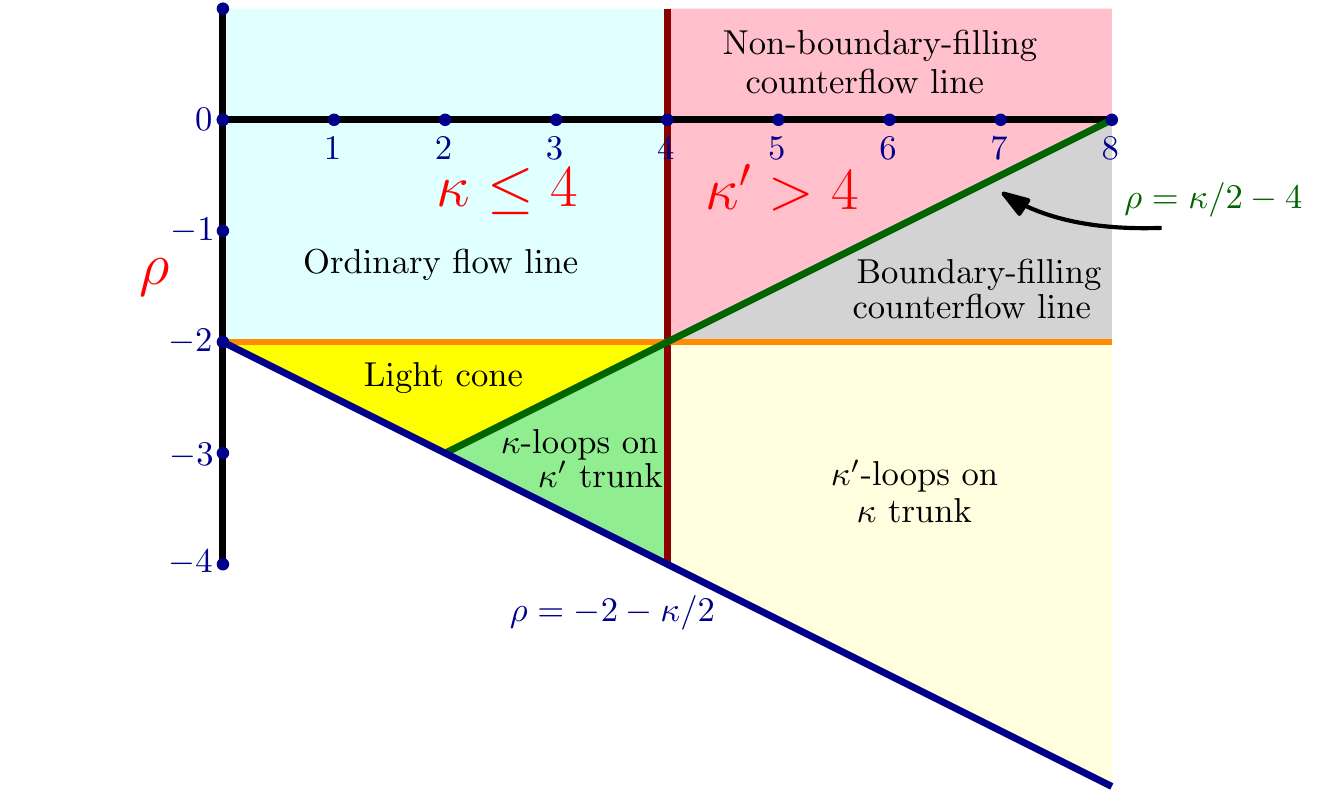}	
\end{center}
\caption{\label{fig::rho_kappa_chart} Phase diagram for the behavior of $\SLE_\kappa(\rho)$ for $\rho$ above the minimal value $-2-\tfrac{\kappa}{2}$ for which such a process is defined.  The present paper is focused on the light cone regime (yellow triangle) where $\kappa \in (0,4)$ and $\rho \in ( (-2-\tfrac{\kappa}{2}) \vee (\tfrac{\kappa}{2}-4),-2)$.  The other two $\rho < -2$ regimes are studied in \cite{cle_percolations} and the $\rho > -2$ cases are treated in \cite{MS_IMAG,MS_IMAG4}.}  
\end{figure}

Suppose that $D \subseteq \C$ is a Jordan domain, $x \in \partial D$, and $h$ is a GFF on~$D$ with given boundary conditions.  Fix angles $\theta_1 \leq \theta_2 \leq \theta_1 + \pi$.  The {\bf $\SLE_\kappa$ light cone} $\lightcone_x(\theta_1,\theta_2)$ of~$h$ starting from $x$ with angle range $[\theta_1,\theta_2]$ is a random set in~$D$ generated from the flow lines of $e^{i h / \chi}$ (hereafter, we will refer to these simply as ``flow lines of $h$'').  It is explicitly given by the closure of the set of points accessible by the flow lines of $h$ starting from $x$ with angles which are either rational and contained in $[\theta_1,\theta_2]$ or equal to~$\theta_1$ or~$\theta_2$ and which change angles a finite number of times and only at positive rational times.  These objects were first introduced in \cite{MS_IMAG}.  We call $\theta_2-\theta_1$ the {\bf opening angle} of $\lightcone_x(\theta_1,\theta_2)$.  For $\theta \in [0,\pi]$, we let $\lightcone_x(\theta) = \lightcone_x(-\tfrac{\theta}{2},\tfrac{\theta}{2})$.  It is shown in \cite[Theorem~1.4]{MS_IMAG} that a light cone with opening angle $\pi$ starting from $x$ is equal to the range of a form of $\SLE_{16/\kappa}$, which is called a {\em counterflow line} targeted at $x$.  More generally, if $A$ is a segment of $\partial D$, we let $\lightcone_A(\theta_1,\theta_2)$ be the set points accessible by flow lines of $h$ starting from a countable dense subset of $A$ with angles which are either rational and contained in $[\theta_1,\theta_2]$ or equal to $\theta_1$ or $\theta_2$ which change angles only a finite number of times and only at positive rational times.  Our next result states that $\lightcone_{\R_-}(0,\theta)$ for the intermediate values of $\theta \in (0,\pi)$ is equal to the range of an $\SLE_\kappa(\rho)$ process provided the boundary data of $h$ is chosen appropriately.

Let
\begin{equation}
\label{eqn::lambda}
\lambda := \frac{\pi}{\sqrt{\kappa}}.
\end{equation}

\begin{theorem}
\label{thm::coupling}
Fix $\kappa \in (0,4)$, $\rho \in [\tfrac{\kappa}{2}-4,-2)$ and $\rho > -2-\tfrac{\kappa}{2}$, and suppose that $h$ is a GFF on $\h$ whose boundary data is given by $-\lambda$ on $\R_-$ and $\lambda(1+\rho)$ on $\R_+$.  Let $\eta$ be an $\SLE_\kappa(\rho)$ process on $\h$ from $0$ to $\infty$ where its force point is located at $0^+$.  For each $t \geq 0$, let $K_t$ denote the closure of the complement of the unbounded connected component of $\h \setminus \eta([0,t])$, let $g_t \colon \h \setminus K_t \to \h$ be the unique conformal transformation with $\lim_{z \to \infty} |g_t(z) - z| = 0$, and let $(W,V)$ be the Loewner driving pair for $\eta$.  There exists a unique coupling of $h$ and $\eta$ such that the following is true.  For each $\eta$-stopping time $\tau$, the conditional law of
\[ h \circ g_\tau^{-1} - \chi \arg( g_\tau^{-1})'\]
given $\eta|_{[0,\tau]}$ is that of a GFF on $\h$ with boundary conditions given by
\[ h|_{(-\infty,W_\tau]} \equiv -\lambda,\quad h|_{(W_\tau,V_\tau]} \equiv \lambda, \quad\text{and}\quad h|_{(V_\tau,\infty)} \equiv \lambda(1+\rho).\]
Moreover, in the coupling $(h,\eta)$, $\eta$ is almost surely determined by $h$.  Finally, let 
\begin{equation}
\label{eqn::lightcone_angle}
 \theta = \theta_\rho = \pi\left(\frac{\rho+2}{\kappa/2-2} \right).
\end{equation}
Then the range of $\eta$ is almost surely equal to $\lightcone_{\R_-}(0,\theta)$.
\end{theorem}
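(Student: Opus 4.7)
The plan is to establish the theorem in three stages, corresponding to its three assertions: existence of the coupling with the stated conditional law of $h$ given $\eta|_{[0,\tau]}$, almost sure determination of $\eta$ by $h$, and identification of the range of $\eta$ with $\lightcone_{\R_-}(0,\theta)$.

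For the coupling, I would adapt the It\^o/martingale technique used in \cite{MS_IMAG} for $\rho > -2$. Let $\mathfrak h_t$ denote the bounded harmonic function on $\h$ with boundary values $-\lambda$ on $(-\infty,W_t]$, $\lambda$ on $(W_t,V_t]$, and $\lambda(1+\rho)$ on $(V_t,\infty)$. A direct It\^o computation using the Loewner equation together with $dW_t = \sqrt{\kappa}\,dB_t + \rho(W_t-V_t)^{-1}\,dt$ shows that, at any fixed point of $\h$, the pulled-back and twisted function $\mathfrak h_t \circ g_t^{-1} - \chi \arg (g_t^{-1})'$ evolves as a local martingale; the boundary constants $-\lambda,\lambda,\lambda(1+\rho)$ are precisely what make the drift from the moving boundary data cancel the $\rho/(W-V)$ drift on $W$ after the $\chi$-twist is included. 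Standard GFF arguments, following the Schramm--Sheffield coupling framework, then yield a coupling with the required conditional mean and with conditional covariance equal to that of a zero-boundary GFF. The new ingredient over \cite{MS_IMAG} is the L\'evy compensation needed when $\rho < -2$: the time-changed process $W-V$ is a $\delta$-dimensional Bessel process with $\delta \in (0,1)$, so its zero set has positive Lebesgue measure, but the martingale calculation still goes through on its complement, and Theorem~\ref{thm::continuous} lets us pass to the limit across the zero times.

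That $\eta$ is a.s.\ determined by $h$ would follow by applying the imaginary geometry uniqueness theory of \cite{MS_IMAG} excursion by excursion. Between consecutive zero times of $W-V$, the curve $\eta$ is a flow line of $h$ whose angle is read off from the boundary data on the two sides of its tip, and such flow lines of the GFF are a.s.\ determined by the field. The conditional Markov property of the coupling, applied at a countable dense set of rational stopping times, together with continuity of $\eta$, then propagates the determination from each excursion interval to the whole curve.

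Identification of the range with $\lightcone_{\R_-}(0,\theta)$ is the main obstacle. For the inclusion $\eta([0,\infty)) \subseteq \lightcone_{\R_-}(0,\theta)$, each excursion of $W-V$ from $0$ is, in the coupling, a flow line chord of $h$ starting from the closure point of the previous excursion (a point of $\R_- \cup \eta$) at an angle dictated by the current boundary data; a direct calculation using $\lambda = \pi/\sqrt\kappa$ and $\chi = 2/\sqrt\kappa - \sqrt\kappa/2$ identifies the range of these angles with $[0,\theta]$ for $\theta = \pi(\rho+2)/(\kappa/2-2)$, consistent with the two endpoint cases $\rho = -2$ (angle $0$, boundary tracing) and $\rho = \kappa/2-4$ (angle $\pi$, counterflow line, in agreement with \cite[Theorem~1.4]{MS_IMAG}). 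For the reverse inclusion $\lightcone_{\R_-}(0,\theta) \subseteq \eta([0,\infty))$, I would use a target-switching / concatenation argument in the spirit of the counterflow line identification of \cite{MS_IMAG}: any angle-varying flow line starting from a rational point of $\R_-$ with rational angles in $[0,\theta]$ and finitely many rational angle-change times can be realized as a sub-path of $\eta$, by conditioning on the appropriate initial segment of $\eta$ and using the Markov property at the switching times to reset the boundary data to the one required at each step. Taking a countable union over such admissible data and then the closure produces all of $\lightcone_{\R_-}(0,\theta)$.
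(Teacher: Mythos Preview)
Your approach runs in the opposite direction from the paper's and has real gaps at each stage.

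First, you invoke Theorem~\ref{thm::continuous} to push the It\^o martingale identity across the zero set of $W-V$, but in this paper continuity is a \emph{consequence} of the coupling, not an input: the exploration path is first built from the light cone and shown continuous via the counterflow line (Lemma~\ref{lem::continuous}), and only then identified in law as $\SLE_\kappa(\rho)$. Without an independent proof of continuity for $\rho<-2$, your argument is circular. More substantively, the paper explicitly remarks (just after the statement of Theorem~\ref{thm::coupling}) that the existence proof here is ``very different'' from the It\^o/stochastic-calculus approach used for $\rho>-2$. The reason is exactly the principal value compensation: $V_t$ is not given by the Loewner flow of a boundary point but by a PV integral, so the force point does not stay on $\partial\h$, and your one-line claim that ``the martingale calculation still goes through on its complement'' does not address the singular local-time contribution that the compensation makes to $V$ (and hence to $\mathfrak h_t$) at the zero set. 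The paper sidesteps this entirely by going the other way: it builds a path $\eta$ from the GFF light cone, proves it is local with the correct conditional boundary data (Lemma~\ref{lem::lightcone_conditional}), reads off from \cite[Theorem~2.4]{MS_IMAG} that the driving pair satisfies the Bessel relations between zero times, and then uses Proposition~\ref{prop::bessel_pv} to pin down the PV integral uniquely---so the It\^o step never has to be carried through the zero set.

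Second, your range identification is incorrect as stated. During each excursion of $W-V$ away from $0$, the boundary data at the tip is $(-\lambda,\lambda)$, so $\eta$ is a $0$-angle flow line on every excursion; there is no ``range of angles $[0,\theta]$'' traced by the excursions themselves. The angle $\theta_\rho$ enters only through the $S_2$ sides of the pockets, which $\eta$ does \emph{not} draw as sub-paths---they lie in the range of $\eta$ only because that range is closed and the $S_2$ boundary of one pocket is accumulated by $S_1$ boundaries of smaller pockets. Consequently your reverse inclusion, which asserts that an arbitrary angle-varying flow line with angles in $[0,\theta]$ ``can be realized as a sub-path of $\eta$,'' fails for any segment with positive angle. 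The paper's argument instead shows (Lemmas~\ref{lem::form_pockets}--\ref{lem::locally_finite} and the proof of Lemma~\ref{lem::continuous}) that the concatenation of $0$-angle pocket boundaries, ordered by a counterflow line, is continuous and has range equal to the full light cone by local finiteness and closure; the equality of this path's law with $\SLE_\kappa(\rho)$ then gives the identification of the range at once.
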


We remark that the existence statement in Theorem~\ref{thm::coupling} takes the same form as that for $\SLE_\kappa(\rho)$ when $\rho > -2$, e.g.,\ \cite[Theorem~1.1]{MS_IMAG}.  The proof that we give here, however, is quite different.  The difference between the different regimes of $\rho$ values is in the way that the coupling is interpreted.  In particular, we interpret the process when $\rho > -2$ as being a flow line of the (formal) vector field $e^{i h /\chi}$ (see the introductions to \cite{SHE_WELD, MS_IMAG} for further explanation) while we interpret the process when $\rho \in [\tfrac{\kappa}{2}-4,-2)$ as an ordered light cone of flow lines of $e^{i h / \chi}$.  The method that we use to prove existence in Theorem~\ref{thm::coupling} is also very different from the existence proof given in \cite{She_SLE_lectures,SHE_WELD,SchrammShe10,DUB_PART} for $\rho > -2$.  Indeed, in these works existence is shown by proving that a sample of the GFF can be produced by first sampling the path according to its marginal distribution and then sampling a GFF on the complement of the range of the path with appropriate boundary conditions.  That the marginal law of the field is a GFF is proved using tools from stochastic calculus.  In the present work, we will use the flow line interaction theory from \cite{MS_IMAG,MS_IMAG2,MS_IMAG3,MS_IMAG4} and the local set theory from \cite{SchrammShe10} to show directly that the path which arises by visiting the points of a light cone with a particular order evolves as an $\SLE_\kappa(\rho)$.  The final statement of Theorem~\ref{thm::coupling} generalizes \cite[Theorem~1.4]{MS_IMAG} to the setting of $\SLE_\kappa(\rho)$ for $\rho \in [\tfrac{\kappa}{2}-4,-2)$ and $\rho > -2-\tfrac{\kappa}{2}$.  In the case of the former, the result followed by studying the manner in which flow and counterflow lines coupled together with the GFF interact with each other.  The proof of Theorem~\ref{thm::coupling} is different.  We will extract the latter from the corresponding result for $\SLE_{\kappa'}(\rho)$ processes with $\rho > -2$ proved in \cite[Theorem~1.3]{MS_IMAG}.

Let $\dimH(A)$ denote the Hausdorff dimension of a set $A$.  The almost sure value of $\dimH(\lightcone_x(\theta))$ is computed in \cite[Theorem~1.1]{LIGHTCONE_DIMENSION}.  

Combining this with Theorem~\ref{thm::coupling} gives that if $\eta$ is an $\SLE_\kappa(\rho)$ process with $\kappa \in (0,4)$, $\rho \in [\tfrac{\kappa}{2}-4,-2)$, and $\rho > -2-\tfrac{\kappa}{2}$, then 
\begin{equation}
\label{eqn::dimension} \dimH(\eta) = \frac{(\kappa-2(2+\rho))(\kappa+2(6+\rho))}{8\kappa} \quad\text{almost surely}.
\end{equation}
This result is stated as \cite[Theorem~1.2]{LIGHTCONE_DIMENSION}.

The decomposition of the range of $\SLE_\kappa(\rho)$ into a light cone of angle-varying flow lines is related to the notion of duality for $\SLE_\kappa$.  The principle of duality states that the outer boundary of an $\SLE_{\kappa'}$ process can be described by a form of $\SLE_\kappa$ for $\kappa \in (0,4)$ and $\kappa'=16/\kappa \in (4,\infty)$, \cite{ZHAN_DUALITY_1,ZHAN_DUALITY_2,DUB_DUAL,MS_IMAG,MS_IMAG4}.  Since the range of an $\SLE_{\kappa'}$ process can be described in terms of a light cone with opening angle $\pi$, it thus follows from Theorem~\ref{thm::coupling} that the law of the range of an $\SLE_\kappa(\tfrac{\kappa}{2}-4)$ is the same as that of a form of $\SLE_{\kappa'}$ (specifically, an $\SLE_{\kappa'}(\tfrac{\kappa'}{2}-4)$).  It turns out, however, that the two processes visit the points in their range using a different order.  This is explained in more detail in Section~\ref{sec::limiting_cases} as well as in \cite{cle_percolations}.  Our final result is the continuity of the law of an $\SLE_\kappa(\rho)$ process as a function of $\rho$ with $\rho$ in the light cone regime.

\begin{theorem}
\label{thm::interpolation}
Fix $\kappa \in (0,4)$, let $D \subseteq \C$ be a bounded Jordan domain, and fix $x,y \in \partial D$ distinct.  The law of the trajectory of an $\SLE_\kappa(\rho)$ process from $x$ to $y$ in $D$ is continuous with respect to the weak topology induced by the topology of uniform convergence modulo time parameterization as $\rho$ varies between $(-2-\tfrac{\kappa}{2})\vee(\tfrac{\kappa}{2}-4)$ and $-2$.
\end{theorem}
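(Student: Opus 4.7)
By conformal invariance of $\SLE_\kappa(\rho)$ and of the uniform topology modulo parameterization, it suffices to treat $D = \HH$, $x = 0$, $y = \infty$. The strategy is to couple the entire family of $\SLE_\kappa(\rho)$ processes (as $\rho$ ranges over a neighborhood of a fixed $\rho_0$) to a single zero-boundary GFF and to deduce continuity from continuous dependence of the ordered light cone construction on both the boundary data of the GFF and the angle parameter $\theta_\rho$.

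Fix $\rho_0 \in ((-2-\tfrac{\kappa}{2})\vee (\tfrac{\kappa}{2}-4), -2)$. Let $\hat h$ be a zero-boundary GFF on $\HH$, and for each admissible $\rho$ let $\mathfrak{h}_\rho$ denote the bounded harmonic function on $\HH$ with boundary values $-\lambda$ on $\R_-$ and $\lambda(1+\rho)$ on $\R_+$; explicit computation gives $\mathfrak{h}_\rho - \mathfrak{h}_{\rho_0} = \lambda(\rho-\rho_0)(1 - \arg(z)/\pi)$, which converges uniformly to zero as $\rho \to \rho_0$. Set $h_\rho := \hat h + \mathfrak{h}_\rho$, so that $h_\rho$ is a GFF with the boundary data of Theorem~\ref{thm::coupling}. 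By that theorem, in this joint coupling each $\SLE_\kappa(\rho)$ path $\eta_\rho$ is a.s.\ determined by $h_\rho$, and its range equals the ordered light cone $\lightcone_{\R_-}(0,\theta_\rho)$ with $\theta_\rho = \pi(\rho+2)/(\kappa/2-2)$ continuous in $\rho$. It thus suffices to show that $\eta_\rho \to \eta_{\rho_0}$ in probability in the uniform topology modulo parameterization.

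The first key ingredient is continuous dependence of flow lines on $\rho$: a flow line of $h_\rho$ from a fixed boundary point at a fixed angle, stopped before exiting a given compact subset of $\HH$, converges in probability to its $\rho_0$-counterpart. This follows because $h_\rho$ and $h_{\rho_0}$ differ only by the bounded harmonic function $\mathfrak{h}_\rho - \mathfrak{h}_{\rho_0}$, together with the absolute-continuity / Girsanov machinery for the GFF coupling developed in \cite{MS_IMAG}. The same reasoning applies to any finite concatenation of angle-varying flow line segments. The second ingredient is that the ordered light cone is built, via the interaction theory of \cite{MS_IMAG,MS_IMAG2,MS_IMAG3,MS_IMAG4}, by assembling countably many such segments indexed by a countable dense set of rational angles in $[0,\theta_\rho]$ and rational angle-change times; continuity of each segment in $\rho$, combined with the continuity $\theta_\rho \to \theta_{\rho_0}$, yields continuity of the assembled curve.

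\textbf{The hard part.} The principal obstacle is promoting the convergence of finitely many angle-varying flow line segments to convergence of the full ordered light cone as a curve modulo parameterization. This demands a uniform modulus-of-continuity estimate for $\eta_\rho$ that is robust as $\rho$ ranges over a compact subinterval of the allowable interval. The natural route is to revisit the derivative-moment bounds used in the proof of Theorem~\ref{thm::continuous} and verify that the implied constants depend continuously on $\rho$, yielding equicontinuity of $\{\eta_\rho\}$ near $\rho_0$; the transience statement in Theorem~\ref{thm::continuous}, applied uniformly in $\rho$, controls the tail behavior of $\eta_\rho$ near $\infty$. With these estimates in hand, together with the flow-line continuity described above, one obtains convergence in probability of $\eta_\rho$ to $\eta_{\rho_0}$ in the desired topology, which is equivalent to the weak convergence asserted in the theorem.
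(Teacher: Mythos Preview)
Your overall coupling idea---fixing a zero-boundary GFF and varying the harmonic boundary correction $\mathfrak{h}_\rho$---is natural, but the argument has a genuine gap at the step you yourself flag as the ``hard part.'' You propose to obtain equicontinuity of $\{\eta_\rho\}$ by revisiting ``the derivative-moment bounds used in the proof of Theorem~\ref{thm::continuous}.'' No such bounds exist in this paper: the continuity of $\SLE_\kappa(\rho)$ in the light cone regime is \emph{not} proved via Rohde--Schramm style estimates on the reverse Loewner flow. It is proved by identifying the path with the light cone exploration and inheriting continuity from the continuity of space-filling $\SLE_{\kappa'}$ (Lemma~\ref{lem::continuous}). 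There is therefore nothing in the present framework that yields a uniform-in-$\rho$ modulus of continuity by the route you sketch, and without it the passage from convergence of finitely many angle-varying segments to convergence of the full ordered curve is unjustified.

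The paper sidesteps this difficulty by a different decomposition. Rather than perturbing the boundary data of the field, it \emph{fixes} a single GFF $h$ with the $\rho$-boundary data and varies only the opening angle $\theta$ of the light cone; Proposition~\ref{prop::interpolation} (whose proof uses the local finiteness of pockets and convergence of the ordering counterflow lines, not derivative bounds) already gives almost sure uniform convergence of the exploration paths $\eta^\theta \to \eta^{\theta_\rho}$ as $\theta \to \theta_\rho$. The remaining issue---that $\eta^\theta$ is not an $\SLE_\kappa(\rho_\theta)$ because the boundary data of $h$ is wrong---is handled by a conditioning trick: one draws the auxiliary flow line $\eta_\theta$ of angle $\theta$ from $0$, conformally maps the region to its left back to $\h$, and observes that the pushed-forward field now has exactly the $\rho_\theta$-boundary data, so the image of $\eta^\theta$ \emph{is} an $\SLE_\kappa(\rho_\theta)$. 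Since $\eta_\theta$ degenerates to $\R_+$ and the conformal maps converge to the identity as $\theta \to \theta_\rho$, the result follows. This approach buys you the uniform control for free, because all the paths live in a single coupling governed by Proposition~\ref{prop::interpolation}; your approach would require proving that proposition simultaneously in the field and the angle, which is strictly harder.
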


\subsection*{Outline}

The remainder of this article is structured as follows.  In Section~\ref{sec::preliminaries}, we will give some preliminaries. In Section~\ref{sec::gff_couplings} we will prove Theorem~\ref{thm::coupling} and then use it to derive Theorem~\ref{thm::continuous} and Theorem~\ref{thm::interpolation}.  Finally, in Section~\ref{sec::limiting_cases} we will explain why the law of the range of an $\SLE_\kappa(\tfrac{\kappa}{2}-4)$ process for $\kappa \in (2,4)$, which is at the boundary of the light cone regime, is equal to the law of a range of an $\SLE_{\kappa'}(\tfrac{\kappa'}{2}-4)$ process, but the processes visit their range in a different order.

\section{Preliminaries}
\label{sec::preliminaries}

In this section, we are going to give an overview of the chordal $\SLE_\kappa(\rho)$ processes, focusing on the particular case that $\rho \in (-2-\tfrac{\kappa}{2},-2)$, as well as summarize some of the basics of imaginary geometry \cite{MS_IMAG,MS_IMAG2,MS_IMAG3,MS_IMAG4} which is relevant for this work.

\subsection{$\SLE_\kappa(\rho)$ processes}
\label{subsec::sle}

In this subsection, we are going to give an overview of the $\SLE_\kappa(\rho)$ processes.  These are variants of $\SLE$ first introduced in \cite[Section~8.3]{LSW_RESTRICTION}.  They are defined in the same way as ordinary $\SLE$, except they are driven by a multiple of a Bessel process in place of a Brownian motion.  The treatment that we give here will parallel that from \cite[Section~3.2 and Section~3.3]{SHE_CLE}.

For the convenience of the reader, we will now review a few basic facts about Bessel processes.  (We refer the reader to \cite[Chapter~11]{RY04} for a more in-depth introduction.)  The starting point for the construction of the law of a \emph{Bessel process of dimension $\delta$} ($\BES^\delta$) is the so-called \emph{square Bessel process of dimension $\delta$} ($\BESQ^\delta$).  For a fixed value of $\delta \in \R$, the law of a $\BESQ^\delta$ is described in terms of the SDE
\begin{equation}
\label{eqn::besq}
dZ_t = \delta dt + 2\sqrt{Z_t} dB_t ,\quad Z_0 = z_0 > 0,
\end{equation}
where $B$ is a standard Brownian motion.  Standard results for SDEs imply that there is a unique strong solution to~\eqref{eqn::besq}, at least up until the first time that $Z$ hits $0$.  When $\delta > 0$, there in fact exists a unique strong solution for all $t \geq 0$ which is non-negative for all times.

A process $X$ has the $\BES^\delta$ law if it admits the expression $X = \sqrt{Z}$ where $Z$ is a $\BESQ^\delta$.  It\^o's formula implies that $X$ solves the SDE
\begin{equation}
\label{eqn::bes}
dX_t = \frac{\delta-1}{2} \cdot \frac{1}{X_t} dt + dB_t,\quad X_0 = x_0,
\end{equation}
at least up until the first time that $X$ hits $0$.  Using that $X_t^{2-\delta}$ is a continuous local martingale, it is straightforward to check that a $\BES^\delta$ process almost surely hits $0$ if $\delta < 2$ and almost surely does not hit $0$ if $\delta \geq 2$.  When $\delta > 1$, a $\BES^\delta$ process solves~\eqref{eqn::bes} in integrated form for all $t \geq 0$, even when it is bouncing off $0$.  In particular, such processes are semimartingales.  A $\BES^1$ process $X$ is equal in distribution to $|B|$ where $B$ is a standard Brownian motion, hence in this case, by the It\^o-Tanaka formula, $X$ solves a version of~\eqref{eqn::bes} with an extra correction coming from the local time of $X$ at $0$.  Thus $\BES^1$ processes are also semimartingales.  However, $X_t^{-1}$ is not integrable in this case.  When $\delta \in (0,1)$, it turns out that a $\BES^\delta$ process is not a semimartingale.  In order to make sense of it as a solution to~\eqref{eqn::bes} in integrated form, one needs to make a so-called principal value correction.  Namely, $X$ satisfies the integral equation
\begin{equation}
\label{eqn::bes_pv}
 X_t = x_0 + \frac{\delta-1}{2} \PV \int_0^t \frac{1}{X_s} ds + B_t.
\end{equation}
As explained in \cite[Chapter~11]{RY04}, the principal value correction can be understood in terms of an integral of the local time process of $X$ at $0$.  We will not discuss the details of this here since the properties and definition of the principal value correction will not play much of a role in this work.

The Bessel processes that we have discussed so far are always non-negative.  We remark that it is also natural in certain contexts to consider Bessel processes which can take on both positive and negative values.  These processes can be constructed by starting off with a Bessel process which is always non-negative and then assigning a random sign to each excursion the process makes from $0$ as a result of the flip of an independent coin toss.  These processes give rise to so-called side-swapping $\SLE_\kappa(\rho)$ processes, which we will not discuss in the present article.

As mentioned just above, the $\BES^\delta$ processes are the starting point for constructing the so-called $\SLE_\kappa(\rho)$ processes.  Fix $\kappa > 0$, $\rho > -2-\tfrac{\kappa}{2}$, and let
\[ \delta = 1+\frac{2(\rho+2)}{\kappa}.\]
Note that $\delta > 0$.  Let $X_t$ be a $\BES^\delta$ and let 
\begin{align*}
	V_t = \frac{2}{\sqrt{\kappa}} \PV \int_0^t \frac{1}{X_s} ds \quad\text{and}\quad W_t = V_t - \sqrt{\kappa} X_t.
\end{align*}
Then the chordal Loewner $(g_t)$ chain driven by $W$, i.e., the solution to the ODE
\[ \partial_t g_t(z) = \frac{2}{g_t(z) - W_t},\quad g_0(z) = z,\]
is an $\SLE_\kappa(\rho)$ process.  The point $g_t^{-1}(V_t)$ gives the location of the so-called \emph{force point} of the $\SLE_\kappa(\rho)$ process at time $t$.

Let us make a few comments about this definition.  In the case that $\rho > -2$ so that $\delta > 1$, the principal value integral is the same as the usual integral.  This implies that $V_t$ is equal to the image under $g_t$ of the rightmost intersection point of the corresponding hull at time~$t$ with~$\R$.  Equivalently, the force point at each time $t$ is located at the rightmost intersection of the hull with~$\R$.  The continuity of the processes in this case were established in \cite{MS_IMAG}, building off the continuity of $\SLE_\kappa$ proved in \cite{RS05}.  In the case that $\rho \in (-2-\tfrac{\kappa}{2},-2)$ so that $\delta \in (0,1)$, the force point of an $\SLE_\kappa(\rho)$ process \emph{does not} stay in $\R$, as a consequence of the principal value correction which is necessary for its definition.  In the case that $\rho \in (-2-\tfrac{\kappa}{2},\tfrac{\kappa}{2}-4]$ and $\kappa \in (2,4)$, the continuity of these processes was proved in \cite{cle_percolations} using couplings of these processes with the GFF and as a consequence of the continuity of so-called space-filling $\SLE$ established in \cite{MS_IMAG4}.  In the present work, we will prove the continuity of these processes for $\rho \in ((-2-\tfrac{\kappa}{2}) \vee (\tfrac{\kappa}{2}-4),-2)$, also using the GFF and the continuity of space-filling $\SLE$, thus covering all possible cases.

The $\SLE_\kappa(\rho)$ processes with $\rho \in (-2-\tfrac{\kappa}{2},-2)$ admit certain approximations which are described in \cite[Section~6]{SHE_CLE}.  The reader might find the description contained there helpful for understanding why the principal value correction leads to the force point of the process not always being on the domain boundary.

We finish this subsection by collecting the following technical result, which we will use in Section~\ref{sec::gff_couplings} in conjunction with \cite[Theorem~2.4]{MS_IMAG} to construct couplings between the $\SLE_\kappa(\rho)$ processes with $\rho \in ((-2-\tfrac{\kappa}{2}) \vee (\tfrac{\kappa}{2}-4),-2)$ and the GFF.

\begin{proposition}
\label{prop::bessel_pv}
Suppose that $X$ is a $\BES^\delta$ with $\delta \in (0,1)$ and that $U$ is a continuous process coupled with $X$ such that $(X,U)$ is strong Markov and possesses the following properties:
\begin{enumerate}[(i)]
\item\label{it::brownian_scaling} $(X,U)$ satisfies Brownian scaling: $t \mapsto (X_{\alpha t}, U_{\alpha t}) \stackrel{d}{=} t \mapsto \sqrt{\alpha}(X_t,U_t)$ for each $\alpha \geq 0$,
\item\label{it::derivative} for each $t \geq 0$ such that $X_t \neq 0$, we have $\frac{d}{dt} U_t = X_t^{-1}$,
\item\label{it::plus_minus_infinity}  $\limsup_{t \to \infty} U_t = \infty$ and $\liminf_{t \to \infty} U_t = -\infty$ almost surely, and
\item\label{it::independent} if $\tau$ is any stopping time for $X$ such that $X_{\tau} = 0$ and $t \geq 0$, then the law of $U_{t+\tau} - U_{\tau}$ is independent of $\sigma( (X_s,U_s) : s \leq \tau)$.
\end{enumerate}
Then
\begin{equation}
\label{eqn::o_pv}
 U_t = \PV \int_0^t \frac{1}{X_s} ds \quad\text{for all}\quad t \geq 0 \quad\text{almost surely}.
\end{equation}
\end{proposition}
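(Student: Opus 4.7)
The plan is to show that $D_t := U_t - \wt U_t$, where $\wt U_t := \PV\int_0^t X_s^{-1}\, ds$, is identically zero. The process $\wt U$ is a continuous measurable functional of $X$, has derivative $X_t^{-1}$ on each open excursion interval of $X$ away from $0$ (where the principal value integral agrees with the ordinary Lebesgue integral), and inherits Brownian scaling from $X$. Comparing with~(ii), the process $D = U - \wt U$, being the difference of continuous processes, is itself continuous and has zero derivative on every excursion interval of $X$, hence is constant on excursions and changes only on the zero set of $X$. Properties~(i) and~(iv) transfer to $D$: it satisfies Brownian scaling by~(i), and at any stopping time $\tau$ with $X_\tau = 0$, the increment $D_{t+\tau} - D_\tau$ is independent of $\sigma((X_s, U_s) : s \leq \tau)$ by~(iv), and by the strong Markov property of $(X,U)$ at such $\tau$ its law depends only on the post-$\tau$ sample path of $X$ (the dependence on $U_\tau$ is removed via~(iv)).

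Let $L_t$ denote the local time of $X$ at $0$, normalized so that its right-continuous inverse $\sigma(s) := \inf\{t : L_t > s\}$ is a stable subordinator of index $1 - \delta/2$; then $L_{\alpha t} \stackrel{d}{=} \alpha^{1-\delta/2} L_t$ and $L_t \to \infty$ almost surely (since $\delta < 2$ makes $X$ recurrent at $0$). Because $D$ is continuous and constant on excursions, there is a unique continuous process $\wh D$ on $[0,\infty)$ with $D_t = \wh D(L_t)$. Using the strong Markov property at successive return times of $X$ to $0$, together with property~(iv), one verifies that $\wh D$ has independent and stationary increments, hence is a L\'evy process. Combining the Brownian scaling of $(X,U)$ with that of $L$ yields the self-similarity relation
\[
\wh D(\beta s) \stackrel{d}{=} \beta^{1/(2-\delta)} \wh D(s),\qquad \beta > 0,
\]
so $\wh D$ is a strictly $\alpha$-stable L\'evy process with $\alpha = 2 - \delta \in (1,2)$.

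The classical structure theorem for L\'evy processes now forces the conclusion: a strictly $\alpha$-stable process with $\alpha \in (1, 2)$ is pure jump (no Gaussian component since $\alpha < 2$, no drift since $\alpha \neq 1$) and, unless identically zero, almost surely has a jump in every non-empty open subinterval of $[0, \infty)$. Since $L$ is continuous and surjective onto $[0, \infty)$, a jump of $\wh D$ at some $s_0 > 0$ would, via $\tau := \inf\{t : L_t \geq s_0\}$, produce a discontinuity of $D$ at $\tau$, contradicting the continuity of $D$. Therefore $\wh D \equiv 0$ almost surely, and $U_t = \wt U_t$ for all $t \geq 0$ almost surely.

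The \textbf{main obstacle} I expect is the careful passage from the strong Markov structure of $(X,U)$ at zero times of $X$ to the L\'evy-process structure of $\wh D$ under the local-time clock---in particular, verifying that the time change by the continuous but non-strictly-increasing $L$ really does yield genuine independent \emph{and} stationary increments for $\wh D$, and that the induced self-similarity index is exactly $1/(2-\delta)$. Once $\wh D$ has been identified as a strictly $(2-\delta)$-stable L\'evy process, the contradiction with continuity is immediate.
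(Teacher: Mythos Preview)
Your argument is correct and follows the same skeleton as the paper's proof: form the difference $D$ with a reference process satisfying the hypotheses, observe that $D$ is constant on excursions of $X$, time-change by the inverse local time to obtain a continuous process with stationary independent increments, and then use scaling to force it to vanish. The paper differs from your treatment in two details. First, rather than subtracting the specific principal-value integral, it subtracts a second generic process $\wt U$ satisfying the same hypotheses, coupled so that $U$ and $\wt U$ are conditionally independent given $X$; this is equivalent since the PV integral is one admissible choice. Second, after recognizing the time-changed difference as a continuous L\'evy process $c_1 B_s + c_2 s$, the paper uses Brownian scaling to kill $c_1$ and then invokes hypothesis~(iii) to kill the drift $c_2$ (since $\ell_t \to \infty$ would otherwise force $\ol U_t \to \pm\infty$). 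Your route---identifying the self-similarity exponent as $1/(2-\delta)$ and observing that a nontrivial strictly $(2-\delta)$-stable L\'evy process with $2-\delta \in (1,2)$ necessarily has jumps---dispenses with~(iii) altogether: the drift term has self-similarity exponent $1 \neq 1/(2-\delta)$ and is already ruled out by scaling. So your argument is in fact slightly sharper in that respect. The point you flag as the main obstacle (passing from the strong Markov structure to genuine L\'evy increments under the local-time clock) is exactly the step the paper also takes in one line, and your outline of it is adequate.
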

\begin{proof}
The choice of~$U$ given by \eqref{eqn::o_pv} satisfies the hypotheses of the proposition, so it suffices to show that it is the only choice which satisfies the hypotheses.  Suppose that $U$, $\wt{U}$ are two processes which satisfy the properties above and are coupled with~$X$ such that~$U$, $\wt{U}$ are independent given $X$ and let $\ol{U} = U - \wt{U}$.  Let~$\ell$ denote the local time for~$X$ at~$0$ and, for each $s \geq 0$, let $t(s) = \inf\{t \geq 0 : \ell_t = s\}$.  Note that $\frac{d}{dt} \ol{U}_t = 0$ for $t \geq 0$ such that $X_t \neq 0$.  This implies that $s \mapsto \ol{U}_{t(s)}$ is a continuous process.  Indeed, if $u \uparrow s$ then $t(u) \uparrow t(s)$ so that $\ol{U}_{t(u)} \to \ol{U}_{t(s)}$.  Let $r$ be the limit of $t(u)$ as $u \downarrow s$.  Then $\ell$ is constant on $(t(s),r)$ hence $\ol{U}_{t(s)} = \ol{U}_r$ and, since $\ol{U}$ is continuous, $\lim_{u \downarrow s} \ol{U}_{t(u)} = \ol{U}_r$.  Therefore $\lim_{u \to s} \ol{U}_{t(u)} = \ol{U}_{t(s)}$, which proves the desired continuity.  By the strong Markov property and \eqref{it::independent}, we also know that $\ol{U}_{t(s)}$ has stationary, independent increments.  This implies that there exists a standard Brownian motion $B$ and constants $c_1,c_2 \in \R$ such that $\ol{U}_{t(s)} = c_1 B_s + c_2 s$.  Equivalently, $\ol{U}_t = c_1 B_{\ell_t} + c_2 \ell_t$.  Since $\ol{U}$, $\ell$, and $B$ all satisfy Brownian scaling, it is easy to see that $c_1 = 0$.  That $c_2 = 0$ then follows from~\eqref{it::plus_minus_infinity} since $\ell_t \to \infty$ almost surely as $t \to \infty$ because $\delta \in (0,1)$.  This implies that there exists at most one process $U$ which satisfies the hypotheses of the proposition.
\end{proof}

\subsection{Imaginary geometry review}
\label{subsec::sle_gff}

We assume in this work that the reader is familiar with the GFF and with imaginary geometry.  We direct the reader to \cite{SHE06} for a more in depth introduction to the GFF and to \cite{MS_IMAG} for a basic introduction to imaginary geometry.  In the present section, we will remind the reader of a few facts which are established in \cite{MS_IMAG,MS_IMAG4} about the manner in which flow lines interact with each other and the definition of space-filling $\SLE$.

We begin with a review of the coupling of chordal $\SLE_\kappa(\rho)$ with $\rho > -2$ with the GFF.  Throughout, we assume that $\kappa \in (0,4)$, $\kappa'=16/\kappa \in (4,\infty)$, and let
\begin{align*}
\chi = \frac{2}{\sqrt{\kappa}}- \frac{\sqrt{\kappa}}{2},\quad \lambda = \frac{\pi}{\sqrt{\kappa}},\quad\text{and}\quad \lambda' = \frac{\pi}{\sqrt{\kappa'}} = \lambda - \frac{\pi}{2} \chi. 	
\end{align*}
(These are the same values as in~\eqref{eqn::chi} and~\eqref{eqn::lambda}.)

\begin{figure}[ht!]
\begin{center}
\includegraphics[scale=0.85]{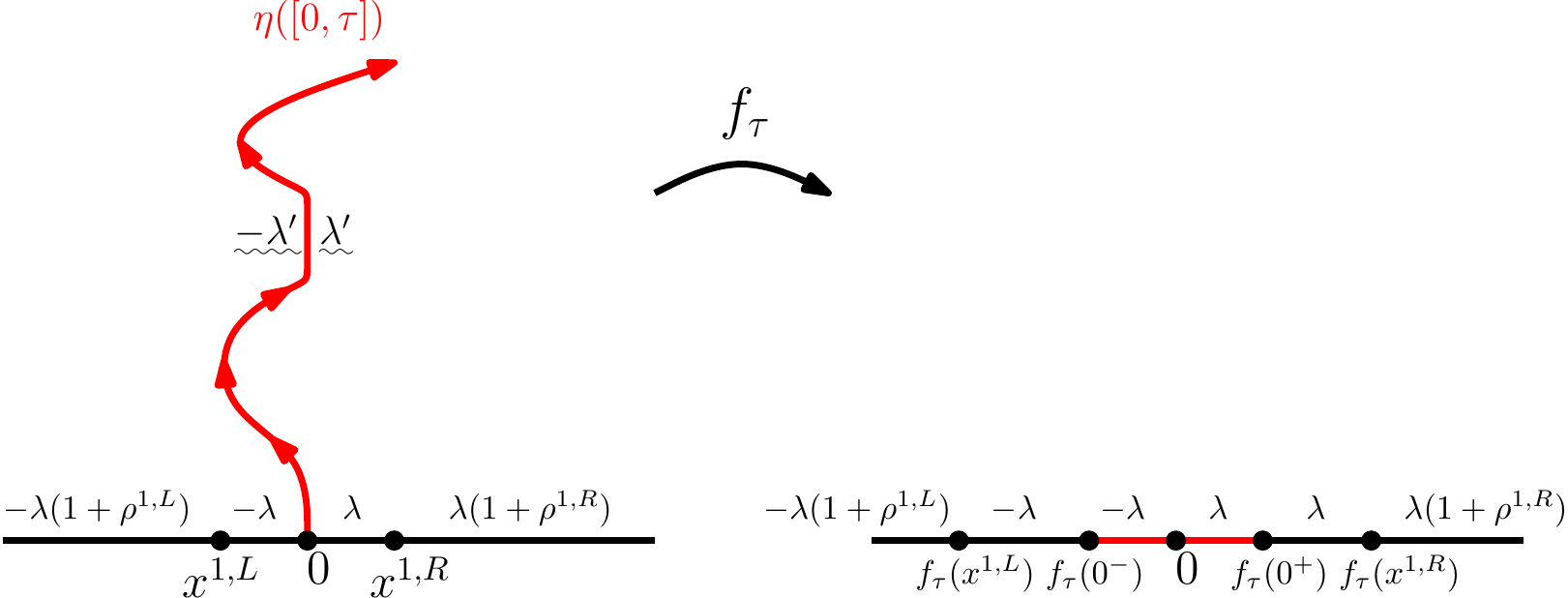}
\caption{\label{fig::conditional_boundary_data}  Suppose that $h$ is a GFF on $\h$ with the boundary data depicted above.  Then the flow line $\eta$ of $h$ starting from $0$ is an $\SLE_\kappa(\ul{\rho}^L;\ul{\rho}^R)$ curve in $\h$ where $|\ul{\rho}^L| = |\ul{\rho}^R| = 1$.  For any $\eta$ stopping time $\tau$, the law of $h$ given $\eta|_{[0,\tau]}$ is equal in distribution to a GFF on $\h \setminus \eta([0,\tau])$ with the boundary data depicted above (the notation $\uwave{a}$ is explained in \cite[Figure~1.10]{MS_IMAG}).  It is also possible to couple $\eta' \sim\SLE_{\kappa'}(\ul{\rho}^L;\ul{\rho}^R)$ for $\kappa' > 4$ with $h$ and the boundary data takes on the same form (with $-\lambda'$, $\lambda' := \frac{\pi}{\sqrt \kappa'}$, in place of $\lambda := \frac{\pi}{\sqrt \kappa}$).  The difference is in the interpretation.  The (almost surely self-intersecting) path $\eta'$ is not a flow line of $h$, but for each $\eta'$ stopping time $\tau'$ the left and right {\em boundaries} of $\eta'([0,\tau'])$ are $\SLE_{\kappa}$ flow lines, where $\kappa=16/\kappa'$, angled in opposite directions.  The union of the left boundaries --- over a collection of $\tau'$ values --- is a tree of merging flow lines, while the union of the right boundaries is a corresponding dual tree whose branches do not cross those of the tree.}
\end{center}
\end{figure}

We suppose that $\rho > -2$ is fixed and that~$h$ is an instance of the GFF on~$\h$ with boundary conditions $\lambda(1+\rho)$ (resp.\ $-\lambda$) on $\R_+$ (resp.\ $\R_-$).  Then it is shown in \cite[Theorem~1.1]{MS_IMAG} that there exists a unique coupling $(h,\eta)$ of~$h$ with an $\SLE_\kappa(\rho)$ process~$\eta$ in~$\h$ from~$0$ to~$\infty$ with a single boundary force point located at~$0^+$ such that the following is true.  Suppose that $(W,V)$ is the Loewner driving pair for $\eta$, $(g_t)$ the corresponding family of conformal maps, and that $\tau$ is an $\eta$-stopping time.  Then the conditional law of $h \circ g_\tau^{-1} - \chi \arg (g_\tau^{-1})'$ given $\eta|_{[0,\tau]}$ is that of a GFF on $\h$ with boundary conditions given by
\[ h|_{(-\infty,W_\tau]} \equiv -\lambda,\quad h|_{(W_\tau,V_\tau]} \equiv \lambda,\quad\text{and}\quad h|_{(V_\tau,\infty)} \equiv \lambda (1+\rho).\]
Equivalently, the conditional law of $h$ given $\eta|_{[0,\tau]}$ restricted to the unbounded component $\h_\tau$ of $\h \setminus \eta([0,\tau])$ is that of a GFF with the same boundary conditions as $h$ on $\partial \h \cap \partial \h_\tau$ and with boundary conditions which are given by $-\lambda'$ (resp.\ $\lambda'$) plus $\chi$ times the winding of $\eta$ along $\eta|_{[0,\tau]}$.  Since $\eta$ is not a smooth curve, its winding is not well-defined along the curve itself, however the harmonic extension of its winding is defined.  We will indicate this type of boundary data in the figures that follow using the notation introduced in \cite[Figure~1.10]{MS_IMAG}.  It is shown in \cite[Theorem~1.2]{MS_IMAG} that $\eta$ is almost surely determined by $h$, which is not an obvious statement from how the coupling is constructed.

The path $\eta$ has the interpretation of being a flow line of the vector field $e^{i h/\chi}$.  Similar statements hold in the presence of more general piecewise constant boundary data.  In the more general setting, the flow line is an $\SLE_\kappa(\ul{\rho})$ process where the number of force points is equal to the number of jumps in the boundary data for $h$.  See Figure~\ref{fig::conditional_boundary_data} for an illustration in the case of two force points.  Similar statements also hold for the existence of a unique coupling of an $\SLE_{\kappa'}$ process $\eta'$ with the GFF, except the interpretation is different.  We refer to $\eta'$ \emph{counterflow line} of $h$ because an $\SLE_{\kappa'}$ process can be realized as a light cone of flow lines which travel in the opposite direction of $\eta'$.

We refer to a path coupled as a flow line with $h+\theta \chi$ as the flow line of~$h$ with angle~$\theta$.  This is because such a path has the interpretation of being the flow line of the vector field $e^{i (h \chi + \theta)}$, i.e., the field which arises by taking all of the arrows in $e^{i h/ \chi}$ and then rotating them by the angle $\theta$.  The manner in which flow lines with different angles interact is established in \cite[Theorem~1.5]{MS_IMAG} as well as \cite[Theorem~1.7]{MS_IMAG4}.  Specifically, if $\eta_{\theta_1}$ (resp.\ $\eta_{\theta_2})$ are the flow lines of a GFF $h$ on $\h$ starting from $x_1 \leq x_2$, then the following holds.  If $\theta_1 > \theta_2$, then $\eta_{\theta_1}$ stays to the left of (but may bounce off) $\eta_{\theta_2}$.  If $\theta_1 = \theta_2$, then $\eta_{\theta_1}$ and $\eta_{\theta_2}$ merge upon intersecting and do not subsequently separate.  Finally, if $\theta_2 - \pi < \theta_1 < \theta_2$, then $\eta_{\theta_1}$ and $\eta_{\theta_2}$ cross upon intersecting for the first time.  After crossing, the paths may continue to bounce off each other but do not cross again.

One can also consider couplings of $\SLE$ with the GFF on domains other than~$\h$.  Specifically, suppose that $D \subseteq \C$ is a simply connected domain and $x,y \in \partial D$ are distinct.  Then to construct a coupling an $\SLE_\kappa(\ul{\rho})$ process $\eta$ in $D$ from $x$ to $y$ with a GFF $h$ on $D$, one starts with such a coupling $(\wt{h},\wt{\eta})$ on $\h$ and then takes
\begin{equation}
\label{eqn::change_coordinates}	
h = \wt{h} \circ \varphi^{-1} - \chi \arg (\varphi^{-1})' \quad\text{and}\quad \eta = \varphi(\wt{\eta})
\end{equation}
where $\varphi \colon \h \to D$ is a conformal transformation which takes $0$ to $x$ and $\infty$ to $y$.  We note that this change of coordinates formula is the same as the one which corresponds to the flow lines of $e^{i h / \chi}$ in the setting that $h$ is a continuous function.

Flow lines of the GFF starting from interior points were constructed and studied in \cite{MS_IMAG4}.  The interaction rules for these paths are the same as in the setting of paths which start on the domain boundary; see \cite[Theorem~1.7]{MS_IMAG4}.  In \cite{MS_IMAG4}, these paths were used to construct so-called \emph{space-filling $\SLE_{\kappa'}$}, which is a form of ordinary $\SLE_{\kappa'}$ except whenever it cuts off a component, it branches in and fills it up before continuing.  Specifically, we suppose that $h$ is a GFF on $\h$ with boundary conditions given by~$\lambda'$ (resp.\ $-\lambda'$) on~$\R_-$ (resp.\ $\R_+$).  (These are the boundary conditions so that the counterflow line of~$h$ from~$0$ to $\infty$ is an $\SLE_{\kappa'}$ process.)  Fix a countable dense set $(w_n)$ in~$\h$ and, for each $n$, we let~$\eta_n$ be the flow line of~$h$ starting from~$w_n$ with angle $\pi/2$.  Then we say that $w_n$ \emph{comes before} $w_m$ if~$\eta_n$ merges with~$\eta_m$ on its left side (see, e.g., \cite[Figure~1.16]{MS_IMAG4}).  This defines an ordering on the $(w_n)$ and space-filling $\SLE_{\kappa'}$ is a non-crossing random path which fills all of~$\h$ and visits the $(w_n)$ in this order.

It turns out that if we target a space-filling $\SLE_{\kappa'}$ process at a given point $z$ (i.e., parameterize it according to capacity as seen from that point), then we obtain exactly the counterflow line of the GFF targeted at $z$.  Therefore the aforementioned ordering also determines the order in which a counterflow line visits the points in its range.

The space-filling $\SLE_{\kappa'}(\ul{\rho})$ processes are defined in an analogous way by starting with a GFF with different boundary data.  One can similarly order space using flow lines of any given angle $\theta$ rather than the angle $\pi/2$ and obtain a continuous, space-filling path.

\section{GFF couplings}
\label{sec::gff_couplings}

In this section, we are going to prove Theorem~\ref{thm::continuous} and Theorem~\ref{thm::coupling} simultaneously and then explain how to extract Theorem~\ref{thm::interpolation} from these results.  We will begin in Section~\ref{subsec::lightcones} by proving several results about the structure of the complementary components (``pockets'') of light cones and then in Section~\ref{subsec::explorations} we will explain how we can use an $\SLE_{\kappa'}$, $\kappa'=16/\kappa \in (4,\infty)$, counterflow line to generate a continuous path which explores the range of a light cone.  In both of these sections, we will restrict ourselves to the case in which the light cone starts from a single boundary point (rather than a continuum) so that we can work in a unified framework.  We will then explain in Section~\ref{subsec::law} that these results also hold in the setting in which the light cone starts from a continuum of boundary points using a conditioning argument and then make the connection to $\SLE_\kappa(\rho)$ processes with $\rho \in [\tfrac{\kappa}{2}-4,-2)$ and $\rho > -2-\tfrac{\kappa}{2}$.

Throughout, unless explicitly stated otherwise, we shall assume that~$h$ is a GFF on~$\D$ which is given by a conformal coordinate change as in~\eqref{eqn::change_coordinates} of a GFF on~$\h$ with piecewise constant boundary data which changes values at most a finite number of times.  The reason for this is that it will be more convenient to work on a bounded Jordan domain rather than~$\h$ because then $\SLE_{\kappa'}$ is uniformly continuous.  We also let
\begin{equation}
\label{eqn::critical_angle}
\theta_c = \frac{\pi \kappa}{4-\kappa}.
\end{equation}
This is the so-called {\bf critical angle} --- the angle difference below which GFF flow lines can intersect each other and at or above which they cannot (see \cite[Theorem~1.5]{MS_IMAG} and \cite[Theorem~1.7]{MS_IMAG4}).  It is shown in \cite{LIGHTCONE_DIMENSION} that the almost sure dimension of a light cone with opening angle $\theta \in [0,\theta_c \wedge \pi)$ is contained in $[1,2)$ and that the dimension is equal to $2$ for $\theta \in [\theta_c \wedge \pi,\pi]$.  Note that $\theta_c \leq \pi$ if and only if $\kappa \leq 2$, which is closely connected with the fact that ordinary $\SLE_{\kappa'}$ is space-filling if and only if $\kappa' \geq 8$ \cite{RS05}.

\subsection{Pocket structure}
\label{subsec::lightcones}

Fix $\theta_1 \leq \theta_2$ with $\theta_2 - \theta_1 \leq \pi$.  For each $n \in \N$, let $\lightcone_n(\theta_1,\theta_2)$ be the closure of the set of points accessible by angle-varying flow lines of $h$ starting from $-i$ which travel either with angle $\theta_1$ or $\theta_2$, change directions at most $n$ times, and only change directions at positive rational times.  The {\bf light cone} $\lightcone(\theta_1,\theta_2) = \ol{\cup_n \lightcone_n(\theta_1,\theta_2)}$ of $h$ (starting from~$-i$) with angle range $[\theta_1,\theta_2]$ is the closure of the set of points accessible by flow lines of $h$ starting from~$-i$ with angle-varying trajectories with angle either equal to~$\theta_1$~or $\theta_2$ and which change directions a finite number of times and only at positive rational times.  Note that this definition is slightly different than that given in the introduction because we only allow the paths to travel with the extremal angles~$\theta_1$ and~$\theta_2$ (and do not allow the intermediate angles).  This definition will be more convenient for us to work with and we will shortly show that it and the one given in the introduction almost surely agree.  For $\theta \in [0,\pi]$, we also let $\lightcone(\theta) = \lightcone(-\tfrac{\theta}{2},\tfrac{\theta}{2})$.

\begin{figure}[ht!]
\begin{center}
\includegraphics[scale=0.85]{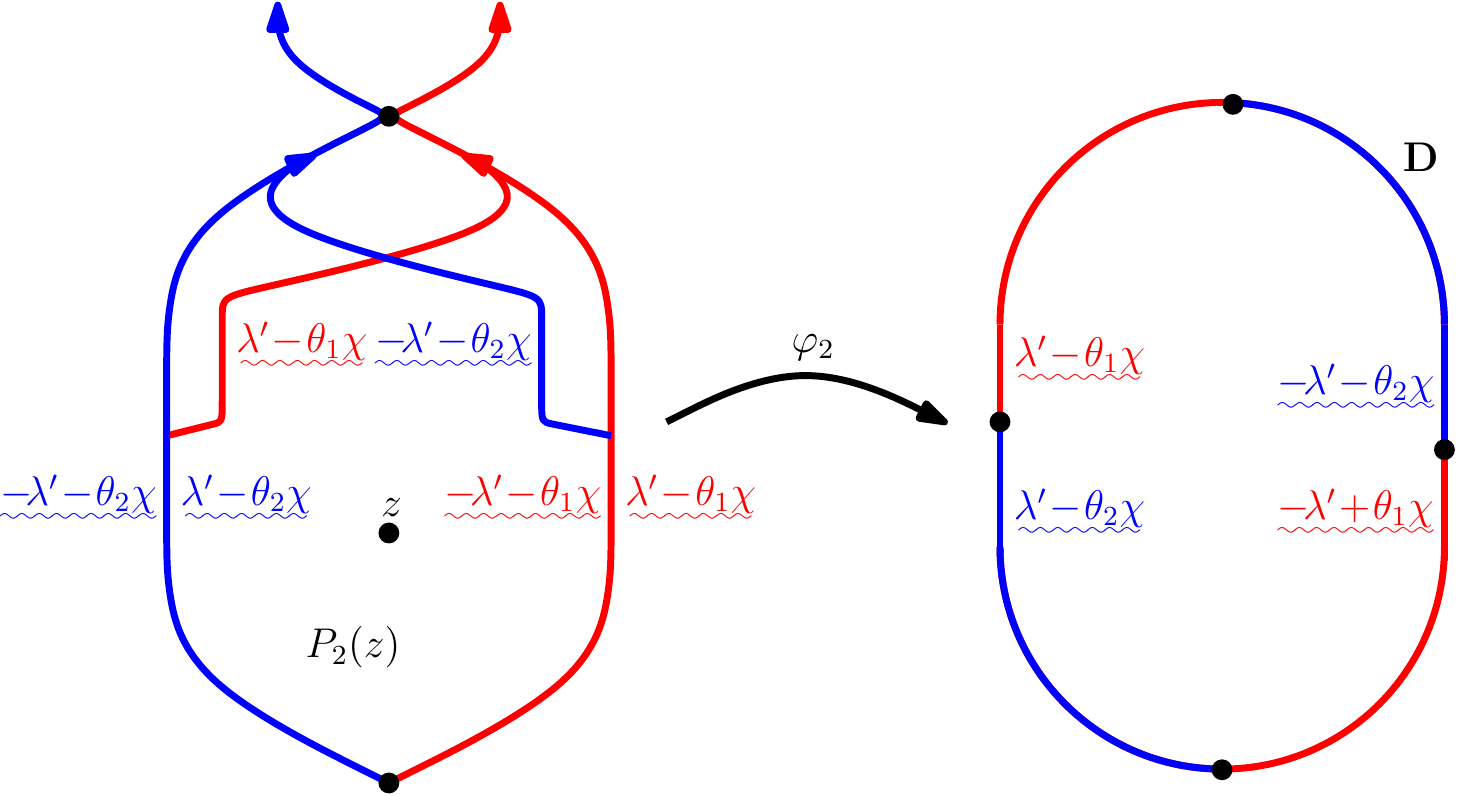}
\end{center}
\caption{\label{fig::lightcone_pocket_finite} Shown on the left is the pocket $P_2(z)$ of $\lightcone_2(\theta_1,\theta_2)$ containing~$z$ on the event that $P_2(z)$ separates $z$ from $\partial \D$.  We let $\varphi_2 \colon P_2(z) \to \D$ be the unique conformal transformation with $\varphi_2(z) = 0$ and $\varphi_2'(z) > 0$.  Shown on the right is the boundary data of the GFF $h \circ \varphi^{-1} - \chi \arg (\varphi^{-1})'$ on $\partial \D$.  The reason that $\D$ on the right side appears not to be perfectly round is so that we can use our notation to label the boundary data.}
\end{figure}

For each $z \in \D$ and $n \in \N$, let $\pocket{n}{z}$ be the complementary component of $\lightcone_n(\theta_1,\theta_2)$ which contains $z$ and let $\pocket{z}$ be the complementary component of $\lightcone(\theta_1,\theta_2)$ which contains $z$.  Throughout, we will refer to such complementary components as (complementary) {\bf pockets} of $\lightcone(\theta_1,\theta_2)$.  We are next going to describe the boundary data of $h$ given $\lightcone(\theta_1,\theta_2)$ on $\partial \pocket{z}$.  It is a consequence of the main result of \cite{LIGHTCONE_DIMENSION} that $\pocket{z} \neq \emptyset$ almost surely provided $\theta_2 - \theta_1 < \theta_c$ and $\theta_2 - \theta_1 \leq \pi$.

\begin{lemma}
\label{lem::form_pockets}
Suppose that $\theta_2 - \theta_1 < \theta_c$ and $\theta_2 - \theta_1 \leq \pi$.  Fix $z \in \D$ and assume that the event $E(z)$ that $\lightcone(\theta_1,\theta_2)$ disconnects $z$ from $\partial \D$ has positive probability.  On $E(z)$, let $\varphi \colon \pocket{z} \to \D$ be the unique conformal transformation with $\varphi(z) = 0$ and $\varphi'(z) > 0$.  Then the boundary data for $\wt{h} = h \circ \varphi^{-1} - \chi \arg (\varphi^{-1})'$ is as described in the left side of Figure~\ref{fig::lightcone_pocket}.  In particular, there exists two distinct marked points $\openL,\closeL \in \partial \pocket{z}$ such that the boundary behavior of $h$ along the clockwise (resp.\ counterclockwise) boundary segment $\side{1}{z}$ (resp.\ $\side{2}{z}$) of $\partial \pocket{z}$ from $\openL$ to $\closeL$ is the same as that of the right (resp.\ left) side of a flow line with angle $\theta_1$ (resp.\ $\theta_2$).
\end{lemma}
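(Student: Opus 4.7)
The plan is to prove the lemma by finite approximation. For each $n \geq 0$, $\lightcone_n(\theta_1,\theta_2)$ is the closure of a union of countably many angle-varying flow line trajectories started at $-i$ with at most $n$ direction changes, all of which happen at positive rational times. By the local set theory of \cite{SchrammShe10} each such trajectory is a local set of $h$, so $\lightcone_n(\theta_1,\theta_2)$ is a local set, and on the event $E_n(z) = \{\lightcone_n(\theta_1,\theta_2)\text{ disconnects } z \text{ from } \partial \D\}$ the conditional law of $h$ given $\lightcone_n(\theta_1,\theta_2)$, restricted to $\pocket{n}{z}$, is a GFF whose boundary data is prescribed along each flow line trace by the usual $\pm\lambda + \theta_i \chi + \chi \cdot (\text{winding})$ formula.

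First I would establish the conclusion for each $\pocket{n}{z}$ by induction on $n$. When the $n$-th angle-varying trajectory is added to $\lightcone_{n-1}(\theta_1,\theta_2)$, the flow line interaction rules of \cite[Theorem~1.5]{MS_IMAG} and \cite[Theorem~1.7]{MS_IMAG4} determine how it meets the existing configuration: flow lines of equal angle merge on intersection, while flow lines whose angle difference lies in $(0,\theta_c)$ can cross at most once and then may bounce. Since the hypothesis $\theta_2 - \theta_1 < \theta_c$ prevents any angle-$\theta_1$ segment from crossing an angle-$\theta_2$ one, the former stays weakly to the right of the latter. This forces the clockwise boundary arc of $\pocket{n}{z}$ to be a concatenation of angle-$\theta_1$ segments joined at merging points, and the counterclockwise arc to be a concatenation of angle-$\theta_2$ segments. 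I would then take $\open{n}$ and $\close{n}$ to be the two extremal meeting points where the two arcs join, and the GFF boundary values on each arc are automatically those depicted in the left panel of Figure~\ref{fig::lightcone_pocket}.

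Second, I would pass to the limit $n \to \infty$. Since $\lightcone_n(\theta_1,\theta_2) \uparrow \lightcone(\theta_1,\theta_2)$, the pockets $\pocket{n}{z}$ decrease, and on $E(z)$ they converge to $\pocket{z}$ in the Carath\'eodory sense seen from $z$. Local set monotone convergence \cite{SchrammShe10} then gives that $\lightcone(\theta_1,\theta_2)$ is itself a local set of $h$ and that the conditional law of $h|_{\pocket{z}}$ given $\lightcone(\theta_1,\theta_2)$ is a GFF whose boundary data is the limit of that on $\partial \pocket{n}{z}$. The marked points $\open{n}$ and $\close{n}$ converge to a pair $\openL, \closeL \in \partial \pocket{z}$, and the two arcs $\side{1}{z}$ and $\side{2}{z}$ inherit the single-angle $\pm\lambda + \theta_i \chi + \chi \cdot (\text{winding})$ description, proving the lemma. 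The equivalence of this definition of the light cone with the one given in the introduction (which allows intermediate rational angles as well) would follow from the observation that any intermediate-angle ray is, in the sub-critical regime, contained in the closure of unions of $\theta_1$- and $\theta_2$-rays.

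The main obstacle I expect is the limit step: verifying that $\open{n}, \close{n}$ do not coalesce or vanish and that no angle-$\theta_2$ trace can appear on the limiting clockwise arc $\side{1}{z}$. Both points are forced by the sub-critical assumption $\theta_2 - \theta_1 < \theta_c$: the non-crossing property of \cite[Theorem~1.5]{MS_IMAG} makes the extremal right boundary of $\pocket{n}{z}$ monotone in $n$ along the angle-$\theta_1$ side (and likewise for the left side), and non-degeneracy of $\openL \neq \closeL$ follows because both $\side{1}{z}$ and $\side{2}{z}$ must separate $z$ from a portion of $\partial \D$ in every finite approximation, which forces each to retain positive harmonic measure in $\pocket{z}$ seen from $z$.
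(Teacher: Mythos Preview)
Your overall strategy---finite approximation via $\lightcone_n(\theta_1,\theta_2)$ followed by a limit---is the same as the paper's. The gap is in your inductive step. Your claim that at each finite stage $n$ the boundary of $P_n(z)$ already decomposes into a clockwise arc made of angle-$\theta_1$ segments and a counterclockwise arc made of angle-$\theta_2$ segments is not correct. At finite~$n$ the boundary of $P_n(z)$ generically carries \emph{four} types of boundary data, namely $L_n^{\theta_1}$, $R_n^{\theta_1}$, $L_n^{\theta_2}$, $R_n^{\theta_2}$ (left and right sides of flow lines of each angle), separated by four marked points; this is exactly what Figure~\ref{fig::lightcone_pocket_finite} depicts for $n=2$. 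For instance, a trajectory that first travels with angle $\theta_2$ and then switches to $\theta_1$ can contribute a piece of \emph{left}-side $\theta_1$ boundary data to $\partial P_n(z)$; the interaction rules of \cite[Theorem~1.5]{MS_IMAG} do not prevent this.

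The substantive content of the lemma---and the idea your argument is missing---is precisely that the two ``wrong'' arcs $L^{\theta_1}$ and $R^{\theta_2}$ vanish in the limit $n\to\infty$. The paper's mechanism is a conformal-radius decrement argument: if, say, $L_n^{\theta_1}$ retained diameter bounded below along a subsequence, then one can launch an additional $\theta_2$-angle flow line from a point on that arc; such a line enters the pocket, so the conformal radius of $P_{n+1}(z)$ seen from $z$ drops by a uniformly positive amount with uniformly positive conditional probability. Iterating contradicts $P(z)\neq\emptyset$, which is known from the dimension result in \cite{LIGHTCONE_DIMENSION}. You flag essentially the right worry in your final paragraph, but the proposed fix---monotonicity of the extremal boundary via non-crossing---does not deliver this conclusion: monotonicity of the nested pockets $P_n(z)$ is automatic and says nothing about which of the four boundary types survives in the limit.
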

\begin{proof}
Assume that we are working on $E(z)$.  Then there exists $n_0 \in \N$ such that $\lightcone_n(\theta_1,\theta_2)$ separates~$z$ from~$\partial \D$ for all $n \geq n_0$.  For each $n \geq n_0$, let $\varphi_n \colon \pocket{n}{z} \to \D$ be the unique conformal transformation with $\varphi_n(z) = 0$ and $\varphi_n'(z) > 0$.  Let $\wt{h}_n = h \circ \varphi_n^{-1} - \chi \arg (\varphi_n^{-1})'$ be the GFF on $\D$ given by conformally mapping $\pocket{n}{z}$ to~$\D$ using~$\varphi_n$ and applying the coordinate change formula~\eqref{eqn::change_coordinates}.  As shown in Figure~\ref{fig::lightcone_pocket_finite} (in the case that $n=2$), the boundary data for~$\wt{h}_n$ has four (possibly degenerate) marked points.  These divide $\partial \D$ into the images~$L_n^{\theta_1}$ and~$L_n^{\theta_2}$ of the pocket boundary formed by the left sides of flow lines with angles~$\theta_1$ and~$\theta_2$, respectively, and the images~$R_n^{\theta_1}$ and~$R_n^{\theta_2}$ of the pocket boundary formed by the right sides of flow lines with angles~$\theta_1$ and~$\theta_2$, respectively.  Note that $\wt{h} = \lim_n \wt{h}_n$.  Consequently, the boundary data for~$\wt{h}$ takes the same form.  Let $L^{\theta_1}$, $L^{\theta_2}$, $R^{\theta_1}$, and $R^{\theta_2}$ be the four marked boundary segments for the boundary data of $\wt{h}$.  If $L^{\theta_1} \neq \emptyset$ or $R^{\theta_2} \neq \emptyset$, then $\diam(L_n^{\theta_1})$ or $\diam(L_n^{\theta_2})$ is bounded from below for arbitrarily large values of~$n$.  This is a contradiction because it is easy to see that on this event, the conformal radius of~$P_{n+1}(z)$ as seen from~$z$ decreases by a uniformly positive amount with uniformly positive probability.  Consequently, $L^{\theta_1} = \emptyset$ and $R^{\theta_2} = \emptyset$ almost surely.  That is, the boundary data for~$\wt{h}$ is in fact as illustrated in the left side of Figure~\ref{fig::lightcone_pocket}, as desired.
\end{proof}

\begin{figure}[ht!]
\begin{center}
\subfigure[]{
\includegraphics[scale=0.85, page=1]{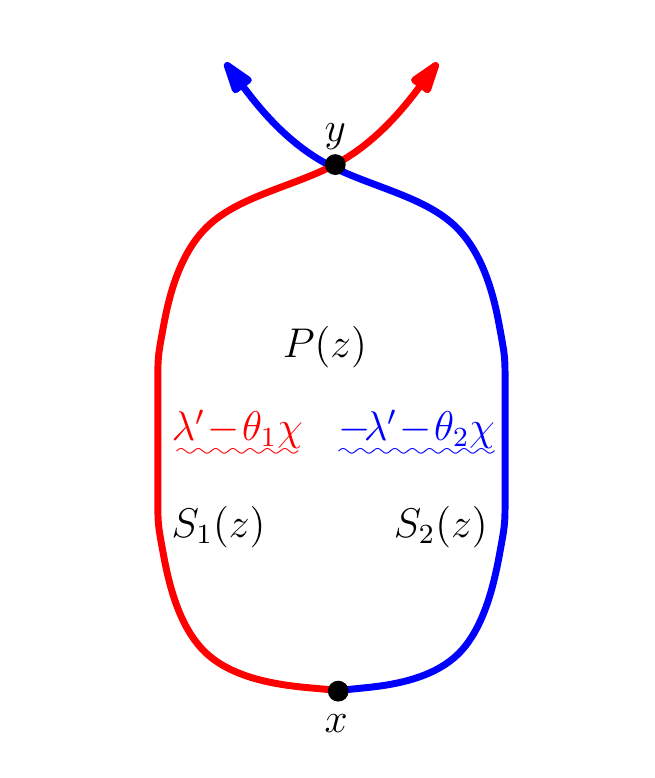}}
\hspace{0.1\textwidth}
\subfigure[]{
\includegraphics[scale=0.85, page=2]{figures/lightcone_pocket}}
\end{center}
\vspace{-0.03\textheight}
\caption{\label{fig::lightcone_pocket} Shown on the left is a pocket~$\pocket{z}$ of $\lightcone(\theta_1,\theta_2)$ containing a given point~$z$ and the boundary data for the conditional law of~$h$ given $\lightcone(\theta_1,\theta_2)$ on $\partial \pocket{z}$.  Note that it is not possible to draw~$\theta_1$-angle (resp.~$\theta_2$-angle) flow lines of~$h$ contained in~$\pocket{z}$ which start from points on~$\side{2}{z}$ (resp.~$\side{1}{z}$).  On the right side, the extra~$\theta_2$-angle flow lines have been drawn in blue to indicate how the paths are ordered using an~$\SLE_{\kappa'}$ counterflow line~$\eta'$.  The dark green path indicates the part~$\eta'$ that fills the right side of~$\side{2}{z}$, the orange path indicates the part of~$\eta'$ which travels from the opening point~$\openL$ to the closing point~$\closeL$ of~$\pocket{z}$, and the light green path indicates the part of~$\eta'$ after it has hit~$\closeL$.  The colored arrows indicate the direction in which the different segments of~$\eta'$ are traveling.  In particular,~$\eta'$ fills the right side of~$\side{2}{z}$ before entering (the interior of) $\pocket{z}$.  Since it has to hit the points on~$\side{1}{z}$ in the reverse order in which they are drawn by~$\sideflow{1}{z}$, after reaching $\openL$, $\eta'$ enters into the interior of $\pocket{z}$ and then travels to $\closeL$.  As it travels up to $\closeL$, it visits point on the left side of $\side{2}{z}$, does not hit $\side{1}{z}$, and does not leave $\ol{\pocket{z}}$.  After reaching $\closeL$, it then visits the points of $\side{1}{z}$ in the reverse order in which they drawn by $\sideflow{1}{z}$.  While it does so, it makes excursions both into and outside of $\pocket{z}$.}
\end{figure}

Throughout, we shall refer to the point~$\openL$ in the statement of Lemma~\ref{lem::form_pockets} as the {\bf opening point} of~$\pocket{z}$.  If we want to emphasize the dependency of~$\openL$ on~$z$, we will write~$\open{z}$ for~$\openL$.  For a generic pocket~$P$, we will write~$\open{P}$ for the opening point of~$P$.  Similarly, we will refer to the point~$\closeL$ in the statement of Lemma~\ref{lem::form_pockets} as the {\bf closing point} of~$\pocket{z}$.  As before, we will write~$\close{z}$ if we want to emphasize the dependency on~$z$ and write~$\close{P}$ for the closing point of a generic pocket~$P$.  We will also use the notation~$\side{j}{z}$ introduced in the statement of Lemma~\ref{lem::form_pockets} to indicate the~$\theta_j$-angle side of~$\partial P(z)$ for $j=1,2$ and write~$\side{j}{P}$ to indicate the same for a generic pocket~$P$.  If~$P$ or~$z$ is understood from the context, then we will simply write~$\side{j}$ for $j=1,2$.  Finally, we note that~$\side{j}{z}$ is equal to the flow line~$\sideflow{j}{z}$ of~$h$ with angle~$\theta_j$ starting from~$\open{z}$ and stopped upon hitting~$\close{z}$.  We will write~$\sideflow{j}{P}$ to indicate these flow lines for a generic pocket~$P$ and write~$\sideflow{j}$ if either~$P$ or~$z$ is understood from the context.  We will now use Lemma~\ref{lem::form_pockets} to show that the definition of the light cone introduced in this section agrees with the one given in the introduction.

\begin{lemma}
\label{lem::lightcone_approximation}
Fix $\theta_1 \leq \theta_2$ with $\theta_2-\theta_1 \leq \pi$.  Let $\lightcone(\theta_1,\theta_2)$ be as defined in the beginning of the subsection and let $\wh{\lightcone}(\theta_1,\theta_2)$ be the closure of the set of points accessible by angle-varying trajectories of $h$ starting from $-i$ with angles which are rational and contained in $[\theta_1,\theta_2]$ or equal to $\theta_1$ or $\theta_2$ and which change angles at most a finite number of times and only at positive rational times.  (This is the definition of the light cone given in the introduction.)  Then $\lightcone(\theta_1,\theta_2) = \wh{\lightcone}(\theta_1,\theta_2)$ almost surely. 
\end{lemma}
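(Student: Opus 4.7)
The inclusion $\lightcone(\theta_1,\theta_2) \subseteq \wh{\lightcone}(\theta_1,\theta_2)$ is immediate from the definitions, since any angle-varying trajectory using only the extremal angles $\theta_1$ and $\theta_2$ is a special case of one with rational angles in $[\theta_1,\theta_2]$. For the reverse inclusion, I note that $\lightcone(\theta_1,\theta_2)$ is closed and that rational angles, rational switching times, and finite switch-counts together index a countable family of trajectories, so it suffices to show for each such trajectory with rational intermediate angles that its range is contained in $\lightcone(\theta_1,\theta_2)$. Iterating the strong Markov property of the flow-line coupling (building the trajectory segment-by-segment against the conditional GFF produced by its predecessors) reduces this to the single statement: for each rational $\theta \in (\theta_1,\theta_2)$ and each tip already known to lie in $\lightcone(\theta_1,\theta_2)$, the flow line $\eta_\theta$ of the conditional GFF with angle $\theta$ from that tip has range contained in $\lightcone(\theta_1,\theta_2)$.

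The plan for this reduction is a zigzag approximation. Write $\theta = \alpha \theta_1 + (1-\alpha)\theta_2$ with rational $\alpha \in (0,1)$, and for each $n$ define $\eta^{(n)}$ to be the angle-varying flow line from the same tip that alternates between angles $\theta_1$ and $\theta_2$ on a rational time grid of mesh $1/n$, spending fractions $\alpha$ and $1-\alpha$ at each angle. Each $\eta^{(n)}$ has range in $\lightcone(\theta_1,\theta_2)$ by construction. If I can show that the ranges of $\eta^{(n)}$ converge in the Hausdorff metric (on any compact capacity interval) to the range of $\eta_\theta$, then the closedness of $\lightcone(\theta_1,\theta_2)$ puts the range of $\eta_\theta$ inside $\lightcone(\theta_1,\theta_2)$, and taking closures over $\theta$, $\alpha$, and switching patterns finishes the proof.

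The main obstacle is the convergence $\eta^{(n)} \to \eta_\theta$, since the boundary data of the conditional GFF, and hence the Loewner drift, jumps at every switch. My strategy is, on each zigzag subinterval, to sandwich the corresponding piece of $\eta^{(n)}$ between a pure-angle-$\theta_1$ and a pure-angle-$\theta_2$ flow line issued from the current tip against the conditional GFF. The interaction rules of \cite[Theorem~1.5]{MS_IMAG} and \cite[Theorem~1.7]{MS_IMAG4}, applied in the regime $\theta_1 < \theta < \theta_2$, force these two envelope curves to lie on strictly opposite sides of the corresponding portion of $\eta_\theta$, so the region they bound contains both $\eta^{(n)}$ and $\eta_\theta$ locally. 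Combining this with the local-set absolute continuity of the GFF from \cite{SchrammShe10} and a standard Loewner estimate on the harmonic-measure contraction of the envelope region as $n \to \infty$ should deliver the required Hausdorff convergence. The degenerate regime $\theta_2 - \theta_1 \geq \theta_c \wedge \pi$, where the envelope curves need not interact, is handled by subdividing $[\theta_1,\theta_2]$ into subintervals of width below the critical angle and iterating tip-by-tip against the Markovian conditional GFF.
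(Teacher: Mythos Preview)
Your proposal has a genuine gap. The heart of your argument is the claimed Hausdorff convergence $\eta^{(n)}\to\eta_\theta$, and the sandwich you describe does not establish it. On each subinterval you launch the envelope flow lines of angles $\theta_1$ and $\theta_2$ from the \emph{current tip of the zigzag} $\eta^{(n)}$, not from a point on $\eta_\theta$. The interaction rules do force the angle-$\theta$ flow line from that same tip to lie between those envelopes, but $\eta_\theta$ is the angle-$\theta$ flow line from the \emph{original} tip; there is no reason the envelopes based at the drifted zigzag tip should bracket $\eta_\theta$ unless you already know the zigzag tip is close to $\eta_\theta$, which is exactly what you are trying to prove. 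Even if one could repair the basing, you still need a quantitative estimate saying the envelope corridor shrinks with the mesh, and nothing like that follows from the qualitative interaction rules you cite. Finally, your handling of the regime $\theta_2-\theta_1\ge\theta_c$ by subdividing $[\theta_1,\theta_2]$ and iterating is circular: the intermediate pivot angles produce trajectories whose membership in $\lightcone(\theta_1,\theta_2)$ is precisely the statement under proof.

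The paper's argument avoids all of this analysis. It never tries to approximate $\eta_\theta$; instead it uses the pocket structure of $\lightcone(\theta_1,\theta_2)$ established in Lemma~\ref{lem::form_pockets}. For $\theta_2-\theta_1<\theta_c$, each complementary pocket $P(z)$ has its two boundary arcs given respectively by the right side of a $\theta_1$-angle flow line and the left side of a $\theta_2$-angle flow line. Any angle-varying trajectory with angles in $[\theta_1,\theta_2]$ would, in order to enter $P(z)$, have to cross one of these arcs in the forbidden direction, which the interaction rules \cite[Theorem~1.7]{MS_IMAG4} preclude. Hence $\wh{\lightcone}(\theta_1,\theta_2)$ misses every pocket and is therefore contained in $\lightcone(\theta_1,\theta_2)$. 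For $\theta_2-\theta_1\ge\theta_c$ both sets simply fill the region between the extremal flow lines. This is a one-paragraph containment argument; no convergence or averaging is needed.
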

\begin{proof}
We may assume without loss of generality that $\theta_1 < \theta_2$ since if $\theta_1=\theta_2$ then the result is trivially true because both $\lightcone(\theta_1,\theta_2)$ and $\wh{\lightcone}(\theta_1,\theta_2)$ are equal to the flow line of $h$ starting from $-i$ with angle $\theta_1=\theta_2$.  It is clear from the definition that $\lightcone(\theta_1,\theta_2) \subseteq \wh{\lightcone}(\theta_1,\theta_2)$ almost surely, so we just need to prove the reverse inclusion.  We first suppose that $\theta_2 - \theta_1 < \theta_c$.  In this case, the result follows because, for each fixed $z \in \D$, the flow line interaction rules \cite[Theorem~1.7]{MS_IMAG4} and Lemma~\ref{lem::form_pockets} imply that an angle-varying trajectory with angles which are rational and contained in $[\theta_1,\theta_2]$ or equal to $\theta_1$ or $\theta_2$ which changes angles at most a finite number of times cannot enter the pocket $\pocket{z}$ of $\lightcone(\theta_1,\theta_2)$ which contains $z$.  Indeed, a flow line of angle $\theta_2$ cannot cross a flow line of angle $\theta_1$ from left to right since $\theta_2 > \theta_1$ and likewise a flow line of angle $\theta_1$ cannot cross a flow line of angle $\theta_2$ from right to left.  The case that $\theta_2 - \theta_1 \geq \theta_c$ follows since for these values we know that both $\lightcone(\theta_1,\theta_2)$ and $\wh{\lightcone}(\theta_1,\theta_2)$ are equal to the set of points which lie between their left and right boundaries.
\end{proof}

Fix angles $\theta_1 < \theta_2$ with $\theta_2 - \theta_1 < \theta_c$ and $\theta_2-\theta_1 \leq \pi$.  Assume that the boundary data of~$h$ is such that the flow lines $\eta_1,\eta_2$ starting from~$-i$ with angles $\theta_1,\theta_2$ almost surely do not hit the continuation threshold (as defined in just before the statement of \cite[Theorem~1.1]{MS_IMAG}).  That is, they both connect~$-i$ to~$i$.  Let~$\eta'$ be the counterflow line of $h+(\theta_2-\tfrac{\pi}{2})\chi$ starting from~$i$.  Then the left boundary of~$\eta'$ stopped upon hitting a point $z \in \D$ is equal to the flow line starting from~$z$ with angle~$\theta_2$.  We are now going to use the flow line interaction rules \cite[Theorem~1.7]{MS_IMAG4} to explain how~$\eta'$ interacts with a pocket~$\pocket{z}$ of~$\lightcone(\theta_1,\theta_2)$.  See Figure~\ref{fig::lightcone_pocket} for an illustration.  If we start a flow line~$\eta_w$ with angle~$\theta_2$ from a point~$w$ inside of~$\pocket{z}$, then it has to merge with~$\sideflow{2}{z}$ on its left side.  Indeed, this is obviously true for topological reasons if~$\eta_w$ merges with~$\sideflow{2}{z}$ before leaving~$\pocket{z}$.  If~$\eta_w$ first leaves~$\pocket{z}$ before merging into~$\sideflow{2}{z}$, then it necessarily crosses~$\sideflow{1}{z}$ from the right to the left.  If~$\eta_w$ were to subsequently wrap around and merge with~$\sideflow{2}{z}$ on its right side, then it would be forced to cross~$\sideflow{1}{z}$ a second time, which is a contradiction to \cite[Theorem~1.7]{MS_IMAG4}.  This proves the claim since flow lines with the same angle almost surely merge.  Similarly, if we start a flow line from a point~$w$ on~$\side{1}{z}$ then it merges with~$\sideflow{2}{z}$ on its left side.  Consequently, it follows from \cite[Theorem~1.13]{MS_IMAG4} that:
\begin{enumerate}
\item $\eta'$ enters (the interior of) $\pocket{z}$ at $\open{z}$ after filling the right side of $\side{2}{z}$.
\item Upon entering $\pocket{z}$, $\eta'$ visits points on the left side of $\side{2}{z}$ as it travels from $\open{z}$ to $\close{z}$.  It does not touch $\side{1}{z}$ until hitting $\close{z}$.
\item Upon hitting $\close{z}$, it visits the points of $\side{1}{z}$ in the reverse order in which they are drawn by $\sideflow{1}{z}$ and, while doing so, $\eta'$ makes excursions both into and out of $\pocket{z}$.
\end{enumerate}
We are now going to extract from this and the continuity of space-filling $\SLE_{\kappa'}$ the local finiteness of the pockets of the light cone.

\begin{lemma}
\label{lem::locally_finite}
Suppose that we have the setup described just above (in particular, the boundary data of $h$ is such that the left and right boundaries $\eta_1,\eta_2$ of $\lightcone(\theta_1,\theta_2)$ almost surely do not hit the continuation threshold before hitting $i$).  The pockets of $\lightcone(\theta_1,\theta_2)$ are almost surely locally finite: that is, for each $\epsilon > 0$, the number of pockets of $\lightcone(\theta_1,\theta_2)$ with diameter at least $\epsilon$ is finite almost surely.
\end{lemma}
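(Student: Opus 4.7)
The plan is to use the almost sure continuity of the counterflow line $\eta'$ on a compact time interval, combined with the pocket-traversal description (items 1--3 preceding the lemma), to associate to each pocket of diameter at least $\epsilon$ a disjoint time sub-interval of length at least $\delta = \delta(\epsilon) > 0$. Since the total time available is finite, only finitely many such intervals can fit, and the result follows by a counting step.

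First I would reparameterize $\eta'$ so that it becomes a continuous map from a compact time interval $[0,T]$ into $\ol{\D}$. This is justified because $\eta'$ is an $\SLE_{\kappa'}(\ul{\rho})$ process whose $\ul{\rho}$ lies in the ``ordinary'' counterflow-line regime (all weights $> -2$), where almost sure continuity and transience are established in \cite[Theorem~1.3]{MS_IMAG}. Uniform continuity of $\eta'$ on $[0,T]$ then yields, for every $\epsilon > 0$, a modulus $\delta = \delta(\epsilon) > 0$ with $|s-t| < \delta \Rightarrow |\eta'(s) - \eta'(t)| < \epsilon/4$. For each pocket $P$, let $\sigma_P$ be the first time $\eta'$ reaches $\open{P}$ and $\tau_P > \sigma_P$ the first time it reaches $\close{P}$. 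By items~1 and 2 of the preceding discussion, $\eta'((\sigma_P,\tau_P))$ lies in the interior of $P$ and $\sideflow{2}{P}$ lies in the closure of $\eta'([\sigma_P,\tau_P])$ as its left boundary; in particular $\diam(\eta'([\sigma_P,\tau_P])) \geq \diam(\sideflow{2}{P})$. Since distinct pockets have disjoint interiors, the open intervals $(\sigma_P,\tau_P)$ are pairwise disjoint. Consequently, if $\diam(\sideflow{2}{P}) \geq \epsilon/2$ then $\tau_P - \sigma_P \geq \delta$, so at most $T/\delta$ pockets satisfy this.

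An entirely analogous argument, applied to a second counterflow line constructed to traverse each pocket along $\sideflow{1}{P}$ in a single connected time interval (obtained by an appropriate angle shift of the GFF so that the roles of $\theta_1$ and $\theta_2$ in the pocket-traversal description swap), shows that only finitely many pockets have $\diam(\sideflow{1}{P}) \geq \epsilon/2$. Since $\sideflow{1}{P}$ and $\sideflow{2}{P}$ are connected sets sharing the endpoints $\open{P}$ and $\close{P}$, we have $\diam(P) = \diam(\partial P) \leq \diam(\sideflow{1}{P}) + \diam(\sideflow{2}{P})$, so every pocket of diameter at least $\epsilon$ has at least one side of diameter at least $\epsilon/2$. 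Combining the two finite counts yields the lemma.

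The main obstacle I anticipate is setting up the second counterflow line so that items~1--3 go through with the roles of $\sideflow{1}{P}$ and $\sideflow{2}{P}$ interchanged. This requires choosing the correct angle shift (and perhaps a different starting point) so that the new counterflow line is still in the regime where \cite[Theorem~1.3]{MS_IMAG} applies and so that the flow-line interaction rules \cite[Theorem~1.7]{MS_IMAG4} again yield a single connected time interval during which the curve stays in $\ol{P}$ while tracing $\sideflow{1}{P}$. A secondary subtlety is that for $\kappa' \in (4,8)$ the curve $\eta'$ is not simple and can tangentially touch $\sideflow{2}{P}$ during $(\sigma_P,\tau_P)$; this is harmless because the disjointness argument only requires $\eta'$ to lie in the interior of $P$ on the open interval, which is precisely what item~2 delivers.
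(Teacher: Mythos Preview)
Your overall strategy---associate to each pocket a disjoint time interval of the exploring curve, bound its length below via uniform continuity, then combine the two sides by the triangle inequality $\diam(P)\le\diam(\side{1}{P})+\diam(\side{2}{P})$---is exactly the paper's. The gap is in the diameter step. You assert that $\sideflow{2}{P}$ lies in $\eta'([\sigma_P,\tau_P])$, but items~1--3 do not give this: item~2 only says $\eta'$ ``visits points on the left side of $\side{2}{z}$'' during that interval, i.e.\ it touches $\side{2}{P}$ at a (typically Cantor-like) subset, not at every point. Item~1 does say $\eta'$ fills the right side of $\side{2}{P}$, but that happens \emph{before} time $\sigma_P$, and those earlier intervals are not disjoint across pockets. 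In a long thin crescent-shaped pocket with $\open{P}$ and $\close{P}$ close together, $\eta'|_{[\sigma_P,\tau_P]}$ can have small diameter while $\diam(\side{2}{P})$ is large, so the inequality $\diam(\eta'([\sigma_P,\tau_P]))\ge\diam(\side{2}{P})$ is not available.

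The paper's proof repairs exactly this point by replacing the ordinary counterflow line with \emph{space-filling} $\SLE_{\kappa'}$ (from \cite{MS_IMAG4}), after first reducing to a single component $C$ of $\D\setminus(\eta_1\cup\eta_2)$. The space-filling curve, by construction, completely fills $\side{2}{P}$ while travelling from $\open{P}$ to $\close{P}$, so that $\eta'(I_z)\supseteq\side{2}{z}$ holds on the nose and the disjoint-interval count goes through. The paper even flags this as the one respect in which the space-filling curve differs from the ordinary counterflow line. Your symmetric second curve for the $\side{1}$ side is also what the paper does (again with a space-filling version), so once you swap in the space-filling curve your argument becomes the paper's.
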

\begin{proof}
The result trivially holds for $\theta_2 - \theta_1 \geq \theta_c$ because then~$\lightcone(\theta_1,\theta_2)$ is space-filling hence does not have pockets which lie between~$\eta_1$ and~$\eta_2$.  The pockets which are not surrounded by~$\eta_1$ and~$\eta_2$ are locally finite because~$\eta_1$ and~$\eta_2$ are continuous paths.  We now suppose that $\theta_2 - \theta_1 < \theta_c$ so that~$\lightcone(\theta_1,\theta_2)$ has pockets which lie between~$\eta_1$ and~$\eta_2$.  Since the components of $\D \setminus (\eta_1 \cup \eta_2)$ are locally finite, it suffices to show that the pockets of~$\lightcone(\theta_1,\theta_2)$ which are contained in a given component are locally finite.

Fix such a component~$C$ and let~$\eta'$ be the space-filling $\SLE_{\kappa'}$ process starting from~$y$, the last point on~$\partial C$ hit by~$\eta_1$ and~$\eta_2$, and targeted at~$x$, the first point on~$\partial C$ hit by~$\eta_1$ and~$\eta_2$.  We choose~$\eta'$ so that its left boundary stopped upon hitting any given point is equal to the flow line of~$h$ with angle~$\theta_2$ starting from that point.  Then~$\eta'$ interacts with a pocket~$\pocket{z}$ of~$\lightcone(\theta_1,\theta_2)$ for $z \in C$ in the same manner as the counterflow line described before the statement of the lemma except that it completely fills~$\side{2}{z}$ while traveling from~$\open{z}$ to~$\close{z}$.  Note that for disjoint pockets~$\pocket{z}$ and~$\pocket{w}$ of~$\lightcone(\theta_1,\theta_2)$ contained in~$C$, the time-interval~$I_z$ in which~$\eta'$ travels from~$\open{z}$ to~$\close{z}$ is disjoint from the time-interval~$I_w$ in which it travels from~$\open{w}$ to~$\close{w}$.  Moreover, for each $z \in \D$, $\eta'(I_z)$ contains~$\side{2}{z}$.  Consequently, it follows from the continuity of space-filling $\SLE_{\kappa'}$ that the number of pockets~$P$ such that $\diam(\side{2}{P}) \geq \epsilon$ is finite almost surely.  The same is also true for the number of pockets~$P$ such that $\diam(\side{1}{P}) \geq \epsilon$ because we can take a space-filling $\SLE_{\kappa'}$ whose right boundary stopped upon hitting a given point~$z$ is given by the flow line starting from~$z$ with angle~$\theta_1$ in place of~$\eta'$ and then apply the same analysis.  This completes the proof since the triangle inequality implies that $\diam(P) \leq \diam(\side{1}{P}) + \diam(\side{2}{P})$ for any pocket~$P$.
\end{proof}

\begin{figure}[ht!]
\begin{center}
\subfigure[]{
\includegraphics[scale=0.85, page=1]{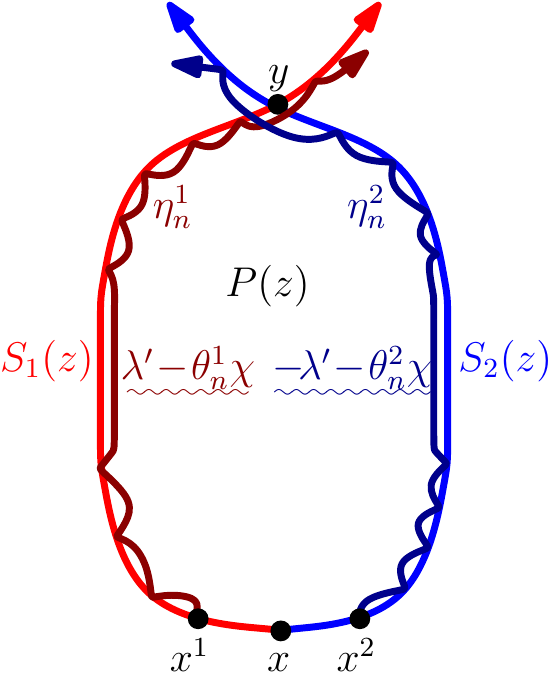}}
\hspace{0.15\textwidth}
\subfigure[]{
\includegraphics[scale=0.85, page=2]{figures/lightcone_continuous}}
\end{center}
\caption{\label{fig::lightcone_continuous} Shown on the left side is the pocket $\pocket{z}$ of $\lightcone(\theta_1,\theta_2)$ containing~$z$.  Its opening (resp.\ closing) point is $\openL$ (resp.\ $\closeL$).  Suppose that $(\theta_n^1)$ (resp.\ $(\theta_n^2)$) is a sequence of angles which increase to~$\theta_1$ (resp.\ decrease to $\theta_2$).  We take~$\eta_n^1$ (resp.\ $\eta_n^2$) to be a flow line of angle $\theta_n^1$ (resp.\ $\theta_n^2$) starting from $x^1 \in \partial \pocket{z}$ (resp.\ $x^2 \in \partial \pocket{z}$).  As $n \to \infty$, $\eta_n^1$ and~$\eta_n^2$ converge in the Hausdorff topology to the segments of~$\side{1}{z}$ and~$\side{2}{z}$, respectively, which connect~$x^1$ and~$x^2$ to~$y$.  The right is the same as the left except we have drawn dual paths $\wt{\eta}_n^1,\wt{\eta}_n^2$ starting from points on $\eta_n^1,\eta_n^2$, respectively.  Explicitly, $\eta_n^1$ (resp.\ $\eta_n^2$) has angle $\theta_n^1-\pi$ (resp.\ $\theta_n^2+\pi$).  These paths will intersect and bounce off each other as shown.  By the flow line interaction rules, $\wt{\eta}_n^1$ cannot cross either~$\eta^2$ or~$\eta_n^2$ but can cross out of~$\pocket{z}$ through the clockwise segment of~$\side{2}{z}$ from~$x$ to~$x^1$ and the symmetric fact holds for $\wt{\eta}_n^2$.  Since an angle varying flow line with angles contained in $[\theta_n^1,\theta_n^2]$ cannot cross from the right to the left (resp.\ left to the right) of $\wt{\eta}_n^1$ (resp.\ $\wt{\eta}_n^2$), it follows that the pocket of $\lightcone(\theta_n^1,\theta_n^2)$ which contains $z$ almost surely contains the light blue region on the right.  This allows us to prove the continuity of the law of $\lightcone(\theta_1,\theta_2)$ in $\theta_1,\theta_2$ with respect to the Hausdorff topology because the Hausdorff distance between~$\pocket{z}$ and the blue region will with probability tending to~$1$ decrease to $0$ as we take a limit first as $n \to \infty$ and then as $x^1,x^2 \to x$, and then finally the starting points of $\wt{\eta}_n^1,\wt{\eta}_n^2$ to $\openL$ as well.}
\end{figure}

We are now going to establish the continuity of the law of $\lightcone(\theta_1,\theta_2)$ in $\theta_1 \leq \theta_2$ with $\theta_2-\theta_1 \leq \pi$ with respect to the Hausdorff topology.  See Figure~\ref{fig::lightcone_continuous} for an illustration of the setup and the proof.

\begin{proposition}
\label{prop::lightcone_continuous}
Suppose that we have the same setup as in Lemma~\ref{lem::locally_finite} and that $\theta_1 \leq \theta_2$ are angles with $\theta_2 - \theta_1 \leq \pi$.  Let $(\theta_n^1)$, $(\theta_n^2)$ be sequences of angles with $\theta_n^1 \leq \theta_n^2$ and $\theta_n^2 -\theta_n^1 \leq \pi$ for all $n \in \N$ such that $\theta_n^j \to \theta_j$ as $n \to \infty$ for $j=1,2$.  Then $\lightcone(\theta_n^1,\theta_n^2) \to \lightcone(\theta_1,\theta_2)$ as $n \to \infty$ almost surely with respect to the Hausdorff topology.
\end{proposition}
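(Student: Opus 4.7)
The plan is to reduce the Hausdorff convergence to a pocket-by-pocket comparison, using the local finiteness established in Lemma~\ref{lem::locally_finite}. I will verify both directions of the Hausdorff convergence separately: that $\lightcone(\theta_1,\theta_2)$ is contained in a small neighborhood of $\lightcone(\theta_n^1,\theta_n^2)$ for large $n$, and vice versa. Given $\epsilon > 0$, Lemma~\ref{lem::locally_finite} says only finitely many complementary pockets $P(z_1),\ldots,P(z_k)$ of $\lightcone(\theta_1,\theta_2)$ have diameter exceeding $\epsilon/4$. After a thin neighborhood of $\eta_1 \cup \eta_2$ is removed, only these finitely many pockets need to be compared with the corresponding components of $\D\setminus\lightcone(\theta_n^1,\theta_n^2)$.

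For the inclusion $\lightcone(\theta_1,\theta_2) \subseteq \mathcal{N}_\epsilon(\lightcone(\theta_n^1,\theta_n^2))$, I would use continuity of GFF flow lines in the angle parameter -- varying the angle by $\theta$ corresponds to adding the constant $\theta\chi$ to $h$, so this is a standard consequence of the coupling theory of \cite{MS_IMAG}. Any finite-step angle-varying trajectory with angles in $\{\theta_1,\theta_2\}$ and rational switching times is approximated uniformly by the trajectory with identical switching times and angles in $\{\theta_n^1,\theta_n^2\}$, and a countable dense collection of such trajectories generates $\lightcone(\theta_1,\theta_2)$. For the reverse inclusion, the essential content is to show that $\lightcone(\theta_n^1,\theta_n^2) \cap P(z)$ is contained in an $\epsilon$-neighborhood of $\partial P(z)$ for each large pocket $P(z)$; this requires exhibiting an explicit subset of $P(z)$ that is forbidden to the approximating light cone.

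Following the construction in Figure~\ref{fig::lightcone_continuous}, fix such a pocket $P = \pocket{z}$ and pick points $x^1 \in \side{1}{z}$ and $x^2 \in \side{2}{z}$ close to $\open{P}$. Let $\wt{\eta}_n^1$ and $\wt{\eta}_n^2$ be flow lines of $h$ from $x^1,x^2$ with angles $\theta_n^1 - \pi$ and $\theta_n^2 + \pi$, stopped upon first hitting $\side{2}{z}$ or $\side{1}{z}$ respectively. The flow line interaction rules in \cite[Theorem~1.7]{MS_IMAG4} then forbid any angle-varying trajectory with angles in $[\theta_n^1,\theta_n^2]$ from crossing $\wt{\eta}_n^1$ from right to left or $\wt{\eta}_n^2$ from left to right, so the bounded region $R_n \subseteq P$ enclosed by $\wt{\eta}_n^1, \wt{\eta}_n^2$ together with arcs of $\side{1}{z}, \side{2}{z}$ is disjoint from $\lightcone(\theta_n^1,\theta_n^2)$. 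Letting $n\to\infty$ first (so the dual paths converge in Hausdorff distance to flow lines of angles $\theta_1-\pi$ and $\theta_2+\pi$ which by the angle restriction must stay in $P$ and hug the opposite side) and then sliding $x^1,x^2$ towards $\open{P}$, together with an analogous construction at $\close{P}$, the region $R_n$ eventually exhausts $P$ up to a thin neighborhood of $\partial P$, yielding the required containment.

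The main obstacle is the joint continuity of the dual flow lines in both angle and starting point, together with the claim that the limiting flow line of angle $\theta_1-\pi$ started near $\open{P}$ on $\side{1}{z}$ really does follow $\side{2}{z}$ closely all the way to a point near $\close{P}$ (rather than terminating early on one of the boundary pieces). This should follow by combining the monotonicity and merging-upon-intersection rules from \cite{MS_IMAG,MS_IMAG4} with the boundary intersection analysis developed there, using that the relevant angle differences lie in $[\pi, 2\pi]$. The degenerate cases $\theta_1 = \theta_2$ (which reduces directly to continuity of a single flow line in angle) and $\theta_2-\theta_1 \geq \theta_c$ (where $\lightcone(\theta_1,\theta_2)$ fills the region bounded by $\eta_1,\eta_2$, so continuity reduces to that of these two boundary paths) are handled separately but are simpler.
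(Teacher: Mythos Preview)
Your overall plan---reducing to finitely many large pockets via Lemma~\ref{lem::locally_finite} and then building, inside each pocket, a barrier region that the approximating light cone cannot enter---matches the paper's approach. However, the barrier you construct does not actually work.

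The issue is with your claim that the region $R_n$, bounded by the dual paths $\wt\eta_n^1,\wt\eta_n^2$ together with arcs of $S_1(z)$ and $S_2(z)$, is disjoint from $\lightcone(\theta_n^1,\theta_n^2)$. Consider the monotone case $\theta_n^1\uparrow\theta_1$, $\theta_n^2\downarrow\theta_2$ (the one the paper treats explicitly). Then $[\theta_n^1,\theta_n^2]\supsetneq[\theta_1,\theta_2]$, and the approximating light cone genuinely penetrates into $P(z)$: for instance, the flow line of angle $\theta_n^1<\theta_1$ started from the opening point $\open{P}$ lies strictly to the right of $S_1(z)$, hence inside $P(z)$. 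The arcs of $S_1(z),S_2(z)$ are flow lines of the \emph{limiting} angles $\theta_1,\theta_2$; the interaction rules do not prevent an angle-varying trajectory with angles in the wider interval $[\theta_n^1,\theta_n^2]$ from crossing them. So a thin strip of $R_n$ along $S_1(z)$ and $S_2(z)$ is already contained in $\lightcone(\theta_n^1,\theta_n^2)$, and your non-crossing argument (which is correct for $\wt\eta_n^1,\wt\eta_n^2$ themselves) says nothing about these arcs.

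The paper's remedy is exactly the missing step: before drawing the dual paths, first draw flow lines $\eta_{j,n}^1,\eta_{j,n}^2$ from $x^1,x^2$ with the \emph{approximating} angles $\theta_n^1,\theta_n^2$. These converge to the relevant segments of $S_1(z),S_2(z)$ as $n\to\infty$ (this is where the $\rho^L\downarrow-2$ argument enters). The dual paths are then launched from points on $\eta_{j,n}^1,\eta_{j,n}^2$, not on $S_1(z),S_2(z)$, and an absolute continuity comparison (with a genuine $[\theta_n^1,\theta_n^2]$-pocket) shows they intersect each other near $\open{P}$. Now the barrier region is bounded entirely by paths of angles $\theta_n^1,\theta_n^2,\theta_n^1-\pi,\theta_n^2+\pi$, and the interaction rules do forbid $[\theta_n^1,\theta_n^2]$-trajectories from entering. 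Note also that in this picture the dual paths are short and meet near the opening point; they do not trace the opposite sides all the way to $\close{P}$ as you envision, so your stated ``main obstacle'' (that the limiting $\theta_1-\pi$ path follows $S_2(z)$ to the closing point) is both unlikely to hold and unnecessary.

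Your first-direction argument is essentially fine once you add a compactness step (finitely many trajectories form an $\epsilon/2$-net in $\lightcone(\theta_1,\theta_2)$), though in the monotone case above that direction is immediate from the containment $\lightcone(\theta_1,\theta_2)\subseteq\lightcone(\theta_n^1,\theta_n^2)$.
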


\begin{remark}
\label{rem::lightcone_not_continuous}
Proposition~\ref{prop::lightcone_continuous} implies that for a \emph{fixed} choice of $\theta_1 \leq \theta_2$, we have that $\lightcone(\theta_n^1,\theta_n^2) \to \lightcone(\theta_1,\theta_2)$ almost surely.  It does not imply that $(\theta_1,\theta_2) \mapsto \lightcone(\theta_1,\theta_2)$ is a continuous function with respect to the Hausdorff topology for a fixed realization of $h$.  Indeed, this statement is not true because the left boundary of $\lightcone(\theta_1,\theta_2)$ is the flow line of $h$ with angle $\theta_1$ and for a fixed realization of $h$ the map which takes an angle to the flow line of $h$ starting from that angle is not continuous with respect to the Hausdorff topology.  Indeed, if this were true then the \emph{fan} defined in \cite{MS_IMAG} would almost surely have positive Lebesgue measure but it is shown in \cite{MS_IMAG} that the Lebesgue measure is zero almost surely.  In fact, it is shown in \cite{LIGHTCONE_DIMENSION} that the dimension of the fan is the same as the dimension of a single $\SLE_\kappa$ path.
\end{remark}

\begin{proof}[Proof of Proposition~\ref{prop::lightcone_continuous}]
We are going to give the proof in the case that $(\theta_n^1)$ increases to~$\theta_1$ and $(\theta_n^2)$ decreases to~$\theta_2$.  We will also assume that $\theta_2 - \theta_1 < \theta_c$.  The proof in the other possible cases is similar.  By Lemma~\ref{lem::locally_finite}, we know that the pockets of~$\lightcone(\theta_1,\theta_2)$ are locally finite.  Fix $\epsilon > 0$ and let $P_1,\ldots,P_n$ be the pockets of~$\lightcone(\theta_1,\theta_2)$ which have diameter at least~$\epsilon$.  For each~$j$, we let $\openL_j = \open{P_j}$ (resp.\ $\closeL_j = \close{P_j}$) be the opening (resp.\ closing) point of~$P_j$.  Fix $\delta \in (0,\epsilon)$ and, for each~$j$, let~$x_j^1$ (resp.\ $x_j^2$) be a point on~$\side{1}{P_j}$ (resp.\ $\side{2}{P_j}$) with distance at most~$\delta$ from~$x_j$.  Let~$\eta_{j,n}^1$ (resp.\ $\eta_{j,n}^2$) be the flow line of~$h$ starting from~$x_j^1$ (resp.\ $x_j^2$) with angle~$\theta_n^1$ (resp.\ $\theta_n^2$). As $n \to \infty$, these paths stopped upon exiting~$\ol{P}_j$ almost surely converge in the Hausdorff topology to the segments of~$\side{1}{P_j}$ and~$\side{2}{P_j}$ which start from~$x_j^1$ and~$x_j^2$, respectively, and terminate at~$\closeL_j$.  Indeed, this follows for~$\eta_{j,n}^1$ because it is an $\SLE_\kappa(\rho^L;\rho_1^R,\rho_2^R)$ process in~$P_j$ with force points located at $(x_j^1)^-$, $(x_j^1)^+$, and~$x_j^2$ and $\rho^L \downarrow -2$ as $n \to \infty$.  This follows for~$\eta_{j,n}^2$ for an analogous reason.

Fix $\delta_1 \in (\delta,\epsilon)$; we will shortly send $\delta \downarrow 0$ while leaving~$\delta_1$ and~$\epsilon$ fixed.  For each~$n$, let~$x_{n,j}^1$ be a point on~$\eta_{n,j}^1$ which has distance~$\delta_1$ from~$\openL_j$ and let~$\wt{\eta}_{n,j}^1$ be the flow line of~$h$ starting from~$x_{n,j}^1$ with angle $\theta_n^1-\pi$ (the angle dual to that of~$\eta_{n,j}^1$).  We define~$x_{n,j}^2$ and~$\wt{\eta}_{n,j}^2$ similarly (the angle of~$\wt{\eta}_{n,j}^2$ is $\theta_n^2+\pi$).  Let~$C_j$ be the component of $P_j \setminus (\eta_{n,j}^1 \cup \eta_{n,j}^2)$ which $\wt{\eta}_{n,j}^1,\wt{\eta}_{n,j}^2$ enter immediately upon getting started (there exists such a component with probability tending to~$1$ as $n \to \infty$).  Then the joint law of~$\wt{\eta}_{n,j}^1, \wt{\eta}_{n,j}^2$ in~$C_j$ stopped upon hitting $B(\openL_j,2\delta)$ is absolutely continuous with respect to that of the pair of paths~$(\wh{\eta}_{n,j}^1,\wh{\eta}_{n,j}^2)$ which are distributed as in the case that the boundary data along~$C_j$ takes the same form as if it were a pocket of a light cone with angle range $[\theta_n^1,\theta_n^2]$ and with opening and closing points~$\openL_j$ and~$\closeL_j$, respectively.  Moreover, by \cite[Lemma~2.1]{LIGHTCONE_DIMENSION}, the Radon-Nikodym derivative is bounded from above and below by universal finite and positive constants which do not depend on~$n$.  By the flow line interaction rules \cite[Theorem~1.5]{MS_IMAG}, $\wh{\eta}_{n,j}^1,\wh{\eta}_{n,j}^2$ almost surely intersect before exiting~$C_j$.  Consequently, sending first $n \to \infty$, then $\delta \downarrow 0$, we see that the probability that $\wt{\eta}_{n,j}^1$ intersects $\wt{\eta}_{n,j}^2$ before hitting $B(\openL_j,\delta)$ tends to~$1$.  Moreover, the diameter of the paths up until intersecting almost surely tends to zero upon taking another limit as $\delta_1 \downarrow 0$.  The desired result follows because the pocket of~$\lightcone(\theta_n^1,\theta_n^2)$ which contains~$z$ is contained in~$\pocket{z}$ and contains the component of $C_j \setminus (\wt{\eta}_{n,j}^1 \cup \wt{\eta}_{n,j}^2)$ containing~$z$ on the event that~$\wt{\eta}_{n,j}^1$ and~$\wt{\eta}_{n,j}^2$ intersect before leaving $B(x_j,2\delta)$ provided~$\delta$ is small enough.  See the caption of Figure~\ref{fig::lightcone_continuous} for further explanation of this final point.
\end{proof}

\begin{proposition}
\label{prop::pocket_boundaries_continuous}
Suppose that we have the same setup as in Lemma~\ref{lem::locally_finite} and that $\theta_1 \leq \theta_2$ are angles with $\theta_2 - \theta_1 < \theta_c$ and $\theta_2 - \theta_1 \leq \pi$.  Let $(\theta_n^1)$, $(\theta_n^2)$ be sequences of angles with $\theta_n^1 \leq \theta_n^2$ and $\theta_n^2 -\theta_n^1 \leq \pi$ for all $n \in \N$ such that $\theta_n^j \to \theta_j$ as $n \to \infty$ for $j=1,2$.  For each $z \in \D$ and $n \in \N$, let $\eta_j^n(z)$ be the flow line which forms the $\theta_j^n$-angle boundary of the pocket of $\lightcone(\theta_n^1,\theta_n^2)$ which contains $z$.  Then $\eta_j^n(z) \to \sideflow{j}{z}$ for $j=1,2$ almost surely as $n \to \infty$ with respect to the uniform topology modulo parameterization.
\end{proposition}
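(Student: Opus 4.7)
The plan is to combine the Hausdorff-type convergence of pockets from Proposition~\ref{prop::lightcone_continuous} with the continuity of GFF flow lines in their starting point and angle, in order to identify $\lim_n \eta_j^n(z)$ with $\sideflow{j}{z}$. Applying Lemma~\ref{lem::form_pockets} to $\lightcone(\theta_n^1,\theta_n^2)$, the pocket $P_n$ of this light cone containing $z$ has opening and closing points $x_n, y_n$, and $\eta_j^n(z)$ is precisely the flow line of $h$ starting from $x_n$ with angle $\theta_j^n$, stopped upon reaching $y_n$; this identifies $\eta_j^n(z)$ as a measurable function of $h$, the angle $\theta_j^n$, and the pair $(x_n, y_n)$.

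The first step is to show $x_n \to \open{z}$ and $y_n \to \close{z}$ almost surely. This is essentially a by-product of the sandwich argument in the proof of Proposition~\ref{prop::lightcone_continuous}: the dual paths $\wt{\eta}_{n,j}^1, \wt{\eta}_{n,j}^2$ produced there trap $P_n$ between an inner region that can be made arbitrarily close to $\pocket{z}$ and $\pocket{z}$ itself, which forces the unique marked boundary points of $P_n$ provided by Lemma~\ref{lem::form_pockets} to converge to those of $\pocket{z}$. Once this is in hand, I would invoke the continuity of GFF flow lines with respect to starting point and angle: for a fixed realization of $h$, if $w_n \to w$ in the interior of the domain and $\alpha_n \to \alpha$, then the flow line of $h$ from $w_n$ with angle $\alpha_n$ converges almost surely to the flow line from $w$ with angle $\alpha$ in the topology of uniform convergence modulo parameterization, a consequence of the a.s.\ determination of flow lines by the GFF and the continuity of SLE in its driving data developed in \cite{MS_IMAG4}. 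Applying this with $w_n = x_n$ and $\alpha_n = \theta_j^n$ identifies the limit as the flow line of $h$ from $\open{z}$ with angle $\theta_j$, which by Lemma~\ref{lem::form_pockets} and the definition of $\sideflow{j}{z}$ equals $\sideflow{j}{z}$; the convergence $y_n \to \close{z}$ ensures that the full paths converge, not just prefixes.

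The main obstacle will be the convergence of opening and closing points in a sufficiently strong sense. Hausdorff convergence of pockets does not a priori control how the approximating bounding flow lines traverse $\partial P_n$ near $x_n$ and $y_n$: in principle $y_n$ could be the first meeting point of the two angle-varying boundaries somewhere deep inside $\pocket{z}$, rather than near $\close{z}$. Ruling this out requires the sandwich structure produced by the dual paths $\wt{\eta}_{n,j}^1, \wt{\eta}_{n,j}^2$, together with the strict inequality $\theta_2 - \theta_1 < \theta_c$, which guarantees that these dual paths meet with probability tending to one as one sends first $n \to \infty$ and then the auxiliary parameter $\delta_1 \downarrow 0$ in the notation of the proof of Proposition~\ref{prop::lightcone_continuous}. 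Once the opening and closing points are pinned down, the passage from Hausdorff convergence to uniform convergence modulo parameterization for the flow lines themselves is comparatively routine, since the paths are simple ($\kappa < 4$) and are determined by the GFF together with their endpoint data.
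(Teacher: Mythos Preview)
Your proposal has a genuine gap in the second step. The continuity principle you invoke --- that for a fixed realization of $h$, the flow line depends continuously on its starting point and angle --- is false. Remark~\ref{rem::lightcone_not_continuous} makes exactly this point: for a fixed instance of $h$, the map taking an angle to the corresponding flow line is \emph{not} continuous (if it were, the fan of \cite{MS_IMAG} would have positive Lebesgue measure, whereas it has the dimension of a single $\SLE_\kappa$). No such joint continuity statement is proved in \cite{MS_IMAG4}, and ``continuity of $\SLE$ in its driving data'' yields at best Carath\'eodory convergence of hulls, not uniform convergence of curves. Even if you reinterpret the claim as an almost-sure statement for a \emph{deterministic} sequence $(w_n,\alpha_n)$, you cannot apply it here: the opening points $x_n$ are themselves random functionals of $h$, so you are not in the fixed-sequence setting.

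The paper's argument (declared to be ``the same'' as that of Proposition~\ref{prop::lightcone_continuous}) avoids this entirely. Instead of launching flow lines from the random $x_n$, one conditions on $P(z)$ and launches $\theta_n^j$-angle flow lines from \emph{fixed} points $x^1, x^2$ on $\side{1}{z}, \side{2}{z}$ near $\open{z}$. Conditionally on $P(z)$, such a path is an $\SLE_\kappa(\rho^L;\rho_1^R,\rho_2^R)$ in $P(z)$ with $\rho^L \downarrow -2$ (respectively $\rho^R \downarrow -2$) as $n\to\infty$, and this is what forces it to collapse onto the corresponding segment of $\side{j}{z}$. Since flow lines of the same angle merge upon meeting, these fixed-seed paths agree with $\eta_j^n(z)$ once they intersect, and the dual-path sandwich you correctly identified confines everything near $\open{z}$ to a region of small diameter. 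Your first step --- pinning down the opening and closing points via the sandwich --- is fine and is indeed part of the argument; it is the black-box continuity principle in your second step that does not exist.
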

\begin{proof}
This follows from the same argument used to prove Proposition~\ref{prop::lightcone_continuous}.
\end{proof}

\subsection{Explorations and continuity}
\label{subsec::explorations}

\begin{figure}[ht!]
\begin{center}
\includegraphics[scale=0.85,page=3]{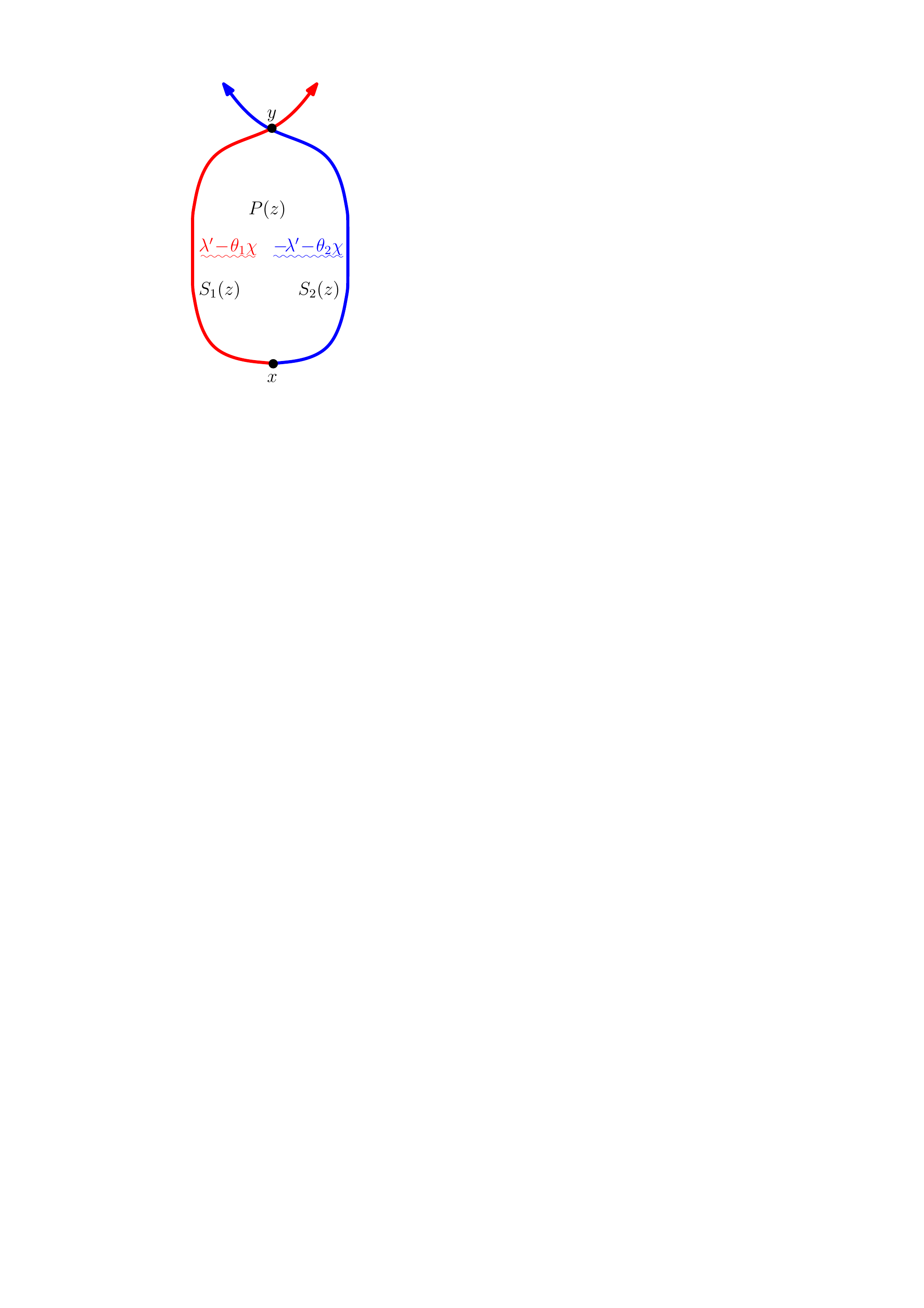}
\end{center}
\caption{\label{fig::lightcone_pocket2} (Continuation of Figure~\ref{fig::lightcone_pocket}.)  Shown is a second pocket $\pocket{w}$ of $\lightcone(\theta_1,\theta_2)$.  If the $\theta_2$-angle flow line $\sideflow{2}{w}$ which generates $\side{2}{w}$ merges into the right side of the $\theta_2$-angle flow line $\sideflow{2}{z}$ which generates $\side{2}{z}$ of $\partial \pocket{z}$ as illustrated, then the counterflow line $\eta'$ visits (the interior of) $\pocket{w}$ before visiting (the interior of) $\pocket{z}$.  This determines (and is the same as) the order in which the trajectory we consider which explores $\lightcone(\theta_1,\theta_2)$ visits $\pocket{z}$ and $\pocket{w}$.  The same color scheme for the segments of $\eta'$ as it visits the points of $\partial \pocket{w}$ is used as in Figure~\ref{fig::lightcone_pocket}.
}
\end{figure}

We assume that $\theta_1 < \theta_2$ are angles with $\theta_2 - \theta_1 < \theta_c$ and $\theta_2 - \theta_1 \leq \pi$.  We also assume that the boundary data of~$h$ is such that the flow lines $\eta_1,\eta_2$ with angles $\theta_1,\theta_2$ starting from~$-i$, respectively, almost surely reach~$i$ before hitting the continuation threshold.  Let~$\eta'$ be the counterflow line of $h + (\theta_2-\tfrac{\pi}{2})\chi$ starting from~$i$ and targeted at~$-i$.  By \cite[Theorem~1.4]{MS_IMAG}, the left boundary of~$\eta'$ stopped upon hitting any point is equal to the flow line of angle~$\theta_2$ starting from that point.  We will use the path~$\eta'$ to order the points on~$\lightcone(\theta)$ then use the continuity of~$\eta'$ to show that there exists a continuous, non-crossing path whose range is equal to~$\lightcone(\theta_1,\theta_2)$ and which visits the points of~$\lightcone(\theta_1,\theta_2)$ in this order.  We will then show that the path has a continuous chordal Loewner driving function and, in certain special cases, yields a local set for~$h$ when drawn up to any stopping time.  In the next section, we will use these facts to complete the proof of Theorem~\ref{thm::coupling} by showing that the corresponding path (in a slightly modified setup) evolves as the appropriate~$\SLE_\kappa(\rho)$ process and is coupled with and determined by the field in the desired manner.  This will also give Theorem~\ref{thm::continuous}.  The path which traverses~$\lightcone(\theta_1,\theta_2)$ is constructed in the following manner.
\begin{enumerate}
\item Suppose that $z,w \in \D$ are distinct.  We say that~$\pocket{z}$ comes before~$\pocket{w}$ if~$\eta'$ visits~$\open{z}$ before~$\open{w}$.  Equivalently, $\pocket{z}$ comes before~$\pocket{w}$ if the flow line of~$h$ starting from~$\open{z}$ with angle~$\theta_2$ merges with the flow line of angle~$\theta_2$ starting from~$\open{w}$ on its right side.
\item We take~$\eta$ to be the concatenation of the paths~$\sideflow{1}{z}$ using the same ordering as for the pockets~$\pocket{z}$.
\end{enumerate}

We will now use the continuity of~$\eta'$ to deduce the continuity of~$\eta$.

\begin{lemma}
\label{lem::continuous}
The trajectory~$\eta$ from~$i$ to~$-i$ in~$\ol{\D}$ described above is almost surely continuous.
\end{lemma}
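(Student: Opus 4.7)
The plan is to use the counterflow line $\eta'$ both to parameterize $\eta$ and to control its oscillations, with continuity of $\eta$ then following from continuity of $\eta'$ together with the local finiteness of pockets established in Lemma~\ref{lem::locally_finite}.

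First, for each pocket $P$ of $\lightcone(\theta_1,\theta_2)$, let $I_P$ be the closed $\eta'$-time interval during which $\eta'$ traces $\side{1}{P}$ in reverse (from $\close{P}$ back to $\open{P}$), as described in item~(3) of the discussion preceding Lemma~\ref{lem::locally_finite}. These intervals are pairwise disjoint and indexed by the countable set of pockets. I would parameterize $\eta$ by declaring that on each $I_P$, $\eta$ traces $\sideflow{1}{P}$ forward from $\open{P}$ to $\close{P}$ at constant capacity speed. On the complement of $\bigcup_P I_P$ inside the total $\eta'$-time interval, I would extend $\eta$ by continuity, using that at such times $\eta'$ is either visiting the outer boundary $\eta_1 \cup \eta_2$ of $\lightcone(\theta_1,\theta_2)$, the interior of some pocket, or the right side of some $\side{2}{P}$; the corresponding limiting value of $\eta$ is read off from neighboring intervals $I_{P'}$.

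The main step is to verify continuity. Within each $I_P$ continuity is immediate from continuity of the flow line $\sideflow{1}{P}$. At the endpoints of $I_P$, the values $\open{P}$ and $\close{P}$ coincide with the values assigned to $\eta$ on adjacent portions of the time axis, by continuity of $\eta'$ through $\open{P}$ and $\close{P}$. The delicate case is a time $t^\ast$ that is an accumulation point of distinct intervals $I_{P_n}$, with the $P_n$ themselves possibly converging in the Hausdorff sense to some limiting set. Here I would apply Lemma~\ref{lem::locally_finite}: for any $\delta > 0$, only finitely many pockets $P$ satisfy $\diam(P) \geq \delta$; hence all but finitely many $P_n$ satisfy $\eta(I_{P_n}) = \sideflow{1}{P_n} \subseteq \ol{P_n}$ with $\diam(\eta(I_{P_n})) < \delta$. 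Combined with continuity of $\eta'$, which bounds the displacement of the points $\eta'(s)$ for $s$ in a small neighborhood of $t^\ast$, this shows that the oscillation of $\eta$ near $t^\ast$ is at most $\delta$ plus the oscillation of $\eta'$ on that neighborhood. Letting first the neighborhood shrink and then $\delta \downarrow 0$ gives continuity at $t^\ast$; the finitely many pockets of diameter $\geq \delta$ are handled individually using continuity of each $\sideflow{1}{P_j}$.

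The main obstacle I anticipate is the verification that the extension of $\eta$ to the complement of $\bigcup_P I_P$ is well defined and consistent. This is essentially a topological statement: one needs that as a sequence of pockets $P_n$ accumulates in the Hausdorff sense to a point or to a boundary arc, their clockwise boundaries $\sideflow{1}{P_n}$ accumulate to the same limit set. Lemma~\ref{lem::locally_finite} together with the pocket boundary data description from Lemma~\ref{lem::form_pockets} rules out the bad scenario of many disjoint small pocket boundaries obstructing the extension, leaving a routine check of the topological matching.
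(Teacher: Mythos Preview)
Your approach has a genuine gap in the choice of parameterizing intervals. You take $I_P$ to be the $\eta'$-time interval during which $\eta'$ traces $\side{1}{P}$ in reverse (item~(3) of the description before Lemma~\ref{lem::locally_finite}), and you assert that these intervals are pairwise disjoint. They are not. During that phase $\eta'$ makes excursions \emph{out} of $P$ (each such excursion starts and ends at the same point of $\side{1}{P}$; see the discussion after Proposition~\ref{prop::ordering_local}), and inside such an excursion $\eta'$ can hit $\open{Q}$ for some other pocket $Q$, enter $Q$, reach $\close{Q}$, and then trace $\side{1}{Q}$ in reverse. Thus $I_Q \subset I_P$, so your definition of $\eta$ on $I_P$ and on $I_Q$ conflict. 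There is also a direction mismatch: at the left endpoint of $I_P$ the path $\eta'$ is at $\close{P}$, while you want $\eta$ to be at $\open{P}$ there, so even ignoring the nesting issue the extension by continuity from neighboring intervals would not match up.

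The paper avoids both problems by using instead the interval during which $\eta'$ travels from $\open{P}$ to $\close{P}$ \emph{inside} $\ol{P}$ (item~(2) of the description). Those intervals genuinely are pairwise disjoint, since $\eta'$ is confined to $\ol{P}$ during that time and distinct pockets are disjoint; moreover the endpoints $\open{P}$, $\close{P}$ are visited in the same order by $\eta'$ as by $\sideflow{1}{P}$, so the replacement is continuous at the endpoints. The paper first excises the into-$P$ excursions of $\eta'$ rooted on $\side{1}{P}$ (these are bubbles, so this preserves continuity), and only then swaps the $\open{P}\to\close{P}$ segments for $\sideflow{1}{P}$ one pocket at a time, using local finiteness (Lemma~\ref{lem::locally_finite}) to get a uniform Cauchy sequence. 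Your intuition about using local finiteness to control oscillations at accumulation times is correct and is exactly how the paper closes the argument, but it only works once the parameterizing intervals are chosen correctly.
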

\begin{proof}
Let $\eta_1, \eta_2$ be the flow lines of~$h$ starting from~$-i$ with angles $\theta_1,\theta_2$, respectively, as before, and let~$\eta'$ be the counterflow line of $h+(\theta_2-\tfrac{\pi}{2})\chi$ starting from~$i$ and targeted at~$-i$.  From \cite[Theorem~1.3]{MS_IMAG}, we know that~$\eta'$ is almost surely continuous.  We are going to prove the continuity of~$\eta$ in two steps.  First, we will construct an intermediate path by starting with~$\eta'$ and then excising the excursions that it makes into~$\lightcone(\theta_1,\theta_2)$.  Second, we will modify this intermediate path to get~$\eta$.

Let $I = [0,\infty) \setminus (\eta')^{-1}(\lightcone(\theta_1,\theta_2))$.  Since~$\eta'$ is continuous, $I \subseteq [0,\infty)$ is open, hence we can write $I = \cup_j (s_j,t_j)$ as a countable, disjoint union of open intervals.  Note that for each~$j$ there exists $z \in \D$ such that $\eta'((s_j,t_j)) \subseteq \pocket{z}$.  Suppose that $\eta'(s_j) \in \side{1}{z}$.  Since~$\side{1}{z}$ is contained in the range of~$\eta'$ and~$\eta'$ visits of the points of~$\side{1}{z}$ in the reverse chronological order in which they are drawn by~$\sideflow{1}{z}$, it must be that $\eta'(s_j)=\eta'(t_j)$.  Consequently, letting $\wt{\eta}|_{[0,\infty) \setminus I} = \eta'|_{[0,\infty) \setminus I}$ and $\wt{\eta}|_{(s_j,t_j)} = \eta'(s_j) = \eta'(t_j)$ for each $j \in \N$ such that $\eta'(s_j) \in \side{1}{z}$, we see that~$\wt{\eta}$ is almost surely continuous.  Note that after filling the right side of~$\side{2}{z}$ for a pocket~$\pocket{z}$ and then after hitting~$\open{z}$ for the first time,~$\wt{\eta}$ travels inside~$\pocket{z}$ starting from~$\open{z}$ until reaching~$\close{z}$ while bouncing off the left side of~$\side{2}{z}$ and does not hit the right side of~$\side{1}{z}$.  The amount of time that this takes is equal to the amount of time it takes~$\eta'$ to travel from~$\open{z}$ to~$\close{z}$.  Next, $\wt{\eta}$ fills~$\side{1}{z}$ until reaching~$\open{z}$.  While filling~$\side{1}{z}$, it makes excursions out of~$\pocket{z}$ but never into (the interior of) $\pocket{z}$.

Recall from Lemma~\ref{lem::locally_finite} that the pockets of~$\lightcone(\theta_1,\theta_2)$ are almost surely locally finite.  Let $(P_n)$ be an ordering of the pockets of~$\lightcone(\theta_1,\theta_2)$ such that $\diam(P_n) \geq \diam(P_{n+1})$ for all~$n$.  (For example, we can order the the pockets by diameter and then break ties using a fixed ordering of the rationals.)  For each~$j$, we let~$\wt{\eta}_j$ be the path which agrees with~$\wt{\eta}$ in~$P_m$ for $m \geq j+1$ and, for $1 \leq m \leq j$, follows~$\sideflow{1}{P_m}$ rather than~$\wt{\eta}$ while traveling from~$\open{z}$ to~$\close{z}$ (but in the same interval of time).  The local finiteness of the $(P_j)$ implies that the sequence $(\wt{\eta}_j)$ is Cauchy with respect to the uniform topology.  Therefore the sequence $(\wt{\eta}_j)$ has a continuous limit~$\wh{\eta}$.

To complete the proof, we are going to argue that~$\wh{\eta}$ is the same as~$\eta$.  We begin by reparameterizing~$\wh{\eta}$ by excising those intervals of time which correspond to the excursions that~$\eta'$ makes into pockets of~$\lightcone(\theta_1,\theta_2)$ starting from~$\side{1}{P}$ for a pocket~$P$.  We do not change the time in which~$\wh{\eta}$ is drawing the boundaries~$\side{1}{P}$ themselves.  By the continuity of~$\eta'$, it is easy to see that this reparameterization is continuous (the set of these excursions is locally finite).  Moreover, the set of times that~$\wh{\eta}$ is drawing the boundaries~$\side{1}{P}$ has full Lebesgue measure and, in particular, is dense.  This proves that it~$\eta$ can be reparameterized so that it extends continuously off the intervals of time in which it is drawing the~$\theta_1$-angle boundaries, which proves the desired result.
\end{proof}

\begin{lemma}
\label{lem::continuous_loewner}
The path~$\eta$ from Lemma~\ref{lem::continuous} has a continuous chordal Loewner driving function.
\end{lemma}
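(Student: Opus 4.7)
To prove that $\eta$ has a continuous chordal Loewner driving function, I will verify the standard criterion which says that a continuous curve $\eta\colon [0,T]\to \overline{\D}$ from $i$ to $-i$ admits a continuous driving function provided that, letting $K_t$ denote the closure of the complement of the $(-i)$-containing component of $\D\setminus\eta([0,t])$ and $g_t$ the corresponding (suitably normalized) family of conformal maps $\D\setminus K_t\to \D$, for every $t\in[0,T)$ and every $\epsilon>0$ there exists $\delta>0$ such that the diameter of $g_t(K_{t+\delta}\setminus K_t)$ in $\overline{\D}$ is at most $\epsilon$. Combined with the continuity of $\eta$ from Lemma~\ref{lem::continuous}, this local hull growth criterion will produce the desired continuous driving function.

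The first step is to verify the criterion at times $t$ in the interior of a time interval during which $\eta$ is traversing the $\theta_1$-angle flow line $\sideflow{1}{P}$ of a single pocket $P$. By Lemma~\ref{lem::form_pockets}, conditioned on $\lightcone(\theta_1,\theta_2)$ and on the event that it disconnects $P$ from $\partial\D$, the curve $\sideflow{1}{P}$ is an ordinary flow line of the GFF inside $P$ whose conditional boundary data falls into the class treated by \cite{MS_IMAG}. That theory gives that $\sideflow{1}{P}$ is an $\SLE_\kappa(\ul{\rho})$ process with $\ul{\rho}$-values exceeding $-2$, hence almost surely a continuous curve with a continuous driving function, so the local growth bound holds for all such $t$.

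The second step is to handle the transition times, namely those at which $\eta(t)$ equals $\close{P}$ for a pocket $P$ just completed, or $\open{P'}$ for the next pocket $P'$, as well as accumulation points of such transitions coming from a cascade of small pockets. Here I would use two ingredients: the local finiteness of pockets from Lemma~\ref{lem::locally_finite}, and the uniform continuity of the counterflow line $\eta'$ on $\overline{\D}$. Since the $\eta$-ordering of pockets agrees with the $\eta'$-opening-point order, a collection of many pockets that are adjacent in the $\eta$-ordering corresponds to a short $\eta'$-time interval. By uniform continuity of $\eta'$, all of these opening points, and by Proposition~\ref{prop::pocket_boundaries_continuous} the corresponding pocket boundaries themselves, cluster near $\eta(t)$ in the Hausdorff metric. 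Standard conformal distortion estimates applied to $g_t$ would then give the required bound on $\diam(g_t(K_{t+\delta}\setminus K_t))$.

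The main obstacle will be in this last step: one has to rule out that $g_t$ blows up an accumulating thicket of small pockets into a macroscopic set in $\overline{\D}$, which could happen if the cluster sits in a direction carrying large harmonic measure in $\D\setminus K_t$ as seen from $-i$. To control this, I would combine a Beurling-type estimate with the fact that the whole accumulating cluster is trapped inside a small Euclidean neighborhood of $\eta(t)$ and, by construction, lies on one specific side of the already-drawn part of $\eta$, which provides a harmonic barrier; this, together with Proposition~\ref{prop::pocket_boundaries_continuous} applied to control the approach of the flow line boundaries themselves, should suffice to close the estimate and verify the criterion uniformly.
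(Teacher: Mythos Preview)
Your approach differs substantially from the paper's. You try to verify the local-hull-growth criterion directly, splitting into times interior to a pocket traversal and transition/accumulation times. The paper instead invokes \cite[Proposition~6.12]{MS_IMAG}, which reduces the question to two elementary checks after mapping to $\h$: first, that $\eta$ is continuous and $\eta(t)$ always lies on the boundary of the unbounded complementary component (immediate from Lemma~\ref{lem::continuous} and the way $\eta$ was built from $\eta'$), and second, that $\eta$ does not trace itself or $\partial\h$. The latter holds because, in the parameterization from the end of the proof of Lemma~\ref{lem::continuous}, $\eta$ spends Lebesgue-a.e.\ time drawing the $\theta_1$-boundary of some pocket, during which it meets neither its past nor $\partial\h$ except at the opening and closing points. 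This sidesteps entirely the conformal-distortion issue you flag.

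Your second step has a genuine gap. You invoke Proposition~\ref{prop::pocket_boundaries_continuous} twice, but that result concerns convergence of pocket boundaries as the \emph{angles} $\theta_n^j\to\theta_j$ vary; it says nothing about how pockets for a fixed angle pair accumulate near a transition time. The clustering you actually need comes from Lemma~\ref{lem::locally_finite} and the continuity of $\eta$, not from Proposition~\ref{prop::pocket_boundaries_continuous}. More seriously, the ``Beurling-plus-harmonic-barrier'' sketch you offer to rule out blow-up under $g_t$ is not a routine estimate here: at an accumulation time the already-drawn portion of $\eta$ near $\eta(t)$ can itself be fractal, and it is unclear what ``one specific side'' means or why it yields a usable barrier. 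Making this rigorous would essentially amount to reproving \cite[Proposition~6.12]{MS_IMAG} in this special case; citing that proposition, as the paper does, is both shorter and complete.
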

\begin{proof}
We will prove the result using \cite[Proposition~6.12]{MS_IMAG}.  We first apply a conformal change of coordinates $\D \to \h$ which sends~$i$ to~$0$ and~$-i$ to~$\infty$ so that we may assume without loss of generality that we are working on~$\h$.  That the first criterion from \cite[Proposition~6.12]{MS_IMAG} is satisfied by~$\eta$ follows from Lemma~\ref{lem::continuous} and the way that we have constructed~$\eta$ from~$\eta'$.  We will now check the second criterion.  That is,~$\eta$ almost surely does not trace itself or~$\partial \h$.  If we parameterize~$\eta$ as in the end of the proof of Lemma~\ref{lem::continuous}, then we know that it spends Lebesgue almost all of its time drawing the~$\theta_1$-angle boundaries of the pockets of~$\lightcone(\theta_1,\theta_2)$.  When drawing such a boundary,~$\eta$ does not hit the past of its range except at the opening and closing points of the corresponding pockets.  Moreover, it also cannot trace the domain boundaries in these intervals.  Consequently, the claimed result follows.
\end{proof}

We are next going to argue that the path~$\eta$ together with the left and right boundaries~$\eta_2$ and~$\eta_1$, respectively, of~$\lightcone(\theta_1,\theta_2)$ is local (in the sense of \cite{SchrammShe10}) for and almost surely determined by~$h$.

\begin{proposition}
\label{prop::ordering_local}
For each $t \geq 0$, let~$\CF_t$ be the $\sigma$-algebra generated by~$\eta|_{[0,t]}$ and the left and right boundaries~$\eta_2$ and~$\eta_1$, respectively, of~$\lightcone(\theta_1,\theta_2)$.  For each $(\CF_t)$-stopping time~$\tau$, $\eta([0,\tau]) \cup \eta_1 \cup \eta_2$ is a local set for and almost surely determined by~$h$.
\end{proposition}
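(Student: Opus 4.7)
The plan is to express the set $\eta([0,\tau])\cup\eta_1\cup\eta_2$ as a monotone countable union of flow-line arcs, each of which is known from the imaginary geometry theory of \cite{MS_IMAG,MS_IMAG4} to be a local set almost surely determined by $h$, and then appeal to the standard local-set calculus of \cite{SchrammShe10} to pass this structure through the limits needed to reach the stopping time $\tau$. First I would verify that $\eta_1\cup\eta_2$, and more generally $\lightcone(\theta_1,\theta_2)$, are local and a.s.\ determined by $h$. Each is a (countable) union of angle-varying flow lines of $h$ with angles in $\{\theta_1,\theta_2\}$ and only finitely many changes occurring at rational times; each such angle-varying flow line is local and a.s.\ determined by $h$ by \cite[Theorem~1.2]{MS_IMAG}, and countable unions of sets with these properties retain them. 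In particular, the pocket decomposition of $\lightcone(\theta_1,\theta_2)$ and the boundary flow lines $\sideflow{1}{P}$, $\sideflow{2}{P}$ of each pocket $P$ are functions of $h$.

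Next I would incorporate the ordering used to define $\eta$. The counterflow line $\eta'$ of $h+(\theta_2-\tfrac{\pi}{2})\chi$ from $i$ is local and a.s.\ determined by $h$, and the order in which $\eta'$ hits the opening points $\open{P}$ is the order in which $\eta$ traverses the pockets $P$. Combined with the previous step, this shows that $\eta$, and hence $\eta([0,\tau])$, is a.s.\ determined by $h$ for every $(\CF_t)$-stopping time $\tau$. For the local-set property at a \emph{deterministic} time $t$, observe that by Lemma~\ref{lem::continuous} and the construction, $\eta([0,t])\cup\eta_1\cup\eta_2$ consists of $\eta_1\cup\eta_2$ together with the completed boundary arcs $\sideflow{1}{P_1},\ldots,\sideflow{1}{P_k}$ of the finitely many pockets finished by time $t$ and a single partial draw of $\sideflow{1}{P^*}$ inside the current pocket $P^*$. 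By Lemma~\ref{lem::form_pockets}, conditionally on $\partial P^*$ the partial draw is an ordinary boundary-emanating $\theta_1$-angle flow line of the restricted GFF run up to a stopping time of its own filtration, hence local. Assembling the contributions from the various pockets via the local-set union theorem from \cite{SchrammShe10} gives the claim at deterministic $t$.

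Finally, to pass from deterministic $t$ to a $(\CF_t)$-stopping time $\tau$, I would approximate $\tau$ from above by the dyadic stopping times $\tau_n=2^{-n}\lceil 2^n\tau\rceil$, each of which takes countably many values. On each event $\{\tau_n=k2^{-n}\}$ the set $\eta([0,\tau_n])\cup\eta_1\cup\eta_2$ coincides with the local set constructed at the deterministic time $k2^{-n}$, so $\eta([0,\tau_n])\cup\eta_1\cup\eta_2$ is itself local; continuity of $\eta$ from Lemma~\ref{lem::continuous} identifies the decreasing intersection $\bigcap_n(\eta([0,\tau_n])\cup\eta_1\cup\eta_2)$ with $\eta([0,\tau])\cup\eta_1\cup\eta_2$, and locality passes to this intersection by the closure properties in \cite{SchrammShe10}. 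I expect the main obstacle to be the careful identification of the partial arc inside the current pocket as a local set at a non-deterministic time; the plan to handle this is to condition on $\partial P^*$ (whose boundary data is supplied by Lemma~\ref{lem::form_pockets}) and reduce to the boundary-emanating flow line case treated in \cite[Theorem~1.2]{MS_IMAG}.
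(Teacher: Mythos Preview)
Your argument that $\eta([0,\tau])\cup\eta_1\cup\eta_2$ is almost surely determined by $h$ is fine, but the locality argument has a genuine gap. First, a factual slip: by any positive time $t$ the path $\eta$ has traversed \emph{infinitely} many pocket arcs, not finitely many (local finiteness only bounds the number of pockets above any fixed diameter). More importantly, the ``local-set union theorem'' you invoke applies to a union of fixed local sets $A_1,A_2,\ldots$; it does \emph{not} say that a randomly selected sub-union $\bigcup_{j\in J}A_j$ is local when the index set $J$ is a non-local function of $h$. Here the index set --- which pockets are completed by time $t$, which pocket $P^*$ is current, and how far along $\sideflow{1}{P^*}$ the path has progressed --- is governed by the counterflow line $\eta'$. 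But the relevant timing information depends on how long $\eta'$ spends inside each already-visited pocket, hence on $h$ in the \emph{interiors} of those pockets, and also on the merging pattern of the $\theta_2$-angle arcs $\sideflow{2}{P}$, none of which lie in $\eta([0,t])\cup\eta_1\cup\eta_2$. So the selection is not determined by $h$ in a neighborhood of the set, and your assembly step does not go through. The difficulty you anticipated with the partial arc in $P^*$ is a symptom of this, but conditioning on $\partial P^*$ does not fix it: identifying $P^*$ already uses the non-local ordering.

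The paper circumvents this by constructing, for each $\epsilon>0$, an explicit exploration $\eta_{\epsilon,j}'$ built from $\eta'$ together with $\theta_1$-angle flow lines launched from its tip, designed so that every decision the exploration makes at time $t$ depends only on $h$ inside the $\epsilon$-neighborhood of the path drawn so far; this is exactly what is verified in Lemma~\ref{lem::epsilon_path_properties}\eqref{it::epsilon_neighborhood_local} using the characterization of local sets from \cite[Lemma~3.9]{SchrammShe10}. One then shows these $\epsilon$-explorations (restricted between $\eta_1,\eta_2$) converge to $\eta$ as $\epsilon\to 0$, giving locality at rational times and then at stopping times via the same characterization. The essential new idea is to replace the global ordering coming from $\eta'$ by a local rule (``form a pocket whenever a $\theta_1$-flow line from the tip closes one of diameter $\ge\epsilon$''), which is what your outline is missing.
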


Let~$\eta'$ be the counterflow line of $h+(\theta_2-\tfrac{\pi}{2})\chi$ starting from~$i$ and targeted at~$-i$.  Then the left boundary of~$\eta'$ stopped upon hitting a point~$z$ is equal to the flow line starting from~$z$ with angle~$\theta_2$.  To prove Proposition~\ref{prop::ordering_local}, we are going to describe a ``local'' construction of~$\eta$ from~$\eta'$ (one which will only require us first to observe the left and right boundaries~$\eta_1$ and~$\eta_2$, respectively, of~$\lightcone(\theta_1,\theta_2)$ but not all of~$\lightcone(\theta_1,\theta_2)$).  We begin by using~$\eta'$ to define paths as follows.  Fix~$\epsilon > 0$.  Let~$\tau_{\epsilon,1}$ be the first time~$t$ that there exists a flow line~$\eta_{\epsilon,1}^R$ of~$h$ with angle~$\theta_1$ starting from~$\eta'(t)$ and which crosses into~$\eta'([0,t])$ on its left side (i.e., the part of the outer boundary of~$\eta'([0,t])$ which is described by a flow line of angle~$\theta_2$ starting from~$\eta'(t)$) such that the following is true: the pocket formed by the left side of~$\eta'([0,t])$ and the range of this path drawn up until crossing into the left side of~$\eta'([0,t])$ has diameter at least~$\epsilon$.  (Throughout, we shall write~$\eta_{\epsilon,1}^R$ to mean the path stopped at the time of first hitting the left side of~$\eta'([0,\tau_{\epsilon,1}])$.)  Note that the pocket will have diameter at least~$\epsilon$ if either:
\begin{enumerate}
\item $\eta_{\epsilon,1}^R$ has diameter at least~$\epsilon$ or
\item $\eta_{\epsilon,1}^R$ has diameter less than~$\epsilon$ hence closes the pocket before leaving the $\epsilon$-neighborhood of $\eta'([0,\tau_{\epsilon,1}])$.
\end{enumerate}
In particular, each of the two possibilities can be determined by observing the values of~$h$ in an $\epsilon$-neighborhood of $\eta'([0,\tau_{\epsilon,1}])$.

We then let $\eta_{\epsilon,1}'$ be the path which agrees with $\eta'$ until time $\tau_{\epsilon,1}$ and then follows $\eta_{\epsilon,1}^R$ until hitting the left side of $\eta'([0,\tau_{\epsilon,1}])$.  Let $P_{\epsilon,1}$ be the pocket thus formed by $\eta_{\epsilon,1}^R$ and the left side of $\eta'([0,\tau_{\epsilon,1}])$.  Note that $\partial P_{\epsilon,1}$ consists of the right side $\eta_{\epsilon,1}^R$ and the left side of a flow line starting from $\eta'(\tau_{\epsilon,1})$ with angle $\theta_2$.  In other words, $\partial P_{\epsilon,1}$ has the same structure as a pocket of $\lightcone(\theta_1,\theta_2)$; recall Lemma~\ref{lem::form_pockets}.  We let $x_{\epsilon,1} = \eta'(\tau_{\epsilon,1})$ be the opening point of $P_{\epsilon,1}$ and let $y_{\epsilon,1}$ be the closing point of $P_{\epsilon,1}$.  Explicitly, $y_{\epsilon,1}$ is the point at which $\eta_{\epsilon,1}^R$ crosses into $\eta'([0,\tau_{\epsilon,1}])$.  Moreover, $\eta'|_{[\tau_{\epsilon,1},\infty)}$ interacts with $P_{\epsilon,1}$ in the same manner that $\eta'$ interacts with a pocket of $\lightcone(\theta_1,\theta_2)$ as described in Figure~\ref{fig::lightcone_pocket} and Figure~\ref{fig::lightcone_pocket2}.  In particular, $\eta'|_{[\tau_{\epsilon,1},\infty)}$ enters (the interior of) $P_{\epsilon,1}$ at $x_{\epsilon,1}$ and does not leave $\ol{P}_{\epsilon,1}$ or hit $\eta_{\epsilon,1}^R$ until hitting $y_{\epsilon,1}$ for the first time, say at time $\sigma_{\epsilon,1}$.  After hitting $y_{\epsilon,1}$ it visits the points on $\eta_{\epsilon,1}^R$ in the reverse order in which they are drawn by $\eta_{\epsilon,1}^R$.  In particular, $\eta'|_{[\sigma_{\epsilon,1},\infty)}$ makes excursions both into and out of $P_{\epsilon,1}$ and each such excursion starts and ends at the same point on $\eta_{\epsilon,1}^R$ (different excursions, however, are rooted at different points on $\eta_{\epsilon,1}^R$).  We take the part of $\eta_{\epsilon,1}'$ after it has finished drawing $\eta_{\epsilon,1}^R$ to be given by $\eta'|_{[\sigma_{\epsilon,1},\infty)}$ with those excursions of $\eta'$ from $\eta_{\epsilon,1}^R$ into $P_{\epsilon,1}$ excised (we leave the excursions out of $P_{\epsilon,1}$ alone).

Suppose that $k \geq 1$ and that paths $\eta_{\epsilon,1}'$, $\eta_{\epsilon,1}^R$, $\ldots$,$\eta_{\epsilon,k}'$, $\eta_{\epsilon,k}^R$, stopping times $\tau_{\epsilon,1}$, $\sigma_{\epsilon,1}$, $\ldots$, $\tau_{\epsilon,k}$, $\sigma_{\epsilon,k}$, and pockets $P_{\epsilon,1},\ldots,P_{\epsilon,k}$ with opening and closing points $x_{\epsilon,1},y_{\epsilon,1},\ldots,x_{\epsilon,k},y_{\epsilon,k}$ have been defined.  We then let $\tau_{\epsilon,k+1}$ be the first time $t$ after time $\sigma_{\epsilon,k}$ that there is a flow line $\eta_{\epsilon,k+1}^R$ of $h$ with angle $\theta_1$ starting from $\eta_{\epsilon,k}'(t)$ which crosses into the left side of $\eta_{\epsilon,k}'([0,t])$ such that the pocket thus formed has diameter at least $\epsilon$.  We then take $\eta_{\epsilon,k+1}'$ to be the path constructed from $\eta_{\epsilon,k}'$ in the same manner that we constructed $\eta_{\epsilon,1}'$ from $\eta'$ and let $\sigma_{\epsilon,k+1}$ (resp.\ $P_{\epsilon,k+1}$) be the corresponding stopping time (resp.\ pocket).  Finally, we let $x_{\epsilon,k+1}$ (resp.\ $y_{\epsilon,k+1}$) be the opening (resp.\ closing) point of $P_{\epsilon,k+1}$.

For each $\epsilon > 0$, we let $\CP_\epsilon(\theta_1,\theta_2)$ consist of those pockets of $\lightcone(\theta_1,\theta_2)$ which have diameter at least $\epsilon$; recall from Lemma~\ref{lem::continuous} that $\CP_\epsilon(\theta_1,\theta_2)$ is finite almost surely.  Let $\lightcone_\epsilon^R(\theta_1,\theta_2) = \{\side{1}{P} : P \in \CP_\epsilon(\theta_1,\theta_2)\}$.  Let $J_\epsilon = \sup\{j \geq 1 : \tau_{\epsilon,j} < \infty\}$ and let $\CR_\epsilon = \{ \eta_{\epsilon,j}^R : 1 \leq j \leq J_\epsilon\}$ consist of the $\theta_1$-angle boundary segments of the pockets $P_{\epsilon,j}$ (we will explain below that $J_\epsilon < \infty$ almost surely).

We are now going to collect several observations about the exploration procedure that we have just defined.

\begin{lemma}
\label{lem::epsilon_path_properties}
Fix $\epsilon > 0$.  The following are true.
\begin{enumerate}[(i)]
\item\label{it::epsilon_neighborhood_local} Suppose that $\zeta$ is a stopping time for $\eta_{\epsilon,j}'$.  Then the $\epsilon$-neighborhood of $\eta_{\epsilon,j}'([0,\zeta])$ is a local set for $h$.
\item\label{it::path_does_not_enter_pockets} For each $j \geq i$, almost surely $\eta_{\epsilon,j}'$ does not enter (the interior of) $P_{\epsilon,i}$.
\item\label{it::epsilon_pockets_finite} Almost surely, $J_\epsilon < \infty$.
\item\label{it::epsilon_pockets_contained} For each $ 1 \leq j \leq J_\epsilon$ such that $P_{\epsilon,j}$ lies between the left and right boundaries of $\lightcone(\theta_1,\theta_2)$ there almost surely exists $P \in \CP_\epsilon(\theta_1,\theta_2)$ such that $P_{\epsilon,j} \subseteq P$ and $\eta_{\epsilon,j}^R$ emanates from a point on $\side{2}{P}$.
\item\label{it::epsilon_pockets_lightcone} Almost surely, $\lightcone_\epsilon^R(\theta_1,\theta_2)$ is equal to the set which consists of those elements $\eta_{\epsilon,j}^R$ of $\CR_\epsilon$ for $1 \leq j \leq J_\epsilon$ which lie between the left and right boundaries of $\lightcone(\theta_1,\theta_2)$.
\end{enumerate}
\end{lemma}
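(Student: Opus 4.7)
My plan is to prove the five assertions in order, each leveraging its predecessors. For (i), I proceed by induction on $j$. The counterflow line $\eta'$ and each flow line $\eta_{\epsilon,k}^R$ are local sets for $h$ by the couplings of \cite{MS_IMAG,MS_IMAG4}, and the $\epsilon$-neighborhood of a local set is again local by the standard ``union of local sets'' criterion of \cite{SchrammShe10}. The crucial observation is that the defining condition for $\tau_{\epsilon,k+1}$---that some $\theta_1$-angle flow line from the current tip cuts off a pocket of diameter at least $\epsilon$ with the past of the path---depends only on $h$ restricted to the $\epsilon$-neighborhood of $\eta_{\epsilon,k}'([0,\tau_{\epsilon,k+1}])$, as noted explicitly in the text just before the lemma. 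Iterating, $\eta_{\epsilon,j}'([0,\zeta])$ is a finite union of local sets, and its $\epsilon$-neighborhood is local.

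For (ii), I induct on $j \geq i$. The base case $j = i$ holds directly from the construction of $\eta_{\epsilon,i}'$: on $[0,\tau_{\epsilon,i}]$ the pocket has not yet been formed; on the following segment the path traces $\eta_{\epsilon,i}^R$, which is the $\theta_1$-angle boundary of $P_{\epsilon,i}$; and after time $\sigma_{\epsilon,i}$ the excursions of $\eta'$ into $P_{\epsilon,i}$ are excised by definition. For $j > i$, the extension from $\eta_{\epsilon,j}'$ to $\eta_{\epsilon,j+1}'$ inserts a new $\theta_1$-angle flow line $\eta_{\epsilon,j+1}^R$ starting from the current tip, which lies outside $\ol{P}_{\epsilon,i}$ by the inductive hypothesis. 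By \cite[Theorem~1.7]{MS_IMAG4} this new flow line cannot cross $\eta_{\epsilon,i}^R$ (same-angle flow lines merge rather than cross) nor the $\theta_2$-angle boundary of $P_{\epsilon,i}$ from the outside (a $\theta_1$-angle flow line stays on the right of a $\theta_2$-angle flow line since $\theta_1 < \theta_2$), so it remains outside the interior of $P_{\epsilon,i}$. The subsequent counterflow dynamics avoid $P_{\epsilon,i}$ by the same excision argument applied inductively.

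For (iv), let $P_{\epsilon,j}$ lie strictly between $\eta_1$ and $\eta_2$. Its $\theta_2$-angle boundary is a segment of $\eta_{\epsilon,j-1}'$, which at that location coincides with a $\theta_2$-angle flow line; by merging of same-angle flow lines, this segment is contained in $\sideflow{2}{P}$ for some pocket $P$ of $\lightcone(\theta_1,\theta_2)$. The path $\eta_{\epsilon,j}^R$ is a $\theta_1$-angle flow line starting and ending on $\side{2}{P}$, so it must merge with $\sideflow{1}{P}$, yielding $P_{\epsilon,j} \subseteq P$; since $\diam(P_{\epsilon,j}) \geq \epsilon$, we have $P \in \CP_\epsilon(\theta_1,\theta_2)$. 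Part (iii) then follows: the $P_{\epsilon,j}$ falling between $\eta_1$ and $\eta_2$ are controlled by (iv) together with Lemma~\ref{lem::locally_finite}, and distinct such $P_{\epsilon,j}$ yield distinct $P$'s because by (ii) the exploration cannot produce a second diameter-$\geq\epsilon$ sub-pocket inside an already-discovered $P$; the remaining $P_{\epsilon,j}$ are cut off by $\eta_1$ or $\eta_2$ and are finite in number by continuity of those curves. For (v), combining (iv) with a direct analysis of how $\eta'$ traverses a pocket $P$ of $\lightcone(\theta_1,\theta_2)$ (see Figure~\ref{fig::lightcone_pocket}) shows that the diameter-$\epsilon$ criterion is first met when $\eta'$ arrives at $\open{P}$, at which moment the $\theta_1$-angle flow line from the tip is $\sideflow{1}{P}$ itself and the induced pocket equals $P$; hence $\eta_{\epsilon,j}^R = \side{1}{P}$, and conversely every $P \in \CP_\epsilon(\theta_1,\theta_2)$ is discovered because $\eta'$ reaches every $\open{P}$.

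I expect the main obstacle to be the equality $\eta_{\epsilon,j}^R = \sideflow{1}{P}$ in part (v), which requires ruling out a premature ``false discovery'' while $\eta'$ is still tracing the right side of $\sideflow{2}{P}$ before arriving at $\open{P}$. One must verify that during that interval any $\theta_1$-angle flow line from the tip either travels off into a different (ambient) pocket or closes only a pocket of diameter strictly less than $\epsilon$, using monotonicity of the light cone structure and the flow line interaction rules. Once this delicate point is settled, (iii) and (v) become clean corollaries of (iv), which itself is a straightforward application of flow line merging.
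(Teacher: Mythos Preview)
Your treatment of parts~(i) and~(ii) is essentially the paper's argument. The substantive divergence is in the logical ordering of (iii), (iv), and (v), and this is where there is a genuine gap.

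You prove (iv) first and then attempt to deduce (iii) from it: each $P_{\epsilon,j}$ between $\eta_1,\eta_2$ sits inside some $P\in\CP_\epsilon(\theta_1,\theta_2)$, there are finitely many such $P$ by Lemma~\ref{lem::locally_finite}, and you claim the map $P_{\epsilon,j}\mapsto P$ is injective ``because by (ii) the exploration cannot produce a second diameter-$\geq\epsilon$ sub-pocket inside an already-discovered $P$.'' But (ii) only says that $\eta_{\epsilon,j}'$ avoids the interior of the previously constructed \emph{exploration pockets} $P_{\epsilon,i}$; it says nothing about avoiding the rest of the containing light cone pocket $P\setminus \ol{P_{\epsilon,i}}$. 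If $P_{\epsilon,j_1}\subsetneq P$ strictly, nothing you have established prevents the exploration from later cutting off a second region $P_{\epsilon,j_2}\subseteq P\setminus \ol{P_{\epsilon,j_1}}$ of diameter at least $\epsilon$. Ruling this out amounts to showing $P_{\epsilon,j}=P$, which is precisely the reverse inclusion in (v) that you flag as the delicate point. So your derivation of (iii) is circular: it presupposes the equality in (v) that you have not yet proved.

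The paper avoids this trap by proving (iii) \emph{independently} of (iv) and (v). It forms the path $\wt{\eta}_{\epsilon,k}'$ obtained from $\eta'$ by excising its excursions into $P_{\epsilon,1},\ldots,P_{\epsilon,k}$, observes that these converge uniformly to a continuous non-self-crossing curve $\wt{\eta}_\epsilon'$ as $k\to\infty$, and then invokes local connectedness of the range of a continuous curve to conclude that the complement has only finitely many components of diameter at least $\epsilon$. This gives $J_\epsilon<\infty$ without any appeal to the light cone pocket structure, and only then are (iv) and (v) established. Your argument for (iv) via merging is plausible but looser than the paper's (which shows directly that angle-varying trajectories cannot enter $P_{\epsilon,j}$); the paper's proof of (v) also supplies the ordering contradiction (that a premature merge of $\eta_{\epsilon,j}^R$ into $\sideflow{1}{P}$ would force $\eta'$ to hit the left side of $\side{1}{P}$ before $\open{P}$) that your sketch leaves open.
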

\begin{proof}
To prove Part~\eqref{it::epsilon_neighborhood_local}, we will use the characterization of local sets given in the first part of \cite[Lemma~3.9]{SchrammShe10}.  We are first going to explain the proof in the case that $j=1$.  Fix $B \subseteq \D$ open and let $\tau_{\epsilon,B}$ be the first time $t$ that $\dist(\eta'(t),B) \leq \epsilon$.  Let $h_B$ the projection of $h$ onto the subspace of functions which are harmonic on $B$.  Then \cite[Theorem~1.2]{MS_IMAG} implies that $\eta'|_{[0,\tau_{\epsilon,B}]}$ is almost surely determined by $h_B$.  Note that the event $\tau_{\epsilon,1} \leq \tau_{\epsilon,B}$ is also almost surely determined by $h_B$ because the set of all flow lines with angle $\theta_1$ starting from points in $\D \setminus B$ and stopped upon exiting $\D \setminus B$ is (simultaneously) almost surely determined by $h_B$.  In particular, we only need to observe these flow lines in an $\epsilon$-neighborhood of $\eta'|_{[0,\tau_{\epsilon,B}]}$ to see if $\tau_{\epsilon,1} \leq \tau_{\epsilon,B}$; recall the discussion after the statement of Proposition~\ref{prop::ordering_local}.  Assume that we are working on the event $\tau_{\epsilon,1} \leq \tau_{\epsilon,B}$.  Then $\eta'|_{[0,\tau_{\epsilon,1}]}$ is almost surely determined by $h_B$ for the same reason.  Let $\tau_{\epsilon,1,B}$ be the first time $t$ that $\dist(\eta_{\epsilon,1}^R(t),B) \leq \epsilon$.  Then $\eta_{\epsilon,1}^R|_{[0,\tau_{\epsilon,1,B}]}$ is also almost surely determined by $h_B$, again for the same reason.  Finally, on the event that $\eta_{\epsilon,1}^R$ terminates in $\eta'([0,\tau_{\epsilon,1}])$ before time $\tau_{\epsilon,1,B}$, it is easy to see that $\eta_{\epsilon,1}'|_{[\sigma_{\epsilon,1},\infty)}$ stopped upon getting within distance $\epsilon$ of $B$ is almost surely determined by $h_B$ because it is given by the counterflow line of $h$ starting from the terminal point of $\eta_{\epsilon,1}^R$ with its excursions into $P_{\epsilon,1}$ excised.  In particular, this is the same as the counterflow line of the conditional GFF $h$ given $\eta'|_{[0,\tau_{\epsilon,1}]}$ and $\eta_{\epsilon,1}^R$ starting from the terminal point of $\eta_{\epsilon,1}^R$.   This proves Part~\eqref{it::epsilon_neighborhood_local} for $j=1$.  The result for $j \geq 2$ follows using a similar argument and induction on $j$.

Part~\eqref{it::path_does_not_enter_pockets} follows because, by our construction, after drawing a pocket we excise all of the excursions that the counterflow line makes into that pocket and the flow line interaction rules imply that a flow line of angle $\theta_1$ (i..e, one of the $\eta_{\epsilon,j}^R$) cannot cross into the interior of such a pocket.

We turn to Part~\eqref{it::epsilon_pockets_finite}.  For each $k \geq 1$, consider the path $\wt{\eta}_{\epsilon,k}'$ which is given by starting with $\eta'$ and then excising the excursions that $\eta'$ makes into the interior of each $P_{\epsilon,j}$ for $1 \leq j \leq k$.  Then each path $\wt{\eta}_{\epsilon,k}'$ is continuous and has the same range as $\eta_{\epsilon,k}'$ by the argument described after Lemma~\ref{lem::lightcone_approximation}.  In particular, the range of $\wt{\eta}_{\epsilon,k}'$ is equal to $\ol{\D} \setminus \cup_{j=1}^k P_{\epsilon,j}$.  As $k \geq 1$ increases, more and more excursions are excised in order to generate $\wt{\eta}_{\epsilon,k}'$.  Thus arguing as in the proof of Lemma~\ref{lem::continuous}, this implies that the limit $\wt{\eta}_\epsilon'$ of $\wt{\eta}_{\epsilon,k}'$ as $k \to \infty$ exists as a uniform limit of continuous paths on a compact interval and $\wt{\eta}_\epsilon'$ is continuous and non-self-crossing.  Moreover, the complement of the range of $\wt{\eta}_\epsilon'$ can only have a finite number of components of diameter larger than $\epsilon > 0$.  Indeed, for otherwise the range of $\wt{\eta}_\epsilon'$ would not be locally connected which in turn would contradict continuity.  This gives Part~\eqref{it::epsilon_pockets_finite}.

We are now going to explain the proof of Part~\eqref{it::epsilon_pockets_contained}.  We first condition $h$ on the $\epsilon$-neighborhood of $\eta_{\epsilon,j}'|_{[0,\sigma_{\epsilon,j}]}$ for some $j$.  Note that $\partial P_{\epsilon,j}$ consists of the right side of a flow line with angle $\theta_1$ and the left side of a flow line with angle $\theta_2$.  Consequently, an angle-varying flow line with angles contained in $[\theta_1,\theta_2]$ which changes angles only a finite number of times and at positive rational times cannot enter (the interior of) $P_{\epsilon,j}$ by the flow line interaction rules.  Thus if  $P_{\epsilon,j}$ for $1 \leq j \leq J_\epsilon$ is between the left and right boundaries of $\lightcone(\theta_1,\theta_2)$ then it is a subset of some element in $\CP_\epsilon(\theta_1,\theta_2)$.  This gives the first part of Part~\eqref{it::epsilon_pockets_contained}.  To establish the second part of Part~\eqref{it::epsilon_pockets_contained}, we first condition on $\lightcone(\theta_1,\theta_2)$.  Note that $\eta'$ enters the interior of a pocket of $\lightcone(\theta_1,\theta_2)$ at its opening point.  Thus, $\eta_{\epsilon,1}'(\tau_{\epsilon,1})$ must be on the boundary of such a pocket, say $P \in \CP_\epsilon(\theta_1,\theta_2)$.  Indeed, for otherwise the exploration used to generate $\eta_{\epsilon,1}'$ would have skipped following $\sideflow{1}{P}$.  Iterating this proves the claim for $j \geq 1$.

We turn to Part~\eqref{it::epsilon_pockets_lightcone}.  We fix $P \in \CP_\epsilon(\theta_1,\theta_2)$.  We claim that either $\eta_{\epsilon,1}^R = \sideflow{1}{P}$ or, if not, cannot merge into $\sideflow{1}{P}$.  To see that this is the case, we assume that $\eta_{\epsilon,1}^R$ is not equal to $\sideflow{1}{P}$.  If $\eta_{\epsilon,1}^R$ did merge into $\sideflow{1}{P}$, then $\eta'$ would visit the left side of $\sideflow{1}{P}$ before hitting $\open{P}$ because the path would have to visit the left side of $\eta_{\epsilon,1}^R$ before hitting $\open{P}$.  (This follows because whenever $\eta'$ hits the opening point of a pocket, the flow line interaction rules imply that it immediately enters and then exits at the closing point of the pocket.  Once it exits at the closing point, it immediately starts filling the $\theta_1$-angle boundary segment.)  This, in turn, would contradict the ordering because $\eta'$ would hit the left side of $\side{1}{P}$ before hitting $\open{P}$.  Iterating this argument implies that $\eta_{\epsilon,j}^R$ is either equal to $\sideflow{1}{P}$ where $P$ is the pocket of $\lightcone(\theta_1,\theta_2)$ which contains $P_{\epsilon,j}$ or does not merge with $\sideflow{1}{P}$.  Since the range of $\eta_\epsilon' = \eta_{\epsilon,J_\epsilon}'$ is equal to $\D \setminus \cup_{j=1}^{J_\epsilon} P_{\epsilon,j}$, if $\sideflow{1}{P}$ for some $P \in \CP_\epsilon(\theta_1,\theta_2)$ was not equal to one of the $\eta_{\epsilon,j}^R$ for $1 \leq j \leq J_\epsilon$, then $\eta_{\epsilon}'$ would have to visit $\open{P}$.  This is a contradiction since exploring $\sideflow{1}{P}$ upon hitting $\open{P}$ would lead to a pocket with diameter at least $\epsilon$ (since no other part of the $\theta_1$-angle boundary segment would have been explored by $\eta_{\epsilon}'$ before the path hits the opening point).  This proves that $\lightcone_\epsilon^R(\theta_1,\theta_2) \subseteq \CR_\epsilon$ almost surely.

We are now going to prove that the set which consists of those elements $\eta_{\epsilon,j}^R$ of $\CR_\epsilon$ for $1 \leq j \leq J_\epsilon$ which lie between the left and right boundaries of $\lightcone(\theta_1,\theta_2)$ is contained in $\lightcone_\epsilon^R(\theta_1,\theta_2)$ almost surely.  Fix $1 \leq j \leq J_\epsilon$.  Suppose that $P_{\epsilon,j}$ is strictly contained in the pocket $P$ of $\CP_\epsilon(\theta_1,\theta_2)$ which contains $P_{\epsilon,j}$.  Upon hitting the opening point $x_{\epsilon,j}$ of $P_{\epsilon,j}$, $\eta'$ has to enter into the interior of $P_{\epsilon,j}$ hence the interior of $P$ as explained above.  If $x_{\epsilon,j}$ is not equal to $\open{P}$, then this implies that $\eta'$ enters the interior of $P$ before hitting $\open{P}$.  This is a contradiction, therefore $P_{\epsilon,j} = P$ as desired.  This proves Part~\eqref{it::epsilon_pockets_lightcone}.
\end{proof}

\begin{proof}[Proof of Proposition~\ref{prop::ordering_local}]
As in the proof of Lemma~\ref{lem::epsilon_path_properties}, we let $\eta_\epsilon' = \eta_{\epsilon,J_\epsilon}'$.  By the construction and Part~\eqref{it::epsilon_pockets_lightcone} of Lemma~\ref{lem::epsilon_path_properties}, $\eta_\epsilon'$ visits the elements of $\CP_\epsilon(\theta_1,\theta_2)$ in the same order as $\eta$ defined just before Lemma~\ref{lem::continuous}.  Therefore it is easy to see from the construction that $\eta_\epsilon'$ with its excursions outside of the region between $\eta_1$ and $\eta_2$ converges uniformly modulo parameterization to $\eta$ as $\epsilon \to 0$.  Therefore Lemma~\ref{lem::epsilon_path_properties} implies that $\eta([0,t]) \cup \eta_1 \cup \eta_2$ is a local set for $h$ for each rational time $t$.  Combining this with the characterization of local sets given in the first part of \cite[Lemma~3.9]{SchrammShe10} implies that $\eta([0,\tau]) \cup \eta_1 \cup \eta_2$ is local for each $\eta$-stopping time $\tau$.
\end{proof}

We are now going to show that the law of the exploration path is continuous in the angles of the light cone.  This, in turn, will be used in Section~\ref{subsec::law} to establish the continuity of the law of $\SLE_\kappa(\rho)$ as the value of $\rho$ varies between $(-2-\tfrac{\kappa}{2}) \vee (\tfrac{\kappa}{2}-4)$ and $-2$.

\begin{proposition}
\label{prop::interpolation}
Suppose that $\theta_1 \leq \theta_2$ are angles with $\theta_2 - \theta_1 < \theta_c$ and $\theta_2 - \theta_1  \leq \pi$ and that $(\theta_n^1)$, $(\theta_n^2)$ are sequences of angles such that $\theta_n^1 \leq \theta_n^2$ and $\theta_n^2 - \theta_n^1 < \theta_c$ and $\theta_n^2 - \theta_n^1 \leq \pi$ for each $n \in \N$ and $\theta_n^j \to \theta_j$ as $n \to \infty$ for $j = 1,2$.  For each $n\in \N$, let $\eta_n$ be the path described above which visits the points of $\lightcone(\theta_n^1,\theta_n^2)$ and let $\eta$ be the path associated with $\lightcone(\theta_1,\theta_2)$.  Then $\eta_n \to \eta$ as $n \to \infty$ almost surely with respect to the uniform topology modulo reparameterization.
\end{proposition}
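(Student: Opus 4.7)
The plan is to combine Propositions~\ref{prop::lightcone_continuous} and~\ref{prop::pocket_boundaries_continuous}, the local finiteness of pockets from Lemma~\ref{lem::locally_finite}, and a continuity statement for the counterflow line that orders the pockets. First, I would establish convergence of the ordering counterflow line. Let $\eta'$ (resp.\ $\eta'_n$) denote the counterflow line of $h + (\theta_2 - \tfrac{\pi}{2})\chi$ (resp.\ $h + (\theta_n^2 - \tfrac{\pi}{2})\chi$) from $i$ to $-i$. Adding a constant to $h$ amounts to continuously shifting the boundary data, and hence the force-point weights, of the counterflow line; since the resulting processes are $\SLE_{\kappa'}(\ul{\rho})$ with $\kappa' = 16/\kappa > 4$, they lie in the regime treated by \cite{MS_IMAG,MS_IMAG4}, and one obtains $\eta'_n \to \eta'$ almost surely with respect to the uniform topology modulo parameterization.

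Next, fix $\epsilon > 0$. By Lemma~\ref{lem::locally_finite}, there are almost surely only finitely many pockets $P_1, \ldots, P_N$ of $\lightcone(\theta_1, \theta_2)$ with $\diam(P_k) \geq \epsilon$. For each $k$ pick an interior point $z_k \in P_k$ and let $P_k^n$ denote the pocket of $\lightcone(\theta_n^1, \theta_n^2)$ containing $z_k$. Proposition~\ref{prop::pocket_boundaries_continuous} implies that the $\theta_n^j$-angle flow lines bounding $P_k^n$ converge to $\sideflow{j}{P_k}$ uniformly modulo parameterization; consequently $\open{P_k^n} \to \open{P_k}$, $\close{P_k^n} \to \close{P_k}$, and $\partial P_k^n \to \partial P_k$ in the Hausdorff sense.

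With these in place, I would match the orderings and control the transitions between large pockets. From the convergence $\eta'_n \to \eta'$ and $\open{P_k^n} \to \open{P_k}$, for $n$ sufficiently large the visit order of $P_1^n, \ldots, P_N^n$ under $\eta'_n$ agrees with that of $P_1, \ldots, P_N$ under $\eta'$. I would choose reparameterizations of $\eta$ and $\eta_n$ so that for each $k$ both paths traverse their $k$-th large-pocket $\theta_1$-angle boundary over a common time interval; the convergence from the previous paragraph then gives that $\eta$ and $\eta_n$ are uniformly close on these intervals. Between consecutive such intervals $\eta$ explores only pockets of diameter $<\epsilon$, so by continuity of $\eta$ (Lemma~\ref{lem::continuous}) each transition piece has small diameter once $\epsilon$ is small. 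For the analogous control of $\eta_n$, I would use the Hausdorff convergence $\lightcone(\theta_n^1, \theta_n^2) \to \lightcone(\theta_1, \theta_2)$ from Proposition~\ref{prop::lightcone_continuous} to argue that, for large $n$, every pocket of $\lightcone(\theta_n^1, \theta_n^2)$ of diameter at least $2\epsilon$ coincides with one of the $P_k^n$; hence $\eta_n$ between consecutive large-pocket intervals also explores only pockets of small diameter. Sending $\epsilon \downarrow 0$ then yields the claimed convergence.

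The main obstacle I anticipate is the last step: showing uniformly in $n$ that there are no ``surprise'' large pockets of $\lightcone(\theta_n^1, \theta_n^2)$ beyond the $P_k^n$. One natural way to do so is via the bounding flow lines: any such pocket is bounded by a $\theta_n^1$- and a $\theta_n^2$-angle flow line of $h$, and by an argument in the spirit of Proposition~\ref{prop::pocket_boundaries_continuous} (comparing to the flow lines of $h$ at angles $\theta_1, \theta_2$ started from the same nearby points) these must be Hausdorff-close to the pair of boundary flow lines of some pocket of $\lightcone(\theta_1, \theta_2)$, which by diameter considerations must then be one of the $P_k$. Modulo this point, the remaining bookkeeping --- choosing the reparameterizations and matching transition intervals --- is routine.
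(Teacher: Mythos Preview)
Your overall architecture matches the paper's: converge the ordering counterflow lines $\eta_n'\to\eta'$, converge the large-pocket boundaries via Proposition~\ref{prop::pocket_boundaries_continuous}, align the parameterizations on the large-pocket intervals, and then handle the remainder. The ``surprise large pocket'' issue you flag is real and the paper also treats it tersely, essentially asserting (from Propositions~\ref{prop::lightcone_continuous} and~\ref{prop::pocket_boundaries_continuous}) that once the pockets are ordered by diameter, for $k$ large enough and $n$ large enough one has $\diam(P_j^n)\leq\epsilon$ for all $j>k$.

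There is, however, a genuine gap in your treatment of the time between consecutive large-pocket intervals. You write that ``each transition piece has small diameter once $\epsilon$ is small,'' but this is false: a transition segment of $\eta$ visits only small pockets, yet it may string together many of them and traverse a macroscopic distance across $\D$. Continuity of $\eta$ gives no diameter bound on such a segment. The same objection applies to $\eta_n$.

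The paper bypasses this by using the counterflow line as a pointwise comparison curve rather than trying to bound segment diameters. On the small-pocket time set $\CJ$, the exploration path and the counterflow line are simultaneously inside the same pocket (one drawing its $\theta_1$-side, the other running from opening to closing point), so
\[
\|\eta_n|_{\CJ}-\eta_n'|_{\CJ}\|_\infty\leq\epsilon
\quad\text{and}\quad
\|\eta|_{\CJ}-\eta'|_{\CJ}\|_\infty\leq\epsilon.
\]
Combined with $\|\eta_n'-\eta'\|_\infty\leq\epsilon$, a triangle inequality closes the estimate on $\CJ$; your large-pocket argument handles $\CI$. Replacing your diameter claim with this comparison is the missing step.
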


\begin{remark}
\label{rem::interpolation_not_continuous}
Proposition~\ref{prop::interpolation} does not imply that the map which takes a pair of angles $(\theta_1,\theta_2)$ to the exploration path of $\lightcone(\theta_1,\theta_2)$ is a continuous function into the space of paths equipped with the uniform topology modulo parameterization for a fixed realization of $h$.  This follows from the same reasoning as in Remark~\ref{rem::lightcone_not_continuous} in which it was explained that $(\theta_1,\theta_2) \mapsto \lightcone(\theta_1,\theta_2)$ is not a continuous function into the space of closed sets equipped with the Hausdorff topology for a fixed realization of $h$.  Proposition~\ref{prop::interpolation} does, however, imply that the map which takes a pair of angles $(\theta_1,\theta_2)$ to the law of the exploration path of $\lightcone(\theta_1,\theta_2)$ is continuous with respect to the weak topology.
\end{remark}

\begin{proof}[Proof of Proposition~\ref{prop::interpolation}]
For each $n \in \N$, let~$\eta_n'$ (resp.\ $\eta'$) be the counterflow line of~$h$ which orders~$\lightcone(\theta_1^n,\theta_2^n)$ (resp.\ $\lightcone(\theta_1,\theta_2)$) to generate the light cone exploration path~$\eta_n$ (resp.\ $\eta$).  Then we know that $\eta_n' \to \eta'$ almost surely as $n \to \infty$ with respect to the uniform topology.\footnote{This follows because if we fix any finite collection of points $z_1,\ldots,z_k \in \D$, the ``cells'' generated by the flow and dual flow lines corresponding to~$\eta_n'$ starting from these points will converge those of~$\eta'$ as $n \to \infty$.  If we fix enough points, then w.h.p.\ the maximal diameter of the cells will be smaller than a fixed choice of $\epsilon > 0$.  The claim follows by reparameterization~$\eta_n'$ so that it spends the same amount of time in a given cell is~$\eta'$ does.  Note that this time change converges to the identity as $n \to \infty$ since asymptotically the area of the cells converge, too.}  We also know from Proposition~\ref{prop::lightcone_continuous} that $\lightcone(\theta_n^1,\theta_n^2) \to \lightcone(\theta_1,\theta_2)$ almost surely as $n \to \infty$ with respect to the Hausdorff topology.  Fix an ordering $(r_j)$ of the points in~$\D$ with rational coordinates.  For each $n \in \N$, let~$(P_j^n)$ be the ordering of the pockets of~$\lightcone(\theta_1^n,\theta_2^n)$ according to diameter in which ties are broken according to which pocket contains the element of $(r_j)$ with the smallest index and let $(P_j)$ be the ordering of the pockets of~$\lightcone(\theta_1,\theta_2)$ defined in the same way.  For each $j,n \in \N$, we also let~$I_j^n$ (resp.\ $I_j$) be the interval of time in which~$\eta_n'$ (resp.\ $\eta'$) travels from the opening to the closing point of~$P_j^n$ (resp.\ $P_j$).  Note that~$I_j^n$ (resp.\ $I_j$) is also the interval of time in which~$\eta_n$ (resp.\ $\eta$) travels from the opening to the closing point of~$P_j^n$ (resp.\ $P_j$) along~$\side{1}(P_j^n)$ (resp.\ $\side{1}{P_j}$).  Let~$\eta_{1,j}^n = \eta_n|_{I_j^n}$ and~$\eta_{1,j} = \eta|_{I_j}$.  It follows from Proposition~\ref{prop::lightcone_continuous} and Proposition~\ref{prop::pocket_boundaries_continuous} that~$\eta_{1,j}^n \to \eta_{1,j}$ almost surely as~$n \to \infty$ with respect to the uniform topology modulo parameterization.  Combining all of the above, we can see that there exists~$k_0 \in \N$ such that for each~$k \geq k_0$ there exists~$n_0 \in \N$ such that the following is true.  We have that~$n \geq n_0$ implies that
\begin{enumerate}
\item the uniform distance modulo parameterization between~$\eta_{1,j}^n$ and~$\eta_{1,j}$ is at most~$\epsilon$ for each $1 \leq j \leq k$,
\item $\diam(P_j^n) \leq \epsilon$ for all $j > k$, and
\item $\|\eta_n' - \eta'\|_\infty \leq \epsilon$.
\end{enumerate}
Reparameterizing the time of $\eta_n'$ and $\eta_n$ so that $I_j^n = I_j$ for each $1 \leq j \leq k$, it thus follows that, after possibly reparameterizing the time of $\eta_n'$ and $\eta_n$ within each $I_j$, with $\CI = \cup_{1 \leq j \leq k} I_j$ we have that
\begin{equation}
\label{eqn::big_pocket_distance}
  \|\eta_n|_\CI - \eta|_\CI  \|_\infty \leq \epsilon.
\end{equation}
Let $\CJ = (\eta_n')^{-1}(\cup_{j > k} P_j^n) = (\eta')^{-1}(\cup_{j > k} P_j)$.  By the way that we have defined the light cone exploration path, we also have that
\begin{equation}
\label{eqn::small_pocket_distance}
 \|\eta_n'|_{\CJ} - \eta_n|_{\CJ}\|_\infty \leq \epsilon \quad\text{and}\quad \|\eta'|_{\CJ} - \eta|_{\CJ}\| \leq \epsilon.
 \end{equation}
Note that $\eta_n$ (resp.\ $\eta$) is determined by its values on $\CI \cup \CJ$ since the times in $[0,\infty) \setminus \ol{\CI \cup \CJ}$ correspond to those times in which $\eta_n'$ (resp.\ $\eta'$) makes an excursion from $\side{1}{P_j^n}$ (resp.\ $\side{1}{P_j}$) into $P_j^n$ (resp.\ $P_j$) for some $1 \leq j \leq k$.  In particular, $\eta_n$ (resp.\ $\eta$) is piecewise constant in $[0,\infty) \setminus \ol{\CI \cup \CJ}$.  Combining \eqref{eqn::big_pocket_distance} and \eqref{eqn::small_pocket_distance} implies that
\begin{align*}
          \| \eta_n - \eta\|_\infty
&\leq \| \eta_n|_{\CI} - \eta|_{\CI} \|_\infty + \| \eta_n|_{\CJ} - \eta|_{\CJ} \|_\infty\\
&\leq \epsilon + \| \eta_n|_{\CJ} - \eta_n'|_{\CJ} \|_\infty + \|\eta_n'|_{\CJ} - \eta'|_{\CJ} \|_\infty + \| \eta'|_{\CJ} - \eta|_{\CJ} \|_\infty\\
&\leq 3\epsilon + \| \eta_n' - \eta'\|_\infty \leq 4\epsilon,
\end{align*} 
which gives the desired result.
\end{proof}

\subsection{Law of the exploration path}
\label{subsec::law}

\begin{figure}[ht!]
\begin{center}
\includegraphics[scale=0.85]{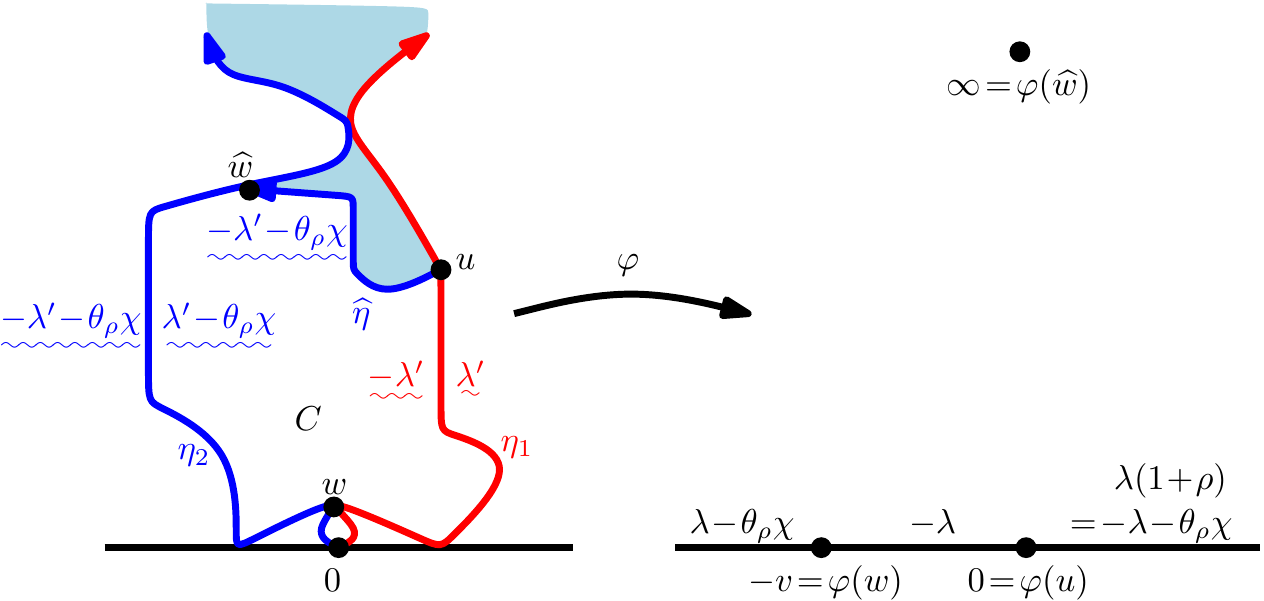}
\end{center}
\caption{\label{fig::continuum_lightcone}
Suppose that~$h$ is a GFF on~$\h$ with piecewise constant boundary data which changes values at most a finite number of times.  Shown on the left side are the flow lines $\eta_1,\eta_2$ with angles $0,\theta_\rho$, respectively, of~$h$ starting from~$0$, both of which we assume reach $\infty$ before hitting the continuation threshold, and the flow line~$\wh{\eta}$ of angle~$\theta_\rho$ starting from a point $u$ on the boundary of a component of $\h \setminus (\eta_1 \cup \eta_2)$ which is between $\eta_1$ and~$\eta_2$.  The outer boundary of $\lightcone(0,\theta_\rho)$ is given by $\eta_1 \cup \eta_2$.  The exploration path $\eta$ of $\lightcone(0,\theta_\rho)$ starts from $\infty$ and its outer boundary stopped upon hitting $u$ is equal to the union of $\wh{\eta}$ and the part of $\eta_1$ (resp.\ $\eta_2$) after it hits $u$ (resp.\ $w$).  The light blue region indicates the hull of $\eta$ stopped upon hitting $u$.  Let $C$ be the component surrounded by $\eta_1$, $\eta_2$, and $\wh{\eta}$ as shown and let $\varphi \colon C \to \h$ be the conformal map which takes $u$ to $0$, $\wh{w}$, the point where $\eta_2$ and $\wh{\eta}$ merge, to $\infty$, and $w$, the point on $\partial C$ where $\eta_1,\eta_2$ first intersect, to $-v$.  The boundary data for $\wt{h} = h \circ \varphi^{-1} - \chi \arg(\varphi^{-1})'$ is as shown on the right.  The image of the part of $\lightcone(0,\theta_\rho)$ contained in $\ol{C}$ under $\varphi$ is equal to the light cone $\lightcone_{\R_-}(0,\theta_\rho)$ of $\wt{h}$ starting from $0$ and the image of the part of $\eta$ when it is in $\ol{C}$ gives the corresponding exploration path.  Sending $v \to \infty$, $\lightcone_{\R_-}(0,\theta_\rho)$ converges to the corresponding light cone of a field whose boundary conditions are given by $-\lambda$ (resp.\ $\lambda(1+\rho)$) on $\R_-$ (resp.\ $\R_+$).
}
\end{figure}

\begin{figure}[ht!]
\begin{center}
\includegraphics[scale=0.85]{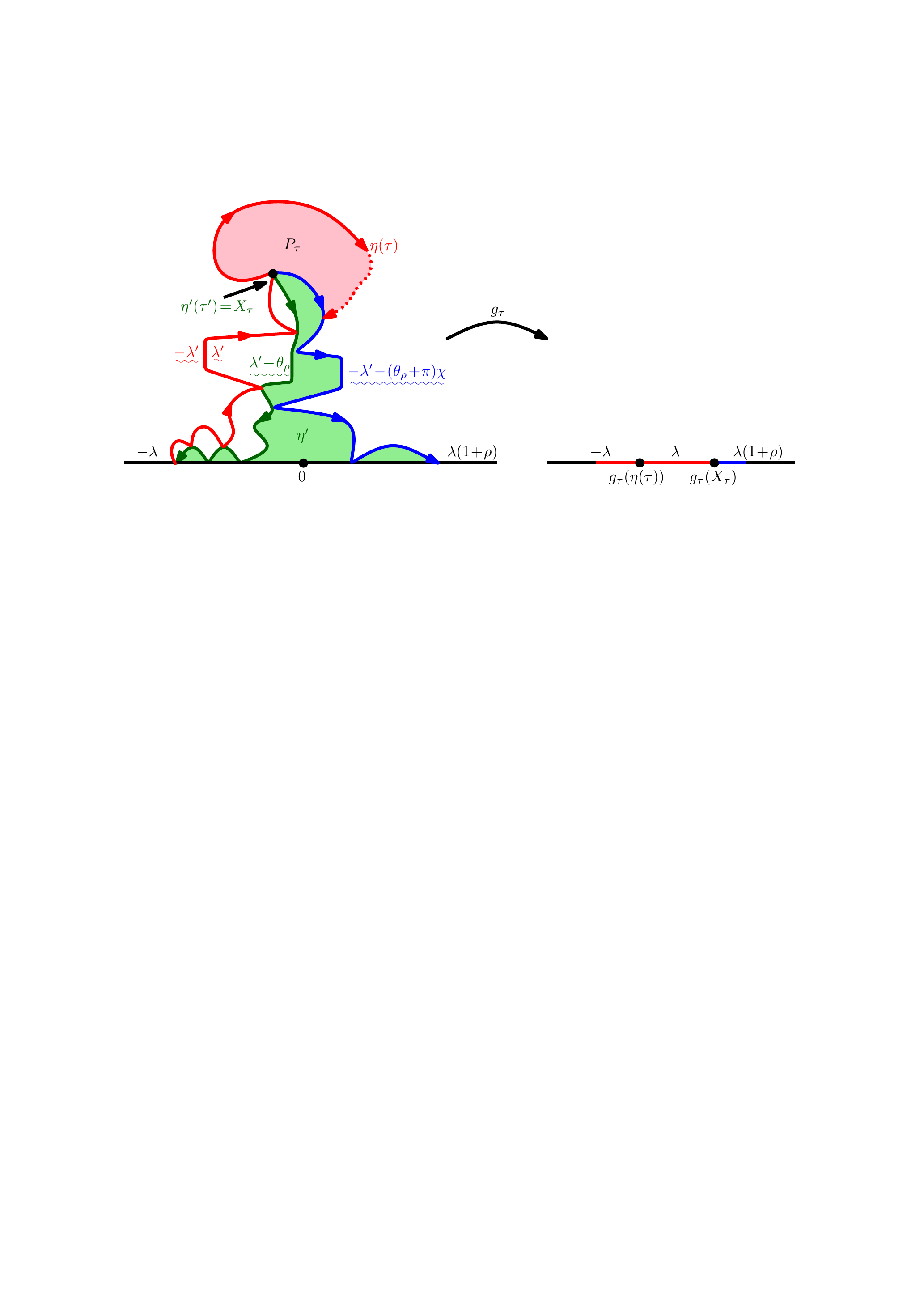}
\end{center}
\caption{\label{fig::outer_boundary} Setup for the proof of Lemma~\ref{lem::lightcone_conditional}.  Suppose that $h$ is a GFF on $\h$ with the illustrated boundary data where $\rho \in [\tfrac{\kappa}{2}-4,-2)$ and that $\eta$ is the exploration path associated with $\lightcone_{\R_-}(0,\theta_\rho)$ where $\theta_\rho = \pi(\rho+2)/(\tfrac{\kappa}{2}-2)$.  Suppose that $\tau$ is a stopping time for $\eta$.  Then we can describe the boundary behavior of the conditional law of $h$ given $\eta|_{[0,\tau]}$ restricted to the unbounded connected component of $\h \setminus \eta([0,\tau])$ by relating the outer boundary of $\eta([0,\tau])$ (the union of the red and blue paths in the illustration) to the outer boundary of the counterflow line $\eta'$ (the hull of which is indicated in light green) stopped at the first time $\tau'$ that it hits $X_\tau$, the opening point of the pocket whose boundary is being drawn by $\eta$ at time $\tau$, and the $0$-angle flow line starting from the leftmost point of $\eta'([0,\tau']) \cap \R$ (red).  The region bounded by the solid red, dashed red, and blue paths is the pocket of $\lightcone_{\R_-}(0,\theta_\rho)$ whose $0$-angle boundary is being drawn by $\eta$ at time $\tau$.}
\end{figure}

It will be more convenient for us to work on $\h$ in this section.  Throughout, we fix $\rho \in [\tfrac{\kappa}{2}-4,-2)$ and suppose that $h$ is a GFF on $\h$ with boundary conditions given by $-\lambda$ on $\R_-$ and $\lambda(1+\rho)$ on $\R_+$, as shown in Figure~\ref{fig::outer_boundary}.  Let $\theta_\rho$ be as in \eqref{eqn::lightcone_angle}.  Let $\eta'$ be the counterflow line starting from the origin whose left boundary stopped upon hitting a point $z$ is equal to the flow line with angle $\theta_\rho$ starting from $z$.  Explicitly, $\eta'$ is the counterflow line of $h+(\tfrac{\pi}{2}+\theta_\rho)\chi$ starting from the origin.  Note that this is the ``same'' as the corresponding counterflow line starting from $\infty$ because the path starting from $\infty$ will trace along $\R_+$ and does not enter (the interior of) $\h$ until hitting the origin.  Using exactly the same analysis as in Section~\ref{subsec::lightcones} and Section~\ref{subsec::explorations}, we can construct from $\eta'$ a path $\eta$ which explores $\lightcone_{\R_-}(0,\theta_\rho)$.  This path is continuous, has a continuous chordal Loewner driving function, and is almost surely determined by $h$.  Moreover, the path drawn up to any stopping time is local for $h$ (in contrast to Proposition~\ref{prop::ordering_local}, it is not necessary also to condition on the outer boundary of the light cone).  That these properties hold follows from the results of the previous subsections and the conditioning argument explained in Figure~\ref{fig::continuum_lightcone}.

We will now determine the law of $\eta$.  This, in turn, will lead to the proofs of Theorem~\ref{thm::continuous} and Theorem~\ref{thm::coupling} (it does not quite imply Theorem~\ref{thm::interpolation} because the boundary data is different for different $\rho$ values).  For each $t \geq 0$, let $K_t$ be the closure of the complement of the unbounded connected component $\h_t$ of $\h \setminus \eta([0,t])$.  For each $t \geq 0$ such that $\eta$ is drawing a segment of $\side{1}{P}$ where $P$ is a pocket of $\lightcone_{\R_-}(0,\theta_\rho)$ in an open interval of time containing $t$, let $P_t$ be the corresponding pocket and let $X_t$ be its opening point.  For other values of $t$, we take $P_t = \emptyset$ and let $X_t$ be the limit as $s \downarrow t$ where the times $s$ are restricted to those in which $\eta$ is drawing a segment of $\side{1}{P}$ for a pocket $P$ of $\lightcone_{\R_-}(0,\theta_\rho)$.  The main step in determining the law of $\eta$ is the following, which gives the conditional law of $h$ given $\eta$ drawn up to a fixed stopping time.

\begin{lemma}
\label{lem::lightcone_conditional}
Suppose that $\tau$ is an almost surely finite stopping time for $\eta$.  Then the conditional law of $h$ given $\eta|_{[0,\tau]}$ is independently that of a GFF in each of the components of $\h \setminus \eta([0,\tau])$.  The boundary conditions in each of the bounded components agrees with that of $h$ given $\lightcone_{\R_-}(0,\theta_\rho)$ in the corresponding component (recall Lemma~\ref{lem::form_pockets}).  On $\partial \h_\tau$, the boundary conditions are given by:
\begin{enumerate}[(i)]
\item\label{it::bd1} the left side of a $0$-angle flow line on the segment of $\partial \h_\tau$ which is to the left of $\eta(\tau)$ (left side of the red path in Figure~\ref{fig::outer_boundary}),
\item\label{it::bd2} the right side of a $0$-angle flow line on the right side of the segment of $\partial \h_\tau$ from $\eta(\tau)$ to $X_\tau$ (counterclockwise direction; right side of red path in Figure~\ref{fig::outer_boundary}), and
\item\label{it::bd3} the left side of a $\theta_\rho$-angle flow line on the segment from $X_\tau$ to $\R_+$ (counterclockwise direction; left side of blue path in Figure~\ref{fig::outer_boundary}).
\end{enumerate}
\end{lemma}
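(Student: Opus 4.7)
The plan is to combine the local-set property of $\eta([0,\tau])$ for $h$ (noted just before the lemma) with the imaginary-geometry description of boundary data along flow and counterflow lines. By the local-set theory of \cite{SchrammShe10}, the conditional law of $h$ given $\eta|_{[0,\tau]}$ is automatically a GFF in each complementary component, so it suffices to identify the boundary data. For a bounded component contained in a pocket $P$ of $\lightcone_{\R_-}(0,\theta_\rho)$, the desired description follows by first conditioning on the whole light cone (a function of $h$) and applying Lemma~\ref{lem::form_pockets} to $P$, and then noting that the further exploration of $P$ by $\eta$ is an imaginary-geometry construction inside $P$ that is local for the GFF on $P$ with the Lemma~\ref{lem::form_pockets} boundary data, so further conditioning on $\eta|_{[0,\tau]} \cap P$ produces GFFs with boundary data of the same form in the sub-components.

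For the unbounded component $\h_\tau$, I introduce the richer local set
\[
A_\tau \;=\; \eta'\bigl([0,\tau']\bigr) \,\cup\, \sigma \,\cup\, \gamma,
\]
where $\tau'$ is the first time the counterflow line $\eta'$ hits $X_\tau$, $\sigma$ is the $0$-angle flow line of $h$ started from $X_\tau$ and stopped upon reaching $\eta(\tau)$, and $\gamma$ is the $0$-angle flow line of $h$ emanating from $\R_-$ that forms the leftward portion of $\partial\h_\tau$ up to $\eta(\tau)$ (existence and uniqueness are guaranteed by the flow-line merging rules \cite[Theorem~1.7]{MS_IMAG4} applied to the $0$-angle flow lines that $\eta$ has drawn so far). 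Each of these three pieces is a local set for $h$ (by \cite[Theorem~1.1]{MS_IMAG} and the flow-line coupling), and hence so is their union. The boundary data of $h$ on $\partial A_\tau \cap \h_\tau$ is then read off from the imaginary-geometry coupling: by \cite[Theorem~1.4]{MS_IMAG}, the left outer boundary of $\eta'([0,\tau'])$ stopped at $X_\tau$ coincides with the $\theta_\rho$-angle flow line from $X_\tau$, which yields the left-side-of-a-$\theta_\rho$-angle flow line boundary data required for statement~(iii); the two sides of $\sigma$ yield the $0$-angle flow line boundary data required for statement~(ii); and $\gamma$ yields the left-side-of-a-$0$-angle flow line data required for statement~(i).

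The final step is to show that, on $\h_\tau$, conditioning on $\eta|_{[0,\tau]}$ produces the same conditional GFF law as conditioning on $A_\tau$. This holds because $A_\tau$ is determined by $\eta|_{[0,\tau]}$ together with auxiliary data that lives inside the already-explored bounded pockets and is conditionally independent of $h|_{\h_\tau}$ given $\eta|_{[0,\tau]}$, by iterated application of the local-set characterization \cite[Lemma~3.9]{SchrammShe10} inside each such pocket. The main obstacle will be making this $\sigma$-algebra reconciliation precise --- in particular, showing that $\h_\tau$ is the unbounded component of $\h \setminus A_\tau$ and that the internal pocket structure integrates out without disturbing the boundary data on $\partial\h_\tau$. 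I expect to handle this by working first with the finitary approximations $\eta_{\epsilon,j}'$ from Lemma~\ref{lem::epsilon_path_properties}, for which the analogous statement can be verified directly using the counterflow line's domain Markov property together with Part~\eqref{it::epsilon_neighborhood_local} of that lemma, and then passing to the limit $\epsilon \downarrow 0$ using the continuity established in Section~\ref{subsec::lightcones}.
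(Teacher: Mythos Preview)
Your overall architecture---build an auxiliary local set containing $\partial\h_\tau$ with known flow/counterflow boundary data, then transfer that data to the conditioning on $\eta|_{[0,\tau]}$---matches the paper's. The differences are in how the auxiliary set is assembled and how the transfer is justified, and on both points the paper's route is cleaner and avoids a gap in yours.

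First, your $A_\tau = \eta'([0,\tau']) \cup \sigma \cup \gamma$ is built from \emph{random} times and starting points: $\tau'$ is the first time $\eta'$ hits $X_\tau$, and $X_\tau$ is determined by $\eta|_{[0,\tau]}$, hence by the entire light-cone construction, not by $\eta'$ in its own filtration. So $\tau'$ is not an $\eta'$-stopping time, and the locality of $\eta'([0,\tau'])$ (and of $\sigma$ started from $X_\tau$) is not covered by \cite[Theorem~1.1]{MS_IMAG} as you claim. The paper sidesteps this by never stopping $\eta'$ at $\tau'$: for part~(iii) it fixes rational $t$, works on the event $\{t<\tau'\}$, and compares to $\eta'([0,t])$; for parts~(i)--(ii) it uses $A_s=\eta'([0,s])\cup\eta_s$ for rational $s$ slightly past $\tau'$, where $\eta_s$ is the $0$-angle flow line launched from the leftmost point of $\eta'([0,s])\cap\R$ (a point measurable with respect to $\eta'|_{[0,s]}$). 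These are genuine local sets by standard results.

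Second, the paper does not attempt your $\sigma$-algebra reconciliation. Instead it invokes \cite[Proposition~3.8]{MS_IMAG}, which says that if two local sets share a boundary arc, the harmonic extension of the conditional field agrees there. This replaces your entire ``final step'' (including the $\epsilon$-approximation limit) with a one-line comparison. The nontrivial work the paper does instead is to show that $\eta_s$ actually coincides with the closure $C_s$ of the $0$-angle pocket boundaries meeting the right side of $\eta'([0,s])$; this is what guarantees that the segments in (i) and (ii) really lie in $\eta_s$. Your $\gamma$ is defined only by what it is supposed to accomplish, and you would need an analogous identification to make it precise.

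Your $\epsilon$-approximation plan could in principle be pushed through, but it is doing double duty---both producing locality and reconciling the conditionings---whereas the paper's use of deterministic times plus \cite[Proposition~3.8]{MS_IMAG} separates these concerns and avoids the measurability issue entirely.
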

\begin{proof}
Let $\tau$ be any almost surely finite stopping time for $\eta$ such that $\eta(\tau)$ is contained in the interior of a $0$-angle boundary segment of a pocket of $\lightcone_{\R_-}(0,\theta_\rho)$.  It suffices to show that the conditional law of $h$ given $\eta|_{[0,\tau]}$ is as described in the statement of the proposition for stopping times $\tau$ of this form.  Indeed, we know that stopping times of this form are dense in $[0,\infty)$ by the proof of Lemma~\ref{lem::continuous} and, by Proposition~\ref{prop::ordering_local}, we know that $\eta([0,\sigma])$ is a local set for $h$ for every $\eta$-stopping time $\sigma$, so we can use the continuity result for local sets proved in \cite[Proposition~6.5]{MS_IMAG}.  The statement regarding the conditional law of $h$ restricted to the components which are surrounded by $\eta([0,\tau])$ follows from \cite[Proposition~3.8]{MS_IMAG} by comparing to $\lightcone_{\R_-}(0,\theta_\rho)$.

We are now going to describe the boundary behavior for $h$ on $\partial \h_\tau$ using \cite[Proposition~3.8]{MS_IMAG} and a construction involving $\eta'$ and some auxiliary paths.  See Figure~\ref{fig::outer_boundary} for an illustration of the setup of the proof.  Let $\tau'$ be the first time that $\eta'$ hits $X_\tau$.  It follows from the way that we constructed the ordering of $\lightcone_{\R_-}(0,\theta_\rho)$ that the left boundary of $\eta'([0,\tau'])$ is contained in $\eta([0,\tau])$ and is in fact equal to the segment of $\partial \h_\tau$ which connects $X_\tau$ to $\R_+$ in the counterclockwise direction (left side of blue path in Figure~\ref{fig::outer_boundary}).  Suppose that $t \in \Q_+$.  On the event $\{t < \tau'\}$, we can use \cite[Proposition~3.8]{MS_IMAG} to get that the boundary behavior of $h$ given $\eta|_{[0,\tau]}$ on the segment of $\partial \h_\tau$ which is to the right of $X_\tau$ and contained in $\eta'([0,t])$ is as claimed in \eqref{it::bd3}.  This proves the boundary behavior claimed in \eqref{it::bd3} because by continuity and because this holds for all $t \in \Q_+$ simultaneously almost surely.

For each $s \in \Q_+$, we let $A_s = \eta'([0,s]) \cup \eta_s$ where $\eta_s$ is the $0$-angle flow line of the conditional GFF $h$ given $\eta'|_{[0,s]}$ starting from the leftmost point of $\eta'([0,s]) \cap \R$.  Note that $\eta_s$ reflects off the right boundary of $\eta'([0,s])$.  We are now going to establish the boundary behavior claimed in \eqref{it::bd1} by showing that there almost surely exists $s \in \Q_+$ such that the segment of $\partial \h_\tau$ which is to the left of $\eta(\tau)$ is contained in $\eta_s$.  This will also give \eqref{it::bd2}.  Indeed, this suffices since we can use \cite[Proposition~3.8]{MS_IMAG} to compare the boundary behavior of $h$ given $\eta|_{[0,\tau]}$ to that of $h$ given $A_s$.

We are now going to show that $\eta_s$ is equal to the closure $C_s$ of the $0$-angle boundaries of the pockets of $\lightcone_{\R_-}(0,\theta_\rho)$ which intersect the right boundary $R_s'$ of $\eta'([0,s])$ (dark green path in Figure~\ref{fig::outer_boundary}).  We will first show that $\eta_s$ is (non-strictly) to the left of~$C_s$.  Fix a countable, dense set~$D$ in~$R_s'$.  If $z \in D$ then \cite[Theorem~1.5]{MS_IMAG} implies that~$\eta_s$ is to the left of the $0$-angle flow line of $h$ given $\eta'|_{[0,s]}$ starting from~$z$.  Since~$D$ is countable, this holds for all $z \in D$ simultaneously almost surely.  Moreover, it is easy to see that $\side{1}{P}$ for a pocket $P$ of $\lightcone_{\R_-}(0,\theta_\rho)$ which intersects $R_s'$ can be written as a limit of $0$-angle flow lines starting from points in $D$ by taking starting points contained $P \cap R_s'$ which get progressively closer to $\open{P}$.  Indeed, this follows since such a flow line will merge with $\side{1}{P}$ upon intersecting it by \cite[Theorem~1.5]{MS_IMAG}.  This proves that $\eta_s$ is (non-strictly) to the left of $C_s$.  We will next argue that $\eta_s$ is (non-strictly) to the right of (and hence equal to) $C_s$.  Indeed, the reason for this is that the flow line interaction rules imply that an angle-varying flow line with angles contained in $[0,\theta_\rho]$ cannot enter into a pocket formed by $\eta_s$ and $\eta'([0,s])$.  This proves the assertion and hence the claim that $\eta_s = C_s$.

Take $s \in \Q_+$ with $s > \tau$ such that $\eta'([0,s])$ has not hit the closing point of the pocket of $\lightcone_{\R_-}(0,\theta_\rho)$ whose opening point is given by $X_\tau$.  Note that $\eta$ visits a pocket $P$ of $\lightcone_{\R_-}(0,\theta_\rho)$ before time $\tau$ if and only if $\eta'$ visits the interior of $P$ before time $\tau'$.  Consequently, it is easy to see that the boundary segments referred to in \eqref{it::bd1} and \eqref{it::bd2} are contained in $\eta_s$.  This proves the desired result by invoking \cite[Proposition~3.8]{MS_IMAG}.
\end{proof}

Now that we have determined by the boundary behavior for the conditional law of $h$ given $\eta|_{[0,\tau]}$ up to any stopping time $\tau$, we can now give the law of $\eta$.

\begin{lemma}
\label{lem::law_of_path}
The law of $\eta$ is given by that of an $\SLE_\kappa(\rho)$ process in $\h$ from $0$ to $\infty$ where
\begin{equation}
\label{eqn::rho_theta_relation}
 \rho = \ol{\theta}_\rho \left( \frac{\kappa}{2}-2\right) - 2 \quad\text{and}\quad \ol{\theta}_\rho = \frac{\theta_\rho}{\pi}.
\end{equation}
\end{lemma}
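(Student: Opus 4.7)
The plan is to combine the conditional-law description from Lemma~\ref{lem::lightcone_conditional} with the martingale characterization of $\SLE_\kappa(\rho)$ from \cite[Theorem~2.4]{MS_IMAG}, and to invoke Proposition~\ref{prop::bessel_pv} in order to pass through the collision times $\{t: W_t = V_t\}$, which is the essential new difficulty when $\rho<-2$.

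First I would transfer the boundary description of Lemma~\ref{lem::lightcone_conditional} through the Loewner map. Let $W_\tau = \eta(\tau)$ and $V_\tau = g_\tau(X_\tau)$. The left side of a $0$-angle flow line carries field value $-\lambda$ and the right side carries $\lambda$; the left side of a $\theta_\rho$-angle flow line carries $-\lambda - \theta_\rho\chi$. Using $\chi/\lambda = (4-\kappa)/(2\pi)$ one checks that
\[
-\lambda - \theta_\rho\chi \;=\; \lambda(1+\rho),
\qquad \text{where } \rho = \ol{\theta}_\rho\Bigl(\tfrac{\kappa}{2}-2\Bigr) - 2.
\]
Consequently, after applying $g_\tau$ and the coordinate change $h \circ g_\tau^{-1} - \chi\arg(g_\tau^{-1})'$, the conditional boundary data of the field on $\h$ is exactly $-\lambda$ on $(-\infty,W_\tau]$, $\lambda$ on $(W_\tau,V_\tau]$, and $\lambda(1+\rho)$ on $(V_\tau,\infty)$, which is the form required by \cite[Theorem~2.4]{MS_IMAG}.

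Next I would apply the local-set martingale argument of \cite[Theorem~2.4]{MS_IMAG} to extract the SDE for $(W,V)$. Fixing $z_0 \in \h$ and considering the conditional mean $\E[h(z_0) \mid \eta|_{[0,\tau]}]$, the harmonic function expressed in terms of $g_\tau(z_0)$, $W_\tau$, $V_\tau$ is a continuous local martingale in $\tau$ (by the local-set property of $\eta$ established in Proposition~\ref{prop::ordering_local} and the explanation at the start of this subsection, together with \cite[Proposition~6.5]{MS_IMAG}). Matching the drift to zero forces $W$ to be a semimartingale with $d\langle W\rangle_t = \kappa\,dt$ and, on any open interval $(T_1,T_2)$ during which $W\ne V$, a drift $\rho\,dt/(W_t-V_t)$, while $V$ satisfies $dV_t = 2\,dt/(V_t-W_t)$. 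Thus on such intervals the process $X_t := (V_t-W_t)/\sqrt{\kappa}$ obeys the Bessel SDE of dimension $\delta = 1 + 2(\rho+2)/\kappa \in (0,1)$.

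The main obstacle is the extension through collision times: for $\rho < -2$ the set $\{t: W_t=V_t\}$ has positive Hausdorff measure (though zero Lebesgue measure), $X$ is not a semimartingale, and the integral $\int_0^t X_s^{-1}\,ds$ diverges, so the candidate identity $V_t = (2/\sqrt{\kappa})\PV\!\int_0^t X_s^{-1}\,ds$ must be established at the level of the principal value correction. Here I would apply Proposition~\ref{prop::bessel_pv} to the pair $(X_t, U_t)$ where $U_t := \tfrac{\sqrt{\kappa}}{2} V_t$. Hypothesis (ii) holds off the collision set by the preceding paragraph. Hypothesis (i), Brownian scaling, follows from the scale invariance of the boundary data $(-\lambda,\lambda(1+\rho))$ together with the Loewner scaling $g_{\alpha t}(\sqrt{\alpha}\,z) = \sqrt{\alpha}\,g_t(z)$, which transfers to $(W,V)$ and hence to $(X,U)$. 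Hypothesis (iv), the independence of increments of $U$ after a collision time from the past, is the domain-Markov property for $\eta$ at stopping times $\tau$ with $X_\tau = 0$, which follows from the conditional-law description of Lemma~\ref{lem::lightcone_conditional} (at a collision time $W_\tau = V_\tau$, so the boundary data collapses to $-\lambda$/$\lambda(1+\rho)$ split at a single point and the future of $\eta$ is a scaled copy of the process in the appropriate half-plane). Hypothesis (iii), $\limsup U_t = +\infty$ and $\liminf U_t = -\infty$, follows from Brownian scaling and a standard $0$-$1$ argument, using that $U$ is non-constant (since $\delta<1$ the Bessel process $X$ accumulates infinite local time and $U$ acquires both positive and negative contributions near zeros of $X$ via the PV correction recalled in Section~\ref{subsec::sle}). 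Proposition~\ref{prop::bessel_pv} then gives $U_t = \PV\!\int_0^t X_s^{-1}\,ds$, equivalently
\[
V_t \;=\; \frac{2}{\sqrt{\kappa}}\,\PV\!\int_0^t \frac{1}{X_s}\,ds,
\qquad W_t \;=\; V_t - \sqrt{\kappa}\,X_t,
\]
which is precisely the definition of the $\SLE_\kappa(\rho)$ driving pair given in Section~\ref{subsec::sle}. Since $\eta$ has a continuous Loewner driving function by Lemma~\ref{lem::continuous_loewner} and is determined by it, this identifies the law of $\eta$ as $\SLE_\kappa(\rho)$ in $\h$ from $0$ to $\infty$ with the stated value of $\rho$, completing the proof.
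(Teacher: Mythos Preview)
Your approach is the same as the paper's: combine Lemma~\ref{lem::lightcone_conditional} with \cite[Theorem~2.4]{MS_IMAG} to obtain the $\SLE_\kappa(\rho)$ dynamics on intervals where $W\neq V$, and then invoke Proposition~\ref{prop::bessel_pv} to pass through the collision set. The paper is terser---it uses Lemma~\ref{lem::continuous_loewner} to deduce that $V-W$ is instantaneously reflecting at $0$ (hence a genuine $\BES^\delta$ for all $t$) and then simply cites Proposition~\ref{prop::bessel_pv} without spelling out hypotheses~(i)--(iv)---whereas you attempt to verify them explicitly, which is a welcome addition.

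One caution: your parenthetical justification of hypothesis~(iii) appeals to the behaviour of ``the PV correction,'' but this is circular, since identifying $U$ with $\PV\int_0^t X_s^{-1}\,ds$ is exactly the conclusion Proposition~\ref{prop::bessel_pv} is meant to deliver. You need an argument for $\limsup U_t=+\infty$ and $\liminf U_t=-\infty$ that uses only what you already know about the exploration path $\eta$ and its driving pair. The scaling plus $0$--$1$ reduction you sketch does force $\limsup U_t,\liminf U_t\in\{0,\pm\infty\}$, but ruling out $0$ (especially for the $\liminf$, since $U$ is nondecreasing off the zero set of $X$) requires a separate input---for instance, that at collision times $V=W$ and the Loewner driving function $W$ of the light cone exploration attains arbitrarily large and arbitrarily negative values. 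The paper does not supply this step either, so your write-up is not less complete than the original; it is just that the place where the hand-waving occurs is more visible in yours.
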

\begin{proof}
The martingale characterization of the $\SLE_\kappa(\rho)$ processes given in \cite[Theorem~2.4]{MS_IMAG} combined with Lemma~\ref{lem::lightcone_conditional} implies that~$\eta$ evolves as an $\SLE_\kappa(\rho)$ process with the value of~$\rho$ determined by~$\theta_\rho$ as given in \eqref{eqn::rho_theta_relation} in those time intervals in which~$\eta$ is not intersecting the past of its range, i.e., those times~$t$ such that $\eta(t) \notin \eta([0,t))$.  For each $t$, let $Z_t = g_t(X_t)$.  This implies that $Z - W$ evolves as $\sqrt{\kappa}$ times a Bessel process of dimension $d(\kappa,\rho) = 1+\tfrac{2(\rho+2)}{\kappa}$ during these times.  By Lemma~\ref{lem::continuous_loewner}, we know that $\eta$ has a continuous Loewner driving function, from which it follows that $Z_t-W_t$ is instantaneously reflecting at $0$.  Therefore $Z - W$ evolves as $\sqrt{\kappa}$ times a Bessel process of dimension $d(\kappa,\rho)$ for all $t \geq 0$.  The result then follows by applying Proposition~\ref{prop::bessel_pv}.
\end{proof}

\begin{proof}[Proof of Theorem~\ref{thm::continuous} and Theorem~\ref{thm::coupling}]
By Lemma~\ref{lem::law_of_path}, we know that $\eta$ is an $\SLE_\kappa(\rho)$ process with the desired value of $\rho$ and by Lemma~\ref{lem::lightcone_conditional} we know that $\eta$ is coupled with and almost surely determined by the field as described in Theorem~\ref{thm::coupling}.
\end{proof}

Now that we have proved Theorem~\ref{thm::continuous} and Theorem~\ref{thm::coupling}, it is left to prove Theorem~\ref{thm::interpolation}.  The result does not immediately follow from Proposition~\ref{prop::interpolation} because that result describes what happens to the light cone path when we change the angles of the light cone but leave the GFF is fixed.  In the present setting, we are changing the angles of the light cone \emph{and} the boundary data of the GFF.

\begin{figure}
\begin{center}
\includegraphics[scale=0.85]{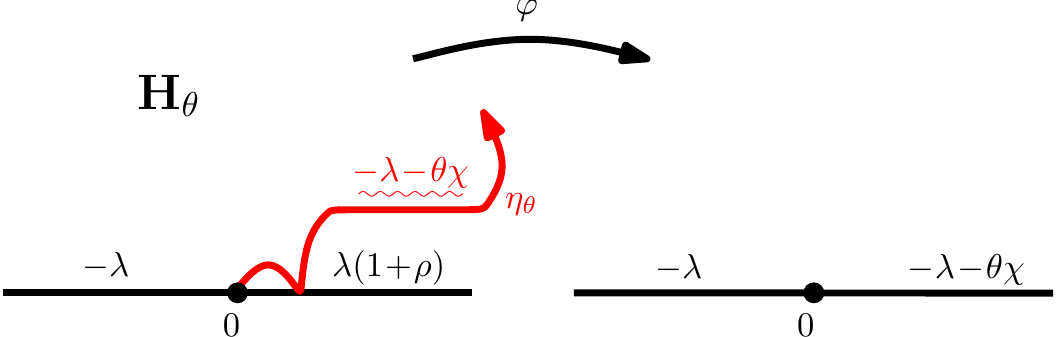}
\end{center}
\caption{\label{fig::continuous_interpolation}  Illustration of the idea of the proof of Theorem~\ref{thm::interpolation}.  Suppose that $h$ is a GFF on $\h$ with the illustrated boundary data.  Then $h$ is compatible with a coupling with an $\SLE_\kappa(\rho)$ process $\eta$ starting from $0$.  Fix $\theta > 0$ and let $\eta_\theta$ be the flow line of $h$ starting from $0$ with angle $\theta$ and let $\h_\theta$ be the component of $\h \setminus \eta_\theta$ which is to the left of $\eta_\theta$.  With $\varphi \colon \h_\theta \to \h$ a conformal transformation which fixes $0$ and $\infty$, $h \circ \varphi^{-1} - \chi \arg(\varphi^{-1})'$ is a GFF on $\h$ with the boundary data shown on the right.  Since the law of~$\eta_\theta$ is continuous in~$\theta$ and the light cone exploration path is continuous in its angles, we get the desired interpolation result for $\SLE_\kappa(\rho)$.}
\end{figure}

\begin{proof}[Proof of Theorem~\ref{thm::interpolation}]
We are going to extract the result in two steps by first applying Proposition~\ref{prop::interpolation} and then using a conditioning argument.  (This is similar in spirit to our proof of the continuity of the $\SLE_\kappa(\rho)$ processes for $\rho > -2$ given in \cite{MS_IMAG}.)  Let $\Psi  \colon \h \to \D$ be a conformal transformation with $\Psi(0) = -i$ and $\Psi(\infty) = i$.  Fix $\rho \in [\tfrac{\kappa}{2}-4,-2)$ with $\rho > -2-\tfrac{\kappa}{2}$ and suppose that $h$ is a GFF on $\h$ with boundary conditions which are given by $-\lambda$ on $\R_-$ and $\lambda(1+\rho)$ on $\R_+$.  Then~$h$ is a compatible with a coupling with an $\SLE_\kappa(\rho)$ process~$\eta$ from~$0$ to~$\infty$ as in Theorem~\ref{thm::coupling}.  Moreover, $\eta$ is equal to the light cone exploration path associated with $\lightcone_{\R_-}(0,\theta_\rho)$ where $\theta_\rho =  \tfrac{\pi(\rho+2)}{\kappa/2-2}$.  For each $\theta \geq \theta_\rho$, let~$\eta^\theta$ be the light cone path associated with $\lightcone_{\R_-}(0,\theta)$.  By Proposition~\ref{prop::interpolation}, we know that $\Psi(\eta^\theta) \to \Psi(\eta)$ uniformly (modulo reparameterization) as $\theta \downarrow \theta_\rho$.  For each $\theta \geq \theta_\rho$, we let~$\eta_\theta$ be the flow line of~$h$ with angle~$\theta$ starting from~$0$.  (For $\theta=\theta_\rho$, we take $\eta_\theta$ to be equal to $\R_+$.)  Then we know that $\eta_\theta \to \eta_{\theta_\rho}$ locally uniformly as $\theta \downarrow \theta_\rho$ almost surely.  Let $\varphi_\theta$ be the conformal transformation which takes the component $\h_\theta$ of $\h \setminus \eta_\theta$ which is to the left of $\eta_\theta$ to $\h$ fixing $0$, $-1$, and $\infty$.  Then $\Psi \circ \varphi_\theta^{-1} \circ \Psi^{-1}$ converges locally uniformly to the identity on $\D$ almost surely as $\theta \downarrow \theta_\rho$.  Note that the boundary conditions for the GFF $h_\theta = h \circ \varphi_\theta^{-1} - \chi \arg (\varphi_{\theta}^{-1})'$ are given by $-\lambda$ on $\R_-$ and by $-\lambda-\theta \chi$ on $\R_+$.  Since $\varphi_\theta(\eta^\theta)$ is the light cone path associated with the light cone with angle range $[0,\theta]$ of $h_\theta$, we know that $\varphi_\theta(\eta^\theta)$ is an $\SLE_\kappa(\rho_\theta)$ process where $\rho_\theta = \tfrac{\theta}{\pi}(\tfrac{\kappa}{2}-2)-2$.  The desired result follows since combining everything implies that $\Psi(\varphi_\theta(\eta^\theta)) \to \Psi(\eta)$ almost surely as $\theta \downarrow \theta_\rho$.  The continuity when $\theta \uparrow \theta_\rho$ is proved similarly.
\end{proof}

\section{Behavior at the boundary of the light cone regime}
\label{sec::limiting_cases}

\begin{figure}[ht!]
\begin{center}
\includegraphics[scale=0.85]{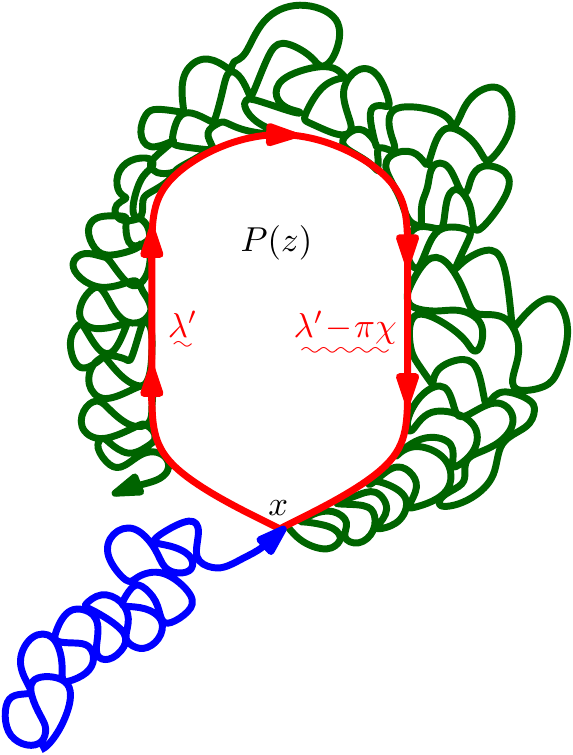}
\vspace{-0.02\textheight}
\end{center}
\caption{\label{fig::pocket_order} Illustration of the order in which an $\SLE_\kappa(\tfrac{\kappa}{2}-4)$ process $\eta$ visits the points in its range.  Shown is a pocket $\pocket{z}$ of $\eta$ with opening point $x$ and a clockwise orientation.  Note that $\partial \pocket{z}$ is given by a $0$-angle flow line loop starting from $x$.  The blue path indicates $\eta$ up until hitting $x$.  Upon hitting $x$, $\eta$ immediately traces $\partial \pocket{z}$ in the clockwise direction.  The green path indicates the range of $\eta$ after it finishes drawing $\partial \pocket{z}$.  This part of the path will crawl along $\partial \pocket{z}$ in the counterclockwise direction.  In contrast, the $\SLE_{\kappa'}(\tfrac{\kappa'}{2}-4)$ counterflow line $\eta'$ whose range is equal to $\eta$ (see Proposition~\ref{prop::lightcone_counterflow}) will draw $\partial \pocket{z}$ in the \emph{opposite} (counterclockwise) direction and, while doing so, visits the pockets in its range which intersect $\partial \pocket{z}$.}
\end{figure}

\begin{figure}[ht!]
\begin{center}
\includegraphics[scale=0.85]{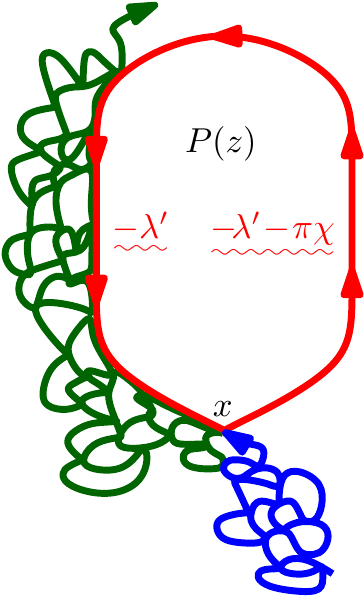}
\vspace{-0.02\textheight}
\end{center}
\caption{\label{fig::pocket_order2} (Continuation of Figure~\ref{fig::pocket_order}.) Shown is a pocket $\pocket{z}$ of $\eta$ with opening point which has a counterclockwise orientation.  Note that $\partial \pocket{z}$ is given by a $\pi$-angle flow line loop starting from $x$.  The blue path segment indicates the part of $\eta$ up until it hits $x$ and the green path segment indicates part of $\eta$ as it draws $\partial \pocket{z}$.  In contrast to the case of a clockwise loop, as considered Figure~\ref{fig::pocket_order}, $\eta$ visits the points on $\partial \pocket{z}$ in the same order as $\eta'$.  Moreover, as it does so, it draws the boundaries of the pockets which intersect $\partial \pocket{z}$.}
\end{figure}

We are now going to describe the behavior of $\SLE_\kappa(\rho)$ at the threshold $\rho = \tfrac{\kappa}{2}-4$ which lies between the light cone and trunk regimes.  When $\rho = \tfrac{\kappa}{2}-4$, the opening angle for the light cone is equal to $\pi$.  Note that $\pi < \theta_c$ if and only if $\kappa \in (2,4)$.  As we mentioned earlier, this is closely connected with the fact that an $\SLE_{\kappa'}$ process is space-filling if and only if $\kappa' \geq 8$.  In analogy with \cite[Theorem~1.4]{MS_IMAG}, in this case, the range of the path is equal to that of a form of an $\SLE_{\kappa'}$ process as stated in the following proposition.

\begin{proposition}
\label{prop::lightcone_counterflow}
Suppose that $\kappa \in (2,4)$ (so that $\pi < \theta_c$) and let $\eta$ be an $\SLE_\kappa(\tfrac{\kappa}{2}-4)$ process in $\h$ from $0$ to $\infty$ with a single force point located at $0^+$.  Then the range of $\eta$ is equal in law to that of an $\SLE_{\kappa'}(\tfrac{\kappa'}{2}-4)$ process $\eta'$ in $\h$ from $0$ to $\infty$ where the force point is located at $0^-$.
\end{proposition}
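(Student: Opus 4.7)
The plan is to combine Theorem~\ref{thm::coupling} with the realization of a light cone of opening angle $\pi$ as a counterflow line range that is implicit in the construction of Section~\ref{subsec::explorations}--Section~\ref{subsec::law} and ultimately builds on \cite[Theorem~1.4]{MS_IMAG}. At $\rho = \tfrac{\kappa}{2}-4$, the angle defined by \eqref{eqn::lightcone_angle} is $\theta_\rho = \pi$; since $\kappa \in (2,4)$ the critical angle $\theta_c = \pi\kappa/(4-\kappa)$ satisfies $\theta_c > \pi$, so the relevant light cone is non-space-filling.

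First I would take $h$ to be a GFF on $\h$ with boundary data $-\lambda$ on $\R_-$ and $\lambda(\tfrac{\kappa}{2}-3) = \lambda(1+\rho)$ on $\R_+$ for $\rho = \tfrac{\kappa}{2}-4$.  Theorem~\ref{thm::coupling} then couples $h$ with the given $\SLE_\kappa(\rho)$ process $\eta$ from $0$ to $\infty$ (force point at $0^+$) so that the range of $\eta$ agrees almost surely with $\lightcone_{\R_-}(0,\pi)$.

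Second, I would consider $\eta'$, the counterflow line of the twisted field $\wt{h} = h + (\tfrac{\pi}{2}+\theta_\rho)\chi = h + \tfrac{3\pi}{2}\chi$ from $0$ to $\infty$ --- the counterflow line used in Section~\ref{subsec::explorations} and Section~\ref{subsec::law} to generate the pocket ordering in the definition of $\eta$.  Using $\tfrac{\pi}{2}\chi = \lambda - \lambda'$, $\pi\chi = \lambda'(\tfrac{\kappa'}{2}-2)$, and $\tfrac{\kappa}{2}\lambda = 2\lambda'$ (all consequences of $\lambda = \pi/\sqrt{\kappa}$, $\chi = 2/\sqrt{\kappa} - \sqrt{\kappa}/2$, and $\kappa' = 16/\kappa$), a short computation shows that the boundary data of $\wt{h}$ equals $\lambda'(\tfrac{\kappa'}{2}-3) = \lambda'(1+(\tfrac{\kappa'}{2}-4))$ on $\R_-$ and $-\lambda'$ on $\R_+$.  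This is precisely the boundary data for which the counterflow line of $\wt{h}$ from $0$ to $\infty$ is an $\SLE_{\kappa'}(\tfrac{\kappa'}{2}-4)$ with force point at $0^-$, identifying the law of $\eta'$.

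Combining these, the range of $\eta$ agrees almost surely with $\lightcone_{\R_-}(0,\pi)$, which agrees almost surely with the range of $\eta'$, so their laws as random subsets of $\h$ coincide, giving the proposition.  The main obstacle I anticipate is this last inner equality: for $\kappa' \in (4,8)$ the path $\eta'$ is non-simple with non-trivial boundary interaction, and whereas \cite[Theorem~1.4]{MS_IMAG} expresses the range of a counterflow line as a light cone started at a single point, here we need the range to agree with the light cone started from the full segment $\R_-$.  I would handle this by adapting the pocket-structure analysis of Section~\ref{subsec::lightcones} to $\wt{h}$ (the hypothesis $\theta_2 - \theta_1 = \pi < \theta_c$ holds precisely because $\kappa > 2$), using the flow line interaction rules to show that $\wt{h}$-flow lines of angles in $[0,\pi]$ started from points of $\R_-$ remain inside the region enclosed by the outer boundaries of $\eta'$, so that every point of $\lightcone_{\R_-}(0,\pi)$ lies on $\eta'$ and vice versa.
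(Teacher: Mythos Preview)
Your proposal is correct and follows essentially the same route as the paper: couple $h$ with $\eta$ via Theorem~\ref{thm::coupling} so that the range of $\eta$ is $\lightcone_{\R_-}(0,\pi)$, take $\eta'$ to be the counterflow line of $h+\tfrac{3\pi}{2}\chi$ from $0$, check that $\eta'$ is an $\SLE_{\kappa'}(\tfrac{\kappa'}{2}-4)$ with force point at $0^-$, and conclude that the ranges coincide. The only difference is in the handling of the last step. The paper does not go through a separate pocket analysis; it simply invokes \cite[Theorem~1.13]{MS_IMAG4} to note that the left boundary of $\eta'$ stopped at any $z\in\h$ is the flow line of $h$ from $z$ with angle $\pi$, which identifies $\eta'$ as \emph{the} counterflow line used in Section~\ref{subsec::law} to build $\eta$. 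At opening angle $\pi$ the right boundary of $\eta'$ at any point is then the $0$-angle flow line, so the complementary pockets of $\eta'$ are exactly the pockets of $\lightcone_{\R_-}(0,\pi)$ and the ranges agree without further work. Your proposed argument via the flow line interaction rules would reach the same conclusion, but the ``obstacle'' you anticipate is already absorbed by the identification of $\eta'$ with the exploring counterflow line and the structure established in Sections~\ref{subsec::explorations}--\ref{subsec::law}. (One small slip: the light cone is built from flow lines of $h$, not of $\wt{h}$; since $\wt h - h$ is constant this is only a shift in angle labels and does not affect the argument.)
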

\begin{remark}
\label{rem::hits_points_differently}
We emphasize that the statement of Proposition~\ref{prop::lightcone_counterflow} is that the law of the \emph{range} of~$\eta$ is equal to the law of the \emph{range} of~$\eta'$.  As explained in Figure~\ref{fig::pocket_order} and Figure~\ref{fig::pocket_order2}, the order in which the paths visit the points in their common range is different.
\end{remark}
\begin{proof}[Proof of Proposition~\ref{prop::lightcone_counterflow}]
Suppose that~$h$ is a GFF on $\h$ with boundary data given by $-\lambda$ (resp.\ $-\lambda-\pi\chi$) on~$\R_-$ (resp.\ $\R_+$) and let $\eta$ be the $\SLE_\kappa(\tfrac{\kappa}{2}-4)$ process coupled with $h$ as the light cone path from $0$ to $\infty$ as in Theorem~\ref{thm::coupling}.  Note that
\[ -\lambda = -\lambda'-\frac{\pi \chi}{2} \quad\text{and}\quad -\lambda-\pi\chi = -\lambda' - \frac{3\pi \chi}{2}.\]
Let $\eta'$ be the counterflow line of $h+3\pi \chi/2$ starting from~$0$.  Then~$\eta'$ is an $\SLE_{\kappa'}(\tfrac{\kappa'}{2}-4)$ process where the force point is located at~$0^-$.  By \cite[Theorem~1.13]{MS_IMAG4}, we note that the left boundary of~$\eta'$ stopped upon hitting a point $z \in \h$ is equal to the flow line of~$h$ starting from $z$ with angle~$\pi$.  Consequently, it follows that the range of~$\eta'$ is equal to the range of~$\eta$.
\end{proof}

We finish by recording two immediate consequences of Proposition~\ref{prop::lightcone_counterflow}.

\begin{corollary}
\label{cor::boundary_filling}
Suppose that $\kappa \in (2,4)$ and let~$\eta$ be an $\SLE_\kappa(\tfrac{\kappa}{2}-4)$ process in~$\h$ from~$0$ to~$\infty$ with a single boundary force point located at~$0^+$.  Then~$\R_-$ is almost surely contained in the range of~$\eta$.
\end{corollary}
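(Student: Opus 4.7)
The plan is to invoke the light-cone characterization of the range furnished by Theorem~\ref{thm::coupling}, taking $\rho = \tfrac{\kappa}{2} - 4$. The hypotheses of that theorem are met: since $\kappa \in (2,4)$, one has $\tfrac{\kappa}{2}-4 \in (-2-\tfrac{\kappa}{2}, -2)$, so $\rho$ lies in the admissible range. A direct substitution in \eqref{eqn::lightcone_angle} yields
\[
\theta_\rho = \pi \cdot \frac{(\tfrac{\kappa}{2}-4)+2}{\tfrac{\kappa}{2}-2} = \pi.
\]
Consequently there is a coupling of $\eta$ with a GFF $h$ on $\h$, with boundary data $-\lambda$ on $\R_-$ and $\lambda(1+\rho)$ on $\R_+$, in which the range of $\eta$ is almost surely equal to $\lightcone_{\R_-}(0,\pi)$.

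The next step is to observe that $\R_- \subseteq \lightcone_{\R_-}(0,\pi)$ by unpacking the definition. Recall from the introduction that $\lightcone_{\R_-}(0,\pi)$ is the closure of the set of points accessible by angle-varying flow-line trajectories of $h$ started from a countable dense subset $(x_n) \subset \R_-$ (with angles in $[0,\pi]$, changing only finitely often at positive rational times). Each trajectory passes through its own starting point $x_n$, so the pre-closure set contains $\{x_n : n \in \N\}$, which is dense in $\R_-$. Taking closure therefore produces a set containing all of $\R_-$. Combined with the light-cone identity of the previous paragraph, this immediately gives $\R_- \subseteq \mathrm{range}(\eta)$ almost surely.

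I do not anticipate a significant obstacle, as both inputs are already in hand: the coupling in Theorem~\ref{thm::coupling} and the explicit definition of the light cone. The only minor subtlety is that one should confirm (via the coupling) that the flow lines of $h$ starting from boundary points in $\R_-$ are well-defined in the imaginary-geometry sense of \cite{MS_IMAG}, but this is standard given the boundary data above. An alternative route, suggested by the placement of the corollary immediately after Proposition~\ref{prop::lightcone_counterflow}, would be to identify the range of $\eta$ in law with that of an $\SLE_{\kappa'}(\tfrac{\kappa'}{2}-4)$ counterflow line with force point at $0^-$ and invoke the known boundary-filling threshold for that process; this however requires handling the critical weight $\rho' = \tfrac{\kappa'}{2}-4$ and citing an external result, whereas the light-cone argument above is entirely self-contained.
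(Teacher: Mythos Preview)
Your argument is correct and takes a genuinely different route from the paper's own proof. The paper deduces the result from Proposition~\ref{prop::lightcone_counterflow}: it identifies the range of $\eta$ in law with the range of an $\SLE_{\kappa'}(\tfrac{\kappa'}{2}-4)$ process $\eta'$ (force point at $0^-$) and then cites the known fact that $\tfrac{\kappa'}{2}-4$ is precisely the threshold at or below which the counterflow line fills the boundary, so $\R_-$ is contained in the range of $\eta'$. Your approach instead invokes Theorem~\ref{thm::coupling} at the endpoint $\rho=\tfrac{\kappa}{2}-4$ (which is admissible since the closed interval $[\tfrac{\kappa}{2}-4,-2)$ is used there and $\kappa>2$ guarantees $\tfrac{\kappa}{2}-4>-2-\tfrac{\kappa}{2}$), reads off $\theta_\rho=\pi$, and then observes that $\lightcone_{\R_-}(0,\pi)$ trivially contains a dense subset of $\R_-$ simply because the flow-line trajectories used in its definition are seeded at those points; closedness of the range then finishes it.

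Your argument is shorter and entirely self-contained within the paper, avoiding both Proposition~\ref{prop::lightcone_counterflow} and the external input on boundary-filling for $\SLE_{\kappa'}(\tfrac{\kappa'}{2}-4)$. The paper's route, on the other hand, makes explicit the $\SLE_\kappa/\SLE_{\kappa'}$ range identity that is the main theme of Section~\ref{sec::limiting_cases}, so the corollary is presented as a direct consequence of that structural result rather than of the light-cone definition. You correctly anticipated this alternative in your final paragraph; your assessment that the light-cone route is the more economical one is accurate.
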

\begin{proof}
This follows from Proposition~\ref{prop::lightcone_counterflow} and the fact that $\tfrac{\kappa'}{2}-4$ is the critical value of~$\rho$ at or below which a counterflow line is boundary filling.  In particular, with~$\eta'$ as in the statement of Proposition~\ref{prop::lightcone_counterflow}, we have that $\R_-$ is contained in the range of~$\eta'$.
\end{proof}

\begin{corollary}
\label{cor::critical_pocket_structure}
Suppose that $\kappa \in (2,4)$ and let $\eta$ be an $\SLE_\kappa(\tfrac{\kappa}{2}-4)$ process in~$\h$ from~$0$ to $\infty$ with a single boundary force point located at $0^+$ coupled with a GFF~$h$ on~$\h$ with boundary data equal to $-\lambda$ (resp.\ $-\lambda-\pi \chi$) on $\R_-$ (resp.\ $\R_+$).  If~$\eta$ separates~$z$ from $\partial \h$, then $\partial \pocket{z}$ is equal to the flow line of $h$ with angle~$0$ (resp.\ $\pi$) starting from $\open{z}$ if~$\eta$ traverses $\partial \pocket{z}$ with a clockwise (resp.\ counterclockwise) orientation.  In particular, the boundaries of the pockets of~$\eta$ have only one side.
\end{corollary}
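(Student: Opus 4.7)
The plan is to deduce the corollary from Proposition~\ref{prop::lightcone_counterflow} combined with the imaginary geometry description of counterflow lines. First I would invoke Proposition~\ref{prop::lightcone_counterflow} to identify the range of $\eta$ with that of $\eta'$, the counterflow line of $h + \tfrac{3\pi\chi}{2}$ starting from $0$, which is an $\SLE_{\kappa'}(\tfrac{\kappa'}{2}-4)$ process with force point at $0^-$. For $\kappa \in (2,4)$ we have $\kappa' \in (4,8)$, so $\eta'$ is continuous and non-space-filling, and the pocket $\pocket{z}$ of $\eta$ (with $z$ separated from $\partial \h$) is also a pocket of $\eta'$, with the same opening point $\open{z}$.

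For such a counterflow line, each pocket boundary is a Jordan curve that arises when $\eta'$ returns to a previously visited point and encloses a region. In the GFF coupling, this boundary coincides with an outer-boundary flow line of $\eta'$. The proof of Proposition~\ref{prop::lightcone_counterflow} (quoting \cite[Theorem~1.13]{MS_IMAG4}) establishes that the left outer boundary of $\eta'$ stopped at an interior point $w$ is the $\pi$-angle flow line of $h$ from $w$; the symmetric statement (equivalent to swapping $\theta_1, \theta_2$ in the setup preceding Lemma~\ref{lem::locally_finite}) gives that the right outer boundary is the $0$-angle flow line from $w$. Hence when $\eta'$ pinches off $\pocket{z}$, the boundary $\partial \pocket{z}$ is either a $\pi$-angle flow line loop based at $\open{z}$ (if $\pocket{z}$ lies on the left of $\eta'$) or a $0$-angle flow line loop based at $\open{z}$ (if on the right). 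This already yields the ``only one side'' assertion, since the two arcs $\sideflow{1}{z}$ and $\sideflow{2}{z}$ of Lemma~\ref{lem::form_pockets} now coincide as sets: they are the same loop traversed in opposite directions, consistent with the general fact that reversing the parameterization of a $0$-angle flow line produces a $\pi$-angle flow line.

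The last step is orientation matching. By Lemma~\ref{lem::form_pockets}, $\side{1}{z}$ is the clockwise boundary segment of $\pocket{z}$ from $\open{z}$ to $\close{z}$ and is the $\theta_1 = 0$ angle side, while $\side{2}{z}$ is counterclockwise and is the $\theta_2 = \pi$ angle side. By the definition of the light cone exploration path, $\eta$ enters $\pocket{z}$ at $\open{z}$ and immediately traces $\sideflow{1}{z}$ to $\close{z} = \open{z}$. Hence $\eta$ traverses $\partial \pocket{z}$ clockwise exactly when this boundary loop is realized as the $0$-angle flow line from $\open{z}$, and counterclockwise exactly when realized as the $\pi$-angle flow line; this is the content of the corollary.

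The main obstacle will be the orientation bookkeeping needed to pair the two types of pockets $\eta'$ creates (left-pinched versus right-pinched) with the directions in which $\eta$ traverses their boundaries. This matching is forced by the light cone construction of Section~\ref{subsec::explorations} (see also Figures~\ref{fig::pocket_order} and~\ref{fig::pocket_order2}); everything else is a direct consequence of Proposition~\ref{prop::lightcone_counterflow} and the imaginary geometry facts already employed in its proof.
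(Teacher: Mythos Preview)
Your approach is essentially the paper's: reduce to the counterflow line $\eta'$ via Proposition~\ref{prop::lightcone_counterflow} and then read off the pocket boundary description from \cite[Theorem~1.13]{MS_IMAG4}. The paper's proof is a single sentence doing exactly this, with the orientation matching implicit in the reference and in Figures~\ref{fig::pocket_order} and~\ref{fig::pocket_order2}.

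One caution on your orientation step: you invoke Lemma~\ref{lem::form_pockets} to say that $\side{1}{z}$ is the clockwise arc, but that lemma asserts that $\open{z}$ and $\close{z}$ are \emph{distinct}, which is exactly what fails in the boundary case $\theta_2-\theta_1=\pi$ (the corollary is precisely the statement that the pocket boundary degenerates to a single loop based at one point). So the clockwise/counterclockwise dichotomy cannot be read off from Lemma~\ref{lem::form_pockets} here; it has to come from the two types of pockets of $\eta'$ (left-pinched vs.\ right-pinched), as you correctly note is the real content. The cleanest route is the paper's: the left and right outer boundaries of $\eta'$ at any interior point are the $\pi$- and $0$-angle flow lines respectively, so a pocket pinched off on the left of $\eta'$ is bounded by a $\pi$-angle loop and one on the right by a $0$-angle loop, and the direction in which $\eta$ traces each follows from the light cone ordering (which you already cite). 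Your remark that reversing a $0$-angle flow line yields a $\pi$-angle flow line is heuristically right but not needed, and not literally a theorem you can cite; best to drop it.
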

\begin{proof}
This follows from Proposition~\ref{prop::lightcone_counterflow} since the same is true for the counterflow line~$\eta'$ (see, e.g.\ \cite[Theorem~1.13]{MS_IMAG4}).
\end{proof}

\bibliographystyle{hmralphaabbrv}
\bibliography{sle_kappa_rho}

\bigskip

\filbreak
\begingroup
\small
\parindent=0pt

\bigskip
\vtop{
\hsize=5.3in
Statistical Laboratory, DPMMS\\
University of Cambridge\\
Cambridge, UK}

\bigskip
\vtop{
\hsize=5.3in
Department of Mathematics\\
Massachusetts Institute of Technology\\
Cambridge, MA, USA } \endgroup \filbreak

\end{document}